\documentclass[11pt, a4paper, english]{smfart}

\usepackage[utf8]{inputenc}

\usepackage{amssymb,amsfonts,amsmath,amsthm,amsopn,amstext,amscd,latexsym,xy,stmaryrd,mathtools,xcolor,enumitem}
\usepackage[hidelinks]{hyperref}

\usepackage{tikz-cd}
\tikzcdset{scale cd/.style={every label/.append style={scale=#1},
    cells={nodes={scale=#1}}}}
\usepackage{tikz}
\usetikzlibrary{patterns,shapes,arrows}

\input xy
\xyoption{all}

\swapnumbers

\newtheoremstyle{numpar}
{\topsep}			
{\topsep}			
{}			
{}			
{\bfseries}			
{ }			
{5pt plus 1pt minus 1pt}			
{}			

\theoremstyle{numpar}
\newtheorem{para}[subsection]{\unskip}

\newtheoremstyle{mythm}
  {\topsep}   
  {\topsep}   
  {\itshape}  
  {0pt}       
  {\bfseries} 
  {.}         
  {5pt plus 1pt minus 1pt} 
  {}          

\theoremstyle{mythm}

\newtheorem{theorem}[subsection]{Theorem}
\newtheorem*{introthm*}{Theorem}
\newtheorem*{introcor*}{Corollary}
\newtheorem{lemma}[subsection]{Lemma}
\newtheorem{proposition}[subsection]{Proposition}

\newtheorem{corollary}[subsection]{Corollary}

\newtheoremstyle{myrmk}
  {\topsep}   
  {\topsep}   
  {}  
  {0pt}       
  {\bfseries} 
  {.}         
  {5pt plus 1pt minus 1pt} 
  {}          
  
\theoremstyle{myrmk}
\newtheorem{definition}[subsection]{Definition}

\theoremstyle{myrmk}
\newtheorem{remark}[subsection]{Remark}

\numberwithin{equation}{section}

\DeclareMathOperator{\GL}{GL}

\DeclareMathOperator{\depth}{depth}

\DeclareMathOperator{\Def}{Def}

\DeclareMathOperator{\ord}{ord}

\DeclareMathOperator{\Frob}{Frob}
\DeclareMathOperator{\Sym}{Sym}
\DeclareMathOperator{\Hom}{Hom}

\DeclareMathOperator{\Gal}{Gal}
\DeclareMathOperator{\Aut}{Aut}

\DeclareMathOperator{\im}{im}

\DeclareMathOperator{\ad}{ad}

\DeclareMathOperator{\End}{End}
\DeclareMathOperator{\tr}{tr}

\DeclareMathOperator{\CNL}{CNL}

\DeclareMathOperator{\Res}{Res}

\DeclareMathOperator{\Art}{Art}

\DeclareMathOperator{\Iw}{Iw}
\DeclareMathOperator{\rec}{rec}

\newcommand{\cO}{{\mathcal O}}

\newcommand{\cV}{{\mathcal V}}

\newcommand{\bbA}{{\mathbb A}}

\newcommand{\bbC}{{\mathbb C}}

\newcommand{\bbQ}{{\mathbb Q}}

\allowdisplaybreaks

\title[Adjoint Bloch--Kato Selmer groups]{Adjoint Bloch--Kato Selmer groups of regular algebraic automorphic Galois representations}
\author{L. A'Campo \and B. Hevesi \and J. A. Thorne \and D. Whitmore }

\begin{document}
\begin{abstract}
    We prove the vanishing of the adjoint Bloch--Kato Selmer group of the Galois representations associated to regular algebraic automorphic representations of general linear groups over CM fields. A key novelty of our work is that we impose conditions only on the $p$-adic Galois representations, and not on their associated residual representations modulo $p$. 
\end{abstract}

\maketitle
\tableofcontents
\section{Introduction}

Let $F$ be a number field, let $p$ be a prime, and let 
\[ \rho : \Gal(\overline{F} / F) \to \GL_n(\overline{\mathbb{Q}}_p) \]
be a continuous, irreducible representation that is geometric, in the sense that it is unramified at all but finitely many places, and its restriction to any decomposition group at a $p$-adic place of $F$ is de Rham. (For example, this condition is satisfied if $\rho$ is an irreducible subquotient of the $p$-adic \'etale cohomology groups of a smooth, projective algebraic variety over $F$.) 

Let $\ad \rho$ denote the adjoint representation of $\rho$ (of $\Gal(\overline{F} / F)$ on the space of $n \times n$ matrices, by conjugation). General conjectures of Fontaine--Mazur, Beilinson, and Bloch--Kato \cite{Fon95, Bei84, Blo90} predict the vanishing of the Bloch--Kato Selmer group
\[ H^1_f(F, \ad \rho) = \ker( H^1(F, \ad \rho) \to \prod_{v \nmid p} \frac{H^1(F_v, \ad \rho)}{H^1_{ur}(F_v, \ad \rho)} \times \prod_{v | p} H^1(F_v, \ad \rho \otimes_{\mathbb{Q}_p} B_{cris}) ). \]
Unconditional results of this type are of interest both as instances of an important conjecture, and because of their applications to Galois deformation theory and the Langlands programme (see e.g.\ \cite{New21, New26} for applications to symmetric power functoriality). In this paper, we establish new cases of Bloch--Kato vanishing for the Galois representations attached to cuspidal, regular algebraic automorphic representations of $\GL_n(\mathbb{A}_F)$, where $F$ is a CM number field:
\begin{introthm*}\label{introthm_main_result}
    Let $F$ be a CM number field, let $\pi$ be a cuspidal, regular algebraic automorphic representation of $\GL_n(\bbA_F)$, and let $\iota : \overline{\bbQ}_p \to \bbC$ be an isomorphism. Suppose that the following conditions are satisfied:
    \begin{enumerate}
        \item The image $r_{\pi,\iota}(G_{F(\zeta_{p^\infty})})$ is enormous, in the sense of \cite[Definition 2.23]{New23}.
        \item For all finite places $v$ of $F$, the Weil--Deligne representation $\mathrm{WD}(r_{\pi, \iota}|_{G_{F_v}})$ is pure, in the sense of \cite[\S 1]{Tay07}.
    \end{enumerate}
    Then $H^1_f(F, \ad r_{\pi, \iota}) = 0$.
\end{introthm*}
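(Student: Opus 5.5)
Our plan is to deduce the vanishing of $H^1_f(F, \ad r_{\pi,\iota})$ from an automorphy lifting theorem of ``$R = \mathbb{T}$'' type, proved with the Calegari--Geraghty version of the Taylor--Wiles--Kisin patching method over CM fields as in Newton--Thorne. The key difference from the usual setting is that we do not assume anything about the residual representation $\overline{r}_{\pi,\iota}$, which may be reducible or have small image. So we cannot work at a single lattice modulo $p$. Instead we choose a $G_F$-stable $\cO$-lattice in $r_{\pi,\iota}$ (after enlarging the coefficient field $E$). We then run the Taylor--Wiles argument at the level of $\cO/\varpi^N$ for every $N$, using the enormous image hypothesis (1) on $r_{\pi,\iota}(G_{F(\zeta_{p^\infty})})$ in characteristic $0$.

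\textbf{Step 1: Taylor--Wiles primes.} Enormousness in the sense of \cite[Definition 2.23]{New23} guarantees Taylor--Wiles primes for $\ad r \otimes \cO/\varpi^N$ up to a torsion error bounded independently of $N$. We therefore construct, for each $N$, sets $Q_N$ of Taylor--Wiles primes such that the dual Selmer groups are killed by $\varpi^{c}$, with $c$ independent of $N$. This replaces the usual vanishing of dual Selmer over the residue field with a bounded-torsion statement, and the patching argument must be adapted to tolerate this bounded defect.

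\textbf{Step 2: local deformation rings.} At $v \nmid p$, purity (hypothesis (2)) means that $\mathrm{WD}(r|_{G_{F_v}})$ defines a smooth point of the generic fibre of the local lifting ring. At $v \mid p$, regularity and the de Rham/crystalline condition place $r|_{G_{F_v}}$ at a smooth point of the potentially semistable lifting ring of fixed Hodge type. We use an ordinary or Fontaine--Laffaille-free local condition, or alternatively localise at a potentially crystalline component containing $r$.

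\textbf{Step 3: patching and the local-global comparison.} We patch the completed cohomology or the homology of locally symmetric spaces for $\GL_n/F$, via the $U(n,n)$ method and local-global compatibility results (Caraiani--Newton, A'Campo). This shows that the point $x$ of $\operatorname{Spec} R^{\mathrm{loc}}[1/p]$ corresponding to $r$ lies in the support of the patched module, and that the patched module is maximal Cohen--Macaulay of the correct codimension in a neighbourhood of $x$.

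Since all the local rings are regular at $x$ by Step 2, a dimension count in the Calegari--Geraghty framework ($\ell_0 = $ defect) shows that $R_{\mathcal S}[1/p]$ is formally smooth of dimension $0$ at $x$. This means the completed local ring at $x$ is the field $E$. The tangent space at $x$ is identified with $H^1_g$, which equals $H^1_f$ at $p$ by regularity and purity, with unramified conditions elsewhere. Hence $H^1_f(F, \ad r_{\pi,\iota}) = 0$.

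\textbf{Main obstacle.} We expect the hardest part to be Step 1 together with its integration into patching. We must control torsion uniformly in $N$ without residual irreducibility, and we must also handle the fact that the deformation functor for a residually reducible representation is not representable by a plain deformation ring. For the latter we would try working with pseudodeformations or with framed deformations of the fixed lattice, localising at the characteristic-zero point $x$ so that non-representability issues disappear after inverting $p$.
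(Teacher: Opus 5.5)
You have the overall shape right: Taylor--Wiles primes from enormous image, Calegari--Geraghty patching of complexes, and a dimension count at the characteristic-zero point $x$. But the step that carries essentially all of the difficulty is asserted rather than argued.

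\textbf{The patching step.} In Step 3 you claim that the patched complex is perfect, with homology maximal Cohen--Macaulay near $x$. Nothing in your setup makes patching work without residual hypotheses. The Taylor--Wiles primes you can produce from characteristic-zero data (Proposition \ref{prop_existence_of_TW_primes}) only give $r(\Frob_v)$ with eigenvalues distinct in $\cO$. These may be congruent modulo $\varpi$; one controls only $\sum \ord_\varpi(\alpha_{v,i}-\alpha_{v,j}) \le d$. This has three consequences:
\begin{enumerate}
\item You cannot localise the Taylor--Wiles level Hecke algebra at a maximal ideal fixing an ordering of the eigenvalues.
\item The usual quasi-isomorphism between the localised $K_0(Q)$-level complex and the level-$K$ complex is unavailable. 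One only has maps $\alpha,\beta$ with $\alpha\beta=\beta\alpha=\delta^{n!}$.
\item Homology at Taylor--Wiles level can have rank unbounded in $N$, so ultraproducts and inverse limits need not be finite, let alone perfect over $S_\infty$.
\end{enumerate}
The paper's remedy is the core of the proof. Lemma \ref{lem:filtration_generator_bound} gives boundedness in $m$ of $\delta^{n!N}H_*(M_{1,N}(m)_\bullet)$ (Lemma \ref{lem:delta_m1_bound}), so $\delta^{n!N}$ kills the $R^1\varprojlim$ term (Corollary \ref{cor:patching_r1_torsion}). One then localises at the characteristic-zero prime $\mathfrak q$ at each finite level $\mathfrak a^N$, where $\delta$ is a unit because $\ord_\varpi f(\delta_{Q_r})\le d$. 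Only afterwards does one take the limit over $N$.

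By contrast, Step 1, which you single out as the main obstacle, is essentially \cite[Corollary 2.27]{New23}. It does, however, need $H^1(L/F,\ad r_{\pi,\iota}(1))=0$ in addition to enormousness. Your Step 1 omits this, and it is one of the places purity enters, via \cite[Lemma 6.2]{kisin2003}. Likewise, the local--global compatibility needed is the unconditional one of \cite{Hev25, Aca25}; the earlier results, e.g.\ of Caraiani--Newton, assume residual irreducibility and decomposed genericity. It gives the Galois action only modulo a nilpotent ideal, and the condition at $p$ only after multiplying by $\delta_\ell^{N_2}$.

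\textbf{The final step.} That nilpotent ideal breaks your conclusion. A dimension count shows at best that the surjection from a power series ring onto $T_\infty/I_\infty$ is an isomorphism. What you need is that $\mathfrak a$ generates the maximal ideal of $T_\infty/I_\infty$, and the count does not give this. Without the nilpotent ideal you would get it from freeness of the patched homology $H$ over the regular ring. That is unavailable here, since $H$ is a module over $T_\infty$, not over $T_\infty/I_\infty$. The paper instead uses Lemma \ref{lem_patching_lemma}:
\begin{enumerate}
\item the dimension and depth count makes $H/I_\infty H$ faithful over the domain $T_\infty/I_\infty$;
\item the essential extra input is that $\mathfrak m_{T_\infty}$ annihilates $H_*(M_\bullet)\cong H_*(X^G_K,\mathcal V_\lambda^\vee)_{\mathfrak q'}$, because only $\pi$ contributes (strong multiplicity one and Franke's theorem);
\item \cite[Lemma 2.5.9]{AC23} then shows that $\mathfrak a$ and $\mathfrak p^p$ generate the same ideal of $\widehat R^p_{\mathfrak p^p}/I^p$, whence $\widehat R_{\mathfrak p}/I^{ss}=E$.
\end{enumerate}

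\textbf{Smaller points.} First, for $v\mid p$, regularity does not put $r|_{G_{F_v}}$ at a smooth point of a potentially semistable lifting ring. What is needed is genericity of $\mathrm{WD}(r|_{G_{F_v}})$, equivalently $H^1_f=H^1_g$, which comes from purity. Also, $r|_{G_{F_v}}$ need not be potentially crystalline, so that alternative is not available. Second, the paper uses no local lifting rings or framings at all. It works with pseudodeformation rings cut down by $I^{[a,b]}$. It bounds the relative tangent space by $nq-n[F^+:\mathbb Q]+1$ generators up to bounded torsion. It then uses genericity at every $v\in S$ to identify the tangent space of $\widehat R_{\mathfrak p}/I^{ss}$ with $H^1_f$. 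Your idea of sidestepping representability by localising at $x$ does not mesh with patching, which must be carried out on integral finite-level objects before any localisation.
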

We record some corollaries with easily verified hypotheses.
\begin{introcor*}
    Let $F$ be a CM number field, and let $\pi$ be a cuspidal, regular algebraic automorphic representation of $\GL_n(\bbA_F)$ that is polarizable, in the sense of \cite{Bar14}. Let $\iota : \overline{\bbQ}_p \to \bbC$ be an isomorphism. Suppose there exists a finite place $v$ of $F$ such that $\pi_v$ is a twist of the Steinberg representation. Then $H^1_f(F, \ad r_{\pi, \iota}) = 0$.
\end{introcor*}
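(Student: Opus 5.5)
We deduce the Corollary from the main Theorem by checking its two hypotheses: enormous image and purity at every finite place.

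\emph{Purity.} Since $\pi$ is polarizable and regular algebraic, its Galois representation is built from the cohomology of unitary Shimura varieties, possibly after a base change and twisting argument. Local--global compatibility up to semisimplification, together with Caraiani's results on the monodromy operator, then shows that $\mathrm{WD}(r_{\pi,\iota}|_{G_{F_v}})$ is pure for all finite $v$, including $v \mid p$. We therefore invoke the known result (Caraiani, and Barnet-Lamb--Gee--Geraghty--Taylor for the polarizable case) that $\mathrm{WD}(r_{\pi,\iota}|_{G_{F_v}})^{F\text{-ss}} \cong \rec(\pi_v \otimes |\det|^{(1-n)/2})$ and that this is pure. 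Since $\pi$ is cuspidal and polarizable, each $\pi_v$ is tempered, so the associated Weil--Deligne representation is pure.

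\emph{Enormous image.} This is the main obstacle. Let $v_0$ be the place with $\pi_{v_0}$ a twist of Steinberg. By local--global compatibility at $v_0$, the representation $r_{\pi,\iota}|_{G_{F_{v_0}}}$ is, up to twist, the Weil--Deligne representation with monodromy a single regular nilpotent Jordan block $N$. We argue as follows.

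\begin{enumerate}
\item Let $H$ be the Zariski closure of $r_{\pi,\iota}(G_F)$. Its identity component $H^\circ$ contains a regular unipotent element $\exp(N)$ (up to a finite-index issue) and acts irreducibly. Irreducibility holds because any decomposition would be preserved by $N$, which has a single Jordan block; a finite-index argument then shows $H^\circ$ itself acts irreducibly.
\item An irreducible connected reductive subgroup of $\GL_n$ containing a regular unipotent element is classified by a theorem of Saxl--Seitz and Testerman--Zalesski. Its derived group is $\mathrm{SL}_n$, $\mathrm{Sp}_n$, $\mathrm{SO}_n$, or (for $n=7$) $G_2$, the latter possibly inside $\mathrm{SO}_7$, or a principal $\mathrm{SL}_2$. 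In each case $H^\circ$ acts irreducibly on $\overline{\bbQ}_p^n$.
\item The restriction to $G_{F(\zeta_{p^\infty})}$ has the same identity component, since that extension is abelian. Enormousness in the sense of \cite[Definition 2.23]{New23} for a $p$-adic image then reduces to the following property of the Zariski closure: $H^\circ$ has no nonzero invariants on $\ad^0$, and $\ad$ is spanned by elements with a regular semisimple part admitting an eigenvalue of multiplicity one. For each group in the list, a regular semisimple element of a maximal torus acts with distinct eigenvalues on $\overline{\bbQ}_p^n$; such elements are Zariski dense, so the image contains them.
\end{enumerate}

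Having checked both hypotheses, we apply the Theorem to conclude $H^1_f(F,\ad r_{\pi,\iota})=0$. The delicate point is the precise formulation of enormousness in \cite{New23}, namely whether it is a Zariski-closure condition. I expect it is designed so that the check in step 3 suffices, but verifying the spanning condition for the principal $\mathrm{SL}_2$ and $G_2$ cases may require a short separate computation.
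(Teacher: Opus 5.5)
Your purity step is fine and is what the paper uses (polarizable implies pure). The paper then handles enormousness in one line by citing \cite[Example 2.30]{New23}. Your attempt to reprove it has a genuine gap at step 3, because you never check the condition in the definition.

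Let $G$ be the Zariski closure of $r_{\pi,\iota}(G_{F(\zeta_{p^\infty})})$. What must be shown is this: for every simple $E[H]$-submodule $U\subset \ad$, there is an $h$ in the image with $n$ distinct eigenvalues and $\tr(E[h]\cdot U)\neq 0$. Since $E[h]$ is the full diagonal algebra in an eigenbasis of $h$ and $U$ is $h$-stable, this is equivalent to $U^h\neq 0$. Producing regular semisimple elements in the image, which is all step 3 does, says nothing about $U^h$. Your reformulation (no invariants on $\ad^0$ plus a spanning condition) is not equivalent to the definition.

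The missing idea has two parts. First, if $h$ has distinct eigenvalues and lies in a maximal torus $T$ of $G^\circ$, then $\ad^h=\ad^T$. Second, if $G^\circ$ is reductive and $V|_{G^\circ}$ is irreducible, all weights of $V\otimes V^\vee$ lie in the root lattice, so every nonzero $G^\circ$-stable subspace of $\ad$ has a nonzero zero-weight space. Together these give $U^h=U^T\neq 0$.

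The inputs feeding this are also not correctly established.
\begin{itemize}
\item A single Jordan block makes $V$ indecomposable under $N$, not irreducible; for instance $\ker N$ is $N$-stable. You need the irreducibility of $r_{\pi,\iota}$ itself, which is known in this Steinberg setting. Then the Zariski closures are reductive, and the identity component acts semisimply, hence irreducibly.
\item Restricting to $G_{F(\zeta_{p^\infty})}$ is an infinite-index restriction, so the identity component can shrink; it can lose its central torus. Only the derived group of the identity component for $G_F$ is guaranteed to survive. That suffices, since it still contains the unipotent element and acts irreducibly, but your claim as stated is false.
\item $\exp(tN)$ lies in the Galois image only when $v_0\nmid p$, by Grothendieck's monodromy theorem. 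In that case inertia at $v_0$ even lies in $G_{F(\zeta_{p^\infty})}$. If $v_0\mid p$, then $N$ is an operator on $D_{\mathrm{st}}$, and a separate argument is needed to put a regular unipotent element into the Zariski closure.
\end{itemize}

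Finally, the Saxl--Seitz/Testerman--Zalesski classification is unnecessary; also, your list should contain $SO_n$ only for $n$ odd. Once $G^\circ$ is known to be reductive, apply Jacobson--Morozov to $\log u$. This gives a principal $\mathfrak{sl}_2\subset\mathrm{Lie}(G^\circ)$ under which $V\cong\Sym^{n-1}$. That yields both the irreducibility of $V|_{G^\circ}$ and a cocharacter with $n$ distinct weights. The elements of $G^\circ$ with distinct eigenvalues then form a nonempty Zariski-open set. This set meets the image, because the image intersected with $G^\circ$ is Zariski dense in $G^\circ$.

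Either supply this argument or simply cite \cite[Example 2.30]{New23}, as the paper does.
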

(For polarizable representations are known to be pure, and the `enormous image' condition follows from \cite[Example 2.30]{New23}. Note that, although $\pi$ is assumed to be polarizable, the above result refers to the whole adjoint representation over $F$, rather than just its descent to the maximal totally real subfield $F^+$ of $F$, as considered in \cite[Introduction]{New23}.) 
\begin{introcor*}
    Let $F$ be a CM number field, and let $E$ be an elliptic curve over $F$ with $\End(E_{\overline{F}}) = \mathbb{Z}$. Let $p$ be a prime, and let $\rho_{E, p} : \Gal(\overline{F} / F) \to \GL_2(\mathbb{Q}_p)$ be the representation afforded by the $p$-adic Tate module of $E$. Then for any $n \geq 1$, $H^1_f(F, \ad \Sym^n \rho_{E, p}) = 0$.
\end{introcor*}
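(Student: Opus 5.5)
The statement to prove is the second corollary: for $E/F$ an elliptic curve over a CM field with $\End(E_{\overline{F}}) = \mathbb{Z}$ and any $n \geq 1$, we have $H^1_f(F, \ad \Sym^n \rho_{E,p}) = 0$. The plan is to reduce it to the main Theorem, or rather to the first Corollary, by realising $\Sym^n \rho_{E,p}$ as $r_{\pi,\iota}$ for a suitable $\pi$. The Bloch--Kato Selmer group is insensitive to finite base change in the following sense. Let $F'/F$ be a finite Galois extension. Inflation--restriction together with $\mathbb{Q}_p$-coefficients (so the cohomology of the finite group $\Gal(F'/F)$ vanishes) shows that $H^1_f(F, \ad \rho) \to H^1_f(F', \ad \rho|_{G_{F'}})^{\Gal(F'/F)}$ is injective. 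The local conditions match, because being unramified or crystalline after restriction to a finite extension implies the same before, up to the finite-group invariants. It therefore suffices to prove vanishing over a suitable CM extension $F'/F$, which we may also take to be solvable.

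The first step is automorphy. By the potential automorphy results for symmetric powers of elliptic curves over CM fields (the work of Allen--Calegari--Caraiani--Gee--Helm--Le Hung--Newton--Scholze--Taylor--Thorne and its refinements), there is a finite Galois CM extension $F'/F$ such that $\Sym^n \rho_{E,p}|_{G_{F'}} \cong r_{\pi,\iota}$ for a cuspidal, regular algebraic automorphic representation $\pi$ of $\GL_{n+1}(\mathbb{A}_{F'})$. Regularity holds because the Hodge--Tate weights $0,1,\dots,n$ are distinct. Cuspidality follows from irreducibility of $\Sym^n \rho_{E,p}|_{G_{F'}}$. That irreducibility holds because, as $E$ has no CM, the Zariski closure of the image of $G_{F'}$ contains $\mathrm{SL}_2$. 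The representation is polarizable, since $\Sym^n \rho_{E,p}$ is essentially self-dual via the Weil pairing with multiplier a power of the cyclotomic character; so $\pi$ is polarizable.

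The second step is to find a place at which $\pi$ is a twist of Steinberg. This is the step I expect to be the main obstacle. If $E$ has potentially multiplicative reduction at some finite place $v$, then $\rho_{E,p}|_{G_{F_v}}$ is a twisted Steinberg (Tate curve) representation. Its $n$-th symmetric power has Weil--Deligne representation with a single Jordan block of size $n+1$, so $\pi_w$ is a twist of Steinberg at places $w \mid v$, and we conclude from the first Corollary over $F'$. If $E$ has potentially good reduction everywhere, this argument is unavailable. I would then apply the main Theorem directly. Purity holds at all finite places because $\pi$ is polarizable (by the local--global compatibility results cited with \cite{Bar14}). It remains to check the enormous image condition for $r_{\pi,\iota}(G_{F'(\zeta_{p^\infty})})$. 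Since $\End(E_{\overline{F}}) = \mathbb{Z}$, Serre's open image theorem (in its $p$-adic form, openness of the image in $\GL_2(\mathbb{Z}_p)$) shows that the image of $G_{F'(\zeta_{p^\infty})}$ is open in $\mathrm{SL}_2(\mathbb{Z}_p)$. Its Zariski closure is then $\mathrm{SL}_2$ acting irreducibly through $\Sym^n$ on $\overline{\mathbb{Q}}_p^{n+1}$. I would verify \cite[Definition 2.23]{New23} from this: the image contains regular semisimple elements whose eigenvalues $\alpha^n, \alpha^{n-2}, \dots, \alpha^{-n}$ are pairwise distinct, and the relevant absolute irreducibility and $H^0$ vanishing for the adjoint follow from $\mathrm{SL}_2$-representation theory. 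This is presumably analogous to \cite[Example 2.30]{New23}. This route in fact handles all cases uniformly, so the Steinberg discussion is only a convenience.

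Finally, apply the main Theorem over $F'$ to get $H^1_f(F', \ad r_{\pi,\iota}) = 0$, and descend to $F$ by the injectivity of restriction from the first paragraph. The delicate points are to ensure that $F'$ can be chosen CM and that the automorphy statement is available for every $n$. The latter is where I would rely most heavily on the literature on potential automorphy of symmetric powers over CM fields.
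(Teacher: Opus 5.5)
\textbf{Gap: the purity step.} Your outline matches the paper's route: potential automorphy of $\Sym^n\rho_{E,p}$ over a CM extension $F'/F$ by \cite{10author}, injectivity of restriction with characteristic-$0$ coefficients, enormous image from Serre's open image theorem, then the main theorem. The problem is how you obtain purity.

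Over a CM field, ``polarizable'' in the sense of \cite{Bar14} means essentially \emph{conjugate} self-dual: $\pi^c \cong \pi^\vee \otimes (\chi \circ N_{F'/F'^+} \circ \det)$, i.e.\ $r_{\pi,\iota}^c \cong r_{\pi,\iota}^\vee \otimes \chi'$. The Weil pairing only gives $\rho_{E,p}^\vee \cong \rho_{E,p}(-1)$, which is essential self-duality. Combining the two, polarizability of $\Sym^n \rho_{E,p}|_{G_{F'}}$ would force $\Sym^n \rho_{E^c,p} \cong \Sym^n \rho_{E,p} \otimes \psi$. With open image, this essentially makes $E^c$ and $E$ geometrically isogenous, which fails for a general non-CM $E/F$ (e.g.\ one not coming from $F^+$ up to isogeny and twist).

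So in general $\pi$ is \emph{not} polarizable. Your Steinberg branch, which invokes the first corollary, is therefore unavailable. More importantly, you cannot get purity of $r_{\pi,\iota}$ from \cite{Bar14}. For non-polarizable $\pi$ over a CM field, purity is not known from the automorphic side, and this non-polarizable case is exactly what the corollary adds.

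\textbf{The fix, as in the paper.} Take purity from the geometry of $E$. For every finite place $v$ of $F$, $\mathrm{WD}(\rho_{E,p}|_{G_{F_v}})$ is pure of weight $1$, including $v \mid p$: weight--monodromy is known for elliptic curves (potentially good reduction via the Weil bound, resp.\ Katz--Messing for the crystalline Frobenius; potentially multiplicative reduction via the Tate curve). Purity is preserved under tensor products, hence under $\Sym^n$ (a direct summand of the $n$-th tensor power). It is also preserved under restriction to $G_{F'_w}$. This gives hypothesis (2) of the main theorem for $r_{\pi,\iota} \cong \Sym^n \rho_{E,p}|_{G_{F'}}$, and the rest of your argument goes through.

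\textbf{Two minor points.} First, you do not need $F'/F$ to be solvable, and potential automorphy does not provide this. Injectivity of restriction on $H^1$ with $\mathbb{Q}_p$-coefficients holds for any finite extension, and restriction preserves the local conditions. Second, for enormous image the paper simply cites \cite[Lemma 2.28]{New23}. If you check it by hand, the relevant point is not absolute irreducibility or $H^0$-vanishing. It is that each constituent $\Sym^{2k}$ of $\ad \Sym^n$ contains a nonzero zero-weight vector, which is a diagonal matrix. Hence $\tr(E[h] \cdot U) \neq 0$ for $h$ regular semisimple in a split torus of the image.
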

(For $\rho_{E, p}$ is known to be pure, and this property is preserved under tensor products; its symmetric powers are potentially automorphic, by \cite{10author}; and the enormous image condition follows from Serre's open image theorem.) 

The above theorem is stated below as Corollary \ref{cor_selmer_vanishing_under_purity}. Our most general result is Theorem \ref{thm_general_vanishing_result}, which however holds under slightly more technical conditions; we invite the reader to look ahead before continuing. 

We now compare the above theorem to other results in the literature. Existing results of a similar nature can be separated into two classes, the `Taylor--Wiles' or `defect 0' case, and the `Calegari--Geraghty' or `positive defect' case. Since the Bloch--Kato Selmer group $H^1_f$ can be identified with the Zariski tangent space of a suitable Galois deformation ring at the point corresponding to $\rho$, its vanishing often follows from suitable $R = \mathbb{T}$ (or $R[1/p] = \mathbb{T}[1/p]$) theorems, showing that a Galois deformation ring may be identified with a Hecke algebra whose $\mathbb{Q}_p$-fibre is known to be \'etale. Similar ideas can sometimes be used to prove the vanishing of $H^1_f$ even in cases where it is not possible to prove an $R = \mathbb{T}$ theorem for the associated residual representation $\overline{\rho}$. 

This was the approach taken in \cite{kisin2003}, which features the requirement that $\overline{\rho}|_{G_{F(\zeta_p)}}$ is non-trivial. The key difficulty in removing this condition is that, without it, one loses control of the dimension of the spaces of automorphic forms that appear when introducing auxiliary Taylor--Wiles places. Pan \cite{Pan22} showed how to use ultrafilters to circumvent this difficulty, and the works \cite{New23, Tho22} pursued Pan's approach to establish the analogue of the above theorem for the `self-dual' part of the Bloch--Kato Selmer group in the polarizable case with essentially no condition other than irreducibility (noting that purity is known in this case). 

To establish analogous results in the `positive defect' setting, one wants to mesh the ideas of the previous paragraph with the approach given by Calegari--Geraghty \cite{Cal18}, where one of the key difficulties is that one has to deal with cohomology groups that have a significant amount of torsion; and ultimately, to deal with complexes, defined only up to isomorphism in the derived category, instead of simply cohomology groups. There are technical hurdles both in terms of foundations, and in terms of the techniques to be applied. 

To get started, it is necessary to know some form of local-global compatibility. Results of this type have been established in the case we consider in e.g. \cite{10author, CN25, Hev24}, but only under the `decomposed generic' condition introduced in \cite{CS17} in order to simplify the analysis of the decomposition of cohomology with respect to the Hodge--Tate period map, and under an irreducibility condition on $\overline{\rho}$ that effectively annihilates the contribution of the boundary to the cohomology of the $\GL_{n, F}$-locally symmetric spaces. The theorems we prove here are made accessible by our companion papers, which prove unconditional local-global compatibility results in the level of generality required (see \cite{Hev25} for the case $\ell \neq p$ and \cite{Aca25} for the case $\ell = p$). 

Aside from these difficulties, one has to contend with the fact that the presence of complexes (as opposed to modules) in the patching argument means that at present we can prove `$R = \mathbb{T}$'-type results only in a rather weak form, for example up to a nilpotent ideal; and that without assuming that the residual representation satisfies the usual Taylor--Wiles condition, it is far from obvious how to get a handle on the properties of the patched complexes (for example, to know that they are ultimately finitely generated modules over a suitable ring of diamond operators).

Proving $R = \mathbb{T}$ up to a nilpotent error term is insignificant for automorphy results, which refer only to geometric points of $R$, but clearly problematic for an analysis of Zariski tangent spaces. A technique for dealing with this possible nilpotent kernel was introduced in \cite{AC23}, making use of the smoothness of the $\mathbb{Q}_p$-fibre of the local deformation rings at generic points. (This genericity condition is the source of the genericity hypothesis on $\rho$ in Theorem \ref{thm_general_vanishing_result} below.) We paraphrase the essential part of this argument as Lemma \ref{lem_patching_lemma}. 

The main technical difficulty facing us here is therefore to explain how to carry out the ultra-patching process for complexes, whose cohomology groups may have unbounded rank, while also being able to apply the ideas of \cite{AC23}. The argument we give along these lines, in \S \ref{sec_patching}, is the main novelty of this paper.  The details of this argument are quite intricate, so we give an informal sketch here.

The simple Lemma \ref{lem:filtration_generator_bound} allows us to control the finiteness properties of spaces $\delta_{Q_N}^{r} H_{Q_N}$, where $H_{Q_N}$ is a space of modular forms (or cohomology classes) at auxiliary Taylor--Wiles level, and $\delta_{Q_N}$ is a `discriminant' Hecke operator of the type introduced in \cite[\S3]{New23}, measuring the difference between the modular forms at spherical and Iwahori level at the Taylor--Wiles places. Localising the modular forms at the characteristic 0 prime $\mathfrak{q}$ of the Hecke algebra corresponding to our automorphic representation would kill the difference between $H$ and $H_{Q_N}$, which would not be useful. Instead, we perform the patching argument in two steps, first replacing $H_{Q_N}$, free over a ring like $\mathbb{Z}_p[x] / (x^{p^r} - 1)$, with a patched object $\tilde{H}_r$, free over a ring like $\mathbb{Z}_p[x] / (x^{p^r})$. This has the effect that the localisation 
\[ \delta^r \widetilde{H}_{r, \mathfrak{q}} = \widetilde{H}_{r, \mathfrak{q}} \]
is now free over $\mathbb{Q}_p[x] / (x^{p^r})$, and represents a non-trivial enlargement of the initial space $H_{\mathfrak{q}}$ of characteristic 0 modular forms. (Note in particular that $\delta$ is non-zero modulo $\mathfrak{q}$, so localisation allows us to remove it from the equation.) We can then pass to the limit to recover the usual patching setup of a space of patched modular forms, free over a regular local ring like $\mathbb{Q}_p\llbracket x \rrbracket$, at which point the argument starts to follow more familiar lines. 

Finally, we mention that there are other `positive defect' settings of interest that we do not address in this paper, such as that of the Galois representations associated to modular abelian surfaces. Unconditional vanishing results for such representations are proved in \cite{Cal20, Box25}, although it remains an interesting open question to establish results with conditions only on $\rho$ (and not on $\overline{\rho}$).

\subsection*{Acknowledgements}

We thank Toby Gee for useful comments on an earlier version of this manuscript.

B.H.'s work was supported by the Engineering and Physical Sciences Research Council grant number EP/Y030648/1 and by the European Union’s Horizon 2020 research and innovation programme under the Marie Skłodowska-Curie grant agreement No 101034255.

J.T.'s work was funded by the European Union (ERC CoG-101169866). Views and opinions expressed are however those of the author(s) only and do not necessarily reflect those of the European Union or the European Research Council. Neither the European Union nor the granting authority can be held responsible for them.

\subsection*{Notation}

If $F$ is a field of characteristic zero, we generally fix an algebraic closure $\overline{F} / F$ and write $G_F$ for the absolute Galois group of $F$ with respect to this choice. If $F$ is a number field, then we will also fix embeddings $\overline{F} \to \overline{F}_v$ extending the map $F\to F_v$ for each place $v$ of $F$; this choice determines a homomorphism $G_{F_v} \to G_F$. When $v$ is a finite place, we will write $\cO_{F_v} \subset F_v$ for the valuation ring, $\varpi_v \in \cO_{F_v}$ for a fixed choice of uniformizer, $\Frob_v \in G_{F_v}$ for a fixed choice of Frobenius lift, $k(v) = \cO_{F_v} / (\varpi_v)$ for the residue field, and $q_v = \# k(v)$ for the cardinality of the residue field. If $S$ is a finite set of finite places of $F$ then we write $F_S / F$ for the maximal subextension of $\overline{F}$ unramified outside $S$ and $G_{F, S} = \Gal(F_S / F)$.

By definition, a number field $F$ is a CM field if it is totally complex and there exists an involution $c \in \Aut(F)$ such that for any embedding $\tau : F \to \mathbb{C}$, we have $\tau(c(z)) = \overline{\tau(z)}$ for all $z \in F$. If $F$ is a CM field, then we write $F^+ = F^{c=1}$ for its maximal totally real subfield. 

If $p$ is a prime, then we call a coefficient field a finite extension $E / \bbQ_p$ contained inside our fixed algebraic closure $\overline{\bbQ}_p$, and write $\cO$ for the valuation ring of $E$, $\varpi \in \cO$ for a fixed choice of uniformizer, and $k = \cO / (\varpi)$ for the residue field. 

Let $K$ be a non-archimedean characteristic $0$ local field, and let $\Omega$ 
be an algebraically 
closed field of characteristic $0$. We write $W_K \subset G_K$ for the Weil group of $K$ and $I_K \subset W_K$ for the inertia subgroup. We use the cohomological normalisation of 
class field theory: it is the isomorphism $\Art_K: K^\times \to W_K^{ab}$ which 
sends uniformizers to geometric Frobenius elements. We use the Tate 
normalisation of the local Langlands 
	correspondence 
for $\GL_n$: it is the bijection $\rec_K^T$ between isomorphism classes of irreducible, 
admissible $\Omega[\GL_n(K)]$-modules and isomorphism classes of Frobenius-semisimple Weil--Deligne 
representations over $\Omega$ of rank $n$ which is normalised as in \cite[\S 
2.1]{Clo14}. 

If $F$ is a number field and $\pi$ is an automorphic representation of 
$\GL_n(\bbA_F)$, we say that $\pi$ is regular algebraic if $\pi_\infty$ has the 
same infinitesimal character as an irreducible algebraic representation of 
$\Res_{F/\bbQ}\GL_n$. If $F$ is a CM field and $\pi$ is a regular algebraic, cuspidal, automorphic 
representation of $\GL_n(\bbA_F)$, then for any isomorphism $\iota: \overline{\mathbb{Q}}_p \to \mathbb{C}$ there is an 
associated Galois representation $r_{\pi, \iota} : G_F \to \GL_n(\overline{\mathbb{Q}}_p)$ (see e.g.\ \cite{Har16}). 

\section{Preliminaries in commutative algebra}
\begin{lemma}\label{lem_submodule_of_O_module}
    Let $E$ be a coefficient field with valuation ring $\cO$, and let $M$ be an $\cO$-module that can be generated by $r \geq 0$ elements. Then any $\cO$-submodule of $M$ can also be generated by $r$ elements.
\end{lemma}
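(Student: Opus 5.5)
The natural approach is to reduce to the structure theory of modules over a principal ideal domain, taking care that $M$ need not be finitely generated over anything nicer than $\cO$ itself. Since $M$ is generated by $r$ elements, we have a surjection $\phi : \cO^r \to M$. Given an $\cO$-submodule $N \subseteq M$, its preimage $N' = \phi^{-1}(N) \subseteq \cO^r$ surjects onto $N$. It therefore suffices to show that $N'$ can be generated by $r$ elements, since the images of those generators will generate $N$.

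The key step is the standard fact that a submodule of a free module of rank $r$ over a principal ideal domain is free of rank at most $r$. Here $\cO$ is a discrete valuation ring, hence a PID. I would prove this directly by induction on $r$. For $r = 0$ there is nothing to show. For $r = 1$, a submodule of $\cO$ is an ideal, hence principal, so it is generated by one element. For the inductive step, consider the projection $\cO^r \to \cO$ onto the last coordinate. The image of $N'$ is an ideal $(a)$, and its kernel is $N' \cap \cO^{r-1}$. By induction, $N' \cap \cO^{r-1}$ is generated by $r-1$ elements. If $a \neq 0$, choose $n \in N'$ mapping to $a$. Then every element of $N'$ differs from an $\cO$-multiple of $n$ by an element of the kernel, so $N'$ is generated by $r$ elements. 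If $a = 0$, then $N'$ lies in $\cO^{r-1}$ and we are already done.

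There is no real obstacle. The only point needing care is that $M$ itself may have torsion, which is why we pass to the preimage in $\cO^r$ rather than working in $M$ directly. Alternatively, one could invoke the structure theorem $M \cong \cO^a \oplus \bigoplus_i \cO/(\varpi^{e_i})$ with the number of summands at most $r$, and bound the number of generators of $N$ by $\dim_k N/\varpi N$. However, the lifting argument above is shorter and avoids Nakayama-type considerations.
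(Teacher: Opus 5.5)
Your proof is correct, but it takes a different route from the paper's.

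The paper works directly with $M$ and $N$. It uses the structure theorem to express the minimal number of generators of a finite $\cO$-module as $\dim_E M[1/p] + \dim_k M[\varpi]$. It then observes that both terms are monotone under inclusion, since $(-)[1/p]$ is exact and $(-)[\varpi]$ is left exact. This choice of invariants sidesteps the fact that $\dim_k M/\varpi M$, the more obvious count, is not visibly monotone under passage to submodules.

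You instead pull $N$ back to its preimage in $\cO^r$. You then show by induction on $r$ that every submodule of $\cO^r$ needs at most $r$ generators, using only that ideals of $\cO$ are principal.

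Your route is more elementary and more general. It works verbatim over any PID, and it proves finite generation of $N$ rather than importing it from Noetherianity before applying a generator-count formula. The paper's route is shorter: two lines, once the structure theorem and the formula for the minimal number of generators are taken as known.
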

\begin{proof}
    The minimal number of generators of $M$ is $a + b$, where $a = \dim_E M[1/p]$ and $b = \dim_k M[\varpi]$. If $N \leq M$ is an $\cO$-submodule, then $N[1/p] \leq M[1/p]$ and $N[\varpi] \leq M[\varpi]$, so the result follows from the inequality $a + b \leq r$.
\end{proof}
\begin{lemma}\label{lem_product_of_flat_is_flat}
    Let $R$ be a Noetherian ring, and let $(M_i)_{i \in I}$ be an indexed list of flat $R$-modules. Then $\prod_{i \in I} M_i$ is a flat $R$-module.
\end{lemma}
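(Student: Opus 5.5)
We use the standard criterion that an $R$-module $M$ is flat if and only if, for every finitely generated ideal $J \subseteq R$, the multiplication map $J \otimes_R M \to M$ is injective. Since $R$ is Noetherian, every ideal is finitely generated, so it suffices to check this for all ideals $J$. Equivalently, by the equational criterion, it suffices to show that $\operatorname{Tor}_1^R(R/J, \prod_i M_i) = 0$ for every ideal $J$.

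The key observation is that $R/J$ is finitely presented when $R$ is Noetherian. We fix a finite presentation
\[ R^m \xrightarrow{A} R^n \to R/J \to 0, \]
and extend it to a resolution $\cdots \to R^{l} \xrightarrow{B} R^m \xrightarrow{A} R^n \to R/J \to 0$ by finite free modules. This is possible because submodules of finite free modules over a Noetherian ring are finitely generated.

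For a finite free module $R^k$ and any family $(M_i)$, the canonical map $R^k \otimes_R \prod_i M_i \to \prod_i (R^k \otimes_R M_i)$ is an isomorphism, since both sides are identified with $(\prod_i M_i)^k$. These identifications commute with the maps induced by the matrices $A$ and $B$. Hence tensoring the resolution with $\prod_i M_i$ yields the complex
\[ \textstyle\prod_i (M_i^{l}) \to \prod_i(M_i^m) \to \prod_i(M_i^n), \]
which is the product over $i$ of the complexes $M_i^l \to M_i^m \to M_i^n$. Products of abelian groups are exact, so homology commutes with products. Therefore
\[ \textstyle\operatorname{Tor}_1^R(R/J, \prod_i M_i) \cong \prod_i \operatorname{Tor}_1^R(R/J, M_i), \]
and the right-hand side vanishes because each $M_i$ is flat. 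This proves that $\prod_i M_i$ is flat.

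The only point requiring care is that the finite free resolution must be finite in each of the relevant degrees. This is exactly where the Noetherian hypothesis enters: it guarantees that $R/J$ admits a resolution by finite free modules in degrees $0$, $1$ and $2$, so that tensor products commute with the infinite product. Everything else is formal.
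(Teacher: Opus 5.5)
Your proof is correct and uses the same argument as the Stacks Project result the paper cites (Chase's theorem). That argument is: for finitely presented $N$ the map $N \otimes_R \prod_i M_i \to \prod_i (N \otimes_R M_i)$ is an isomorphism, and products are exact. The only difference is that the Stacks proof applies this directly to the finitely presented ideal $J$ in the map $J \otimes_R \prod_i M_i \to \prod_i M_i$, so no resolution of $R/J$ and no degree-$2$ term are needed. One minor slip: the $\operatorname{Tor}_1^R(R/J,-)$-vanishing criterion you invoke is the ideal criterion, not the equational criterion, but this does not affect the argument.
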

\begin{proof}
    See \cite[\href{https://stacks.math.columbia.edu/tag/05CZ}{Proposition 05CZ}]{stacks-project}. 
\end{proof}
\begin{lemma}\label{lem_inverse_limit_of_flat_is_flat}
    Let $R$ be a complete Noetherian local ring. Then:
    \begin{enumerate}
        \item Suppose that $R$ is Artinian, and that $(M_m)_{m \geq 1}$ is a projective system of flat $R$-modules with surjective transition maps. Then $M = \varprojlim M_m$ is a flat $R$-module, and for any ideal $J \leq R$, $M / J M \cong \varprojlim M_m / J M_m$. 
        \item Let $I \leq R$ be an ideal, and let $(M_m)_{m \geq 1}$ be a projective system of $R$-modules such that $M_m$ is a flat $R / I^m$-module, and the transition maps induce isomorphisms $M_{m+1} / I^m M_{m+1} \cong M_m$. Then $M = \varprojlim_m M_m$ is a flat $R$-module, and $M / I^m M \cong M_m$ for any $m \geq 1$. Moreover, for any ideal $J \leq R$, $M / J M \cong \varprojlim M_m / J M_m$. 
    \end{enumerate}
\end{lemma}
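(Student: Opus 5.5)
For part (1) I will use the local criterion for flatness over an Artinian local ring: an $R$-module $N$ is flat if and only if $\operatorname{Tor}_1^R(k, N) = 0$, where $k$ is the residue field. This holds for arbitrary modules, because the maximal ideal $\mathfrak m$ is nilpotent. Since $R$ is Artinian, it has finite length, and the residue field $k$ has a finite free resolution only in good cases. So I will work instead with the equivalent condition that $J \otimes_R N \to N$ is injective for every ideal $J$. Choose a finite presentation $R^a \xrightarrow{A} R^b \to J \to 0$. Injectivity of $J\otimes N \to N$ then says: the complex $N^a \to N^b \to N$ (given by $A$ and the generators of $J$) is exact at $N^b$. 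For each $M_m$ this exactness holds because $M_m$ is flat.

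The key point is that exactness of such a three-term complex passes to the inverse limit when the systems are Mittag-Leffler. Let $K_m = \ker(M_m^b \to M_m)$. Flatness gives $K_m = \operatorname{im}(M_m^a \to M_m^b)$. The system $(M_m^a)$ has surjective transition maps, so the system $(K_m)$ also has surjective transition maps, since it is the image of that system. Hence $\varprojlim$ is exact on $0 \to K_m \to M_m^b \to M_m$, and it is also exact on $M_m^a \to K_m \to 0$ because of the surjectivity just noted together with the vanishing of $\varprojlim^1$ of $\ker(M_m^a\to K_m)$. I still need that vanishing, and I expect this to be the main obstacle. To get it, I will first show that $\ker(M_m^a \to K_m) = \operatorname{im}(M_m^{c} \to M_m^a)$ for a further syzygy matrix of $A$. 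This holds by flatness of $M_m$, since flatness preserves exactness of a finite free resolution truncated at any stage. The same surjectivity argument then applies to this kernel. Consequently $\varprojlim M_m^a \to \varprojlim K_m$ is surjective, so $M = \varprojlim M_m$ satisfies the exactness criterion, and $M$ is flat.

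For the statement $M/JM \cong \varprojlim M_m/JM_m$, the same reasoning applies to the right-exact sequence $M_m^{g} \to M_m \to M_m/JM_m \to 0$, where $g$ is the number of generators of $J$. The kernel of $M_m \to M_m/JM_m$ is $JM_m$, which is the image of $M_m^g$. These images form a surjective system, so $\varprojlim JM_m = J M$, which gives the isomorphism.

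For part (2), I set $\hat M = \varprojlim M_m$ and argue in the standard way (Stacks 0912 / 0AGW). First, the hypothesis $M_{m+1}/I^mM_{m+1} \cong M_m$ makes all transition maps surjective. A standard argument for complete modules then gives $M/I^mM \cong M_m$. The ideal $I^m$ is finitely generated, so $I^m M \to \varprojlim_k I^m M_k$ is surjective, using the same image-of-surjective-systems trick as in part (1). Flatness of $M$ over $R$ then follows from the local flatness criterion for $I$-adically complete modules over a Noetherian ring: $M$ is $I$-adically complete (this uses $I$ finitely generated) and $M/I^m M$ is flat over $R/I^m$ for all $m$, which implies $M$ is flat (Stacks 0912). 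Finally, for $M/JM \cong \varprojlim M_m/JM_m$, I repeat the argument with finitely many generators of $J$. Surjectivity of the system $(JM_m)$ gives exactness on the left, and completeness of $M$ gives the identification with $M$ in the middle.
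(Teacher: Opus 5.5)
Your argument for part (1) is correct and self-contained; it never uses that $R$ is Artinian, only that $R$ is Noetherian. The paper itself gives no argument here and simply cites \cite[Lemma 4.4.3]{Pan22}.

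The gap is in part (2). Twice you appeal to ``the same image-of-surjective-systems trick'' from part (1). That trick relied on each $M_m$ being flat over $R$, so that $\ker(M_m^g \to M_m)$ is the image of $M_m^{g'}$ for a fixed syzygy module. In (2) each $M_m$ is only flat over $R/I^m$, and the identification fails. For example, take $R = k\llbracket x \rrbracket$, $I = J = (x)$ and $M_m = R/(x^m)$. The generator $x$ has no syzygies, yet $\ker(x : M_m \to M_m) = x^{m-1}R/x^mR \neq 0$.

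For $M/I^mM \cong M_m$ this does no harm, because the standard lemma you also invoke needs only that $I$ is finitely generated; it is proved by successive approximation using $I^{m+j} = I^m I^j$. The final assertion $M/JM \cong \varprojlim M_m/JM_m$ is a different matter. Its whole content is $\varprojlim_m JM_m = JM$, i.e. that $M^g \to \varprojlim_m JM_m$ is onto, i.e. that $L_m := \ker(M_m^g \to M_m)$ has vanishing $\varprojlim^1$. Surjectivity of the system $(JM_m)$ and completeness of $M$ do not give this, and ``repeating the argument'' from (1) does not either.

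The missing ingredient is Artin--Rees. Let $\phi : R^g \to R$ be the map with image $J$, and choose $c$ with $J \cap I^m \subseteq I^{m-c}J$ for $m \geq c$. Then $\phi^{-1}(I^m) \subseteq \ker\phi + I^{m-c}R^g$. Tensor the exact sequence $0 \to \phi^{-1}(I^m)/I^mR^g \to (R/I^m)^g \to R/I^m$ with the $R/I^m$-flat module $M_m$. This gives
\[ \operatorname{im}(\ker\phi \otimes_R M_m) \subseteq L_m \subseteq \operatorname{im}(\ker\phi \otimes_R M_m) + I^{m-c}M_m^g . \]
Hence for $m' \geq m + c$ the image of $L_{m'}$ in $L_m$ is exactly $\operatorname{im}(\ker\phi \otimes_R M_m)$, since $I^{m'-c}M_m = 0$. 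So $(L_m)$ is Mittag-Leffler, and your argument then closes.

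Equivalently: once $M$ is known to be flat with $M/I^mM = M_m$, the right-hand side is the $I$-adic completion of $M \otimes_R R/J$. One must then show that this module is already $I$-adically complete, which again is an Artin--Rees argument.
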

\begin{proof}
    This is \cite[Lemma 4.4.3]{Pan22}. 
\end{proof}
\begin{lemma} \label{lem:filtration_generator_bound}
    Let $N \geq 1$ be an integer, and let $H$  be an $\mathcal{O}[x]$-module with a filtration by $\mathcal{O}[x]$-submodules
    \[
        0 = F^N \leq F^{N-1} \leq  \cdots \leq F^0 = H.
    \]
    If $x (F^{i}/F^{i+1})$ can be generated as an $\mathcal{O}$-module by $r_i$ elements for each $i = 0, \dots, N-1$, then $x^N H$ can be generated as an $\mathcal{O}$-module by $r = \sum_{i=0}^{N-1} r_i$ elements. 
\end{lemma}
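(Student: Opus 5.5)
We argue by induction on $N$, peeling off one graded piece at a time and using Lemma \ref{lem_submodule_of_O_module} to control the number of generators of submodules. The key observation is that multiplication by $x$ pushes each filtration step into the image of $x$ applied to the corresponding graded piece: since each $F^i$ is an $\cO[x]$-submodule, we have $x F^i \subseteq F^i$. Moreover, the image of $xF^i$ in $F^i/F^{i+1}$ equals $x(F^i/F^{i+1})$, which is generated by $r_i$ elements.

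The concrete plan is to prove by descending induction on $i$ the claim that $x^{N-i} F^i$ can be generated as an $\cO$-module by $r_i + r_{i+1} + \cdots + r_{N-1}$ elements. The case $i = N$ is trivial since $F^N = 0$, and the case $i = 0$ is the statement.

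For the inductive step from $i+1$ to $i$, consider the $\cO$-module $xF^i$. It sits in a short exact sequence
\[ 0 \to xF^i \cap F^{i+1} \to xF^i \to x(F^i/F^{i+1}) \to 0. \]
Applying $x^{N-i-1}$, we obtain
\[ x^{N-i}F^i = x^{N-i-1}(xF^i). \]
This is generated by lifts of generators of its image in $F^i/F^{i+1}$ together with generators of $x^{N-i}F^i \cap F^{i+1}$. We need to be slightly careful about which quotient to use, so we choose a cleaner route.

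Set $G = x^{N-i-1}(xF^i) = x^{N-i}F^i$. Its image in $F^i/F^{i+1}$ is $x^{N-i}(F^i/F^{i+1})$, which is an $\cO$-submodule of $x(F^i/F^{i+1})$. By Lemma \ref{lem_submodule_of_O_module}, this image is generated by at most $r_i$ elements. The kernel $G \cap F^{i+1}$ is a submodule of $x^{N-i-1}F^{i+1}$.

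This is where the apparent obstacle lies. The inductive hypothesis only controls $x^{N-i-1}F^{i+1}$, and $G \cap F^{i+1}$ is merely a submodule of it, not obviously equal to it. But that is exactly what Lemma \ref{lem_submodule_of_O_module} handles: the inclusion
\[ G \cap F^{i+1} \subseteq x^{N-i}F^{i} \cap F^{i+1} \subseteq x^{N-i-1}F^{i+1} \]
needs justification, and that justification is the main point to check. For $y \in F^i$ with $x^{N-i}y \in F^{i+1}$, it is not automatic that $x^{N-i}y \in x^{N-i-1}F^{i+1}$.

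To avoid this, we instead apply the inductive hypothesis to a bigger module. Writing $x^{N-i}F^i = x^{N-i-1}(xF^i)$, note that $xF^i + F^{i+1} \supseteq xF^i$. So it suffices to bound generators of $x^{N-i-1}M$, where $M = xF^i + F^{i+1}$ is filtered by
\[ M \supseteq F^{i+1} \supseteq \cdots \supseteq F^N = 0. \]
The quotient $M/F^{i+1} = x(F^i/F^{i+1})$ is generated by $r_i$ elements, so $x^{N-i-1}(M/F^{i+1})$ is also generated by $r_i$ elements, by Lemma \ref{lem_submodule_of_O_module}.

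We now strengthen the induction claim accordingly. For any $\cO[x]$-module $M$ with $F^{i+1} \subseteq M$ and $M/F^{i+1}$ generated by $s$ elements, the module $x^{N-i-1}M$ is generated by $s + r_{i+1} + \cdots + r_{N-1}$ elements. The proof of this claim goes as follows. The module $x^{N-i-1}M$ surjects onto a submodule of $M/F^{i+1}$ with kernel $x^{N-i-1}M \cap F^{i+1}$. Since $x$ preserves $F^{i+1}$, we have
\[ x^{N-i-1}M \cap F^{i+1} \subseteq F^{i+1}, \]
and we need only control this kernel after one further multiplication by $x$.

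It is therefore cleaner to run the whole argument as a general claim by induction on $N$. The claim is: if $x(F^j/F^{j+1})$ is $r_j$-generated for all $j$, then $x^N H$ is $\sum_j r_j$-generated. For the inductive step, let $H' = xH$. It is filtered by $F'^j = xH \cap F^{j+1}$ for $j = 0, \dots, N-2$, together with the top piece $H'/F'^0 \cong x(H/F^1)$. We then check that $x$ applied to each graded piece of $H'$ embeds into the corresponding $x(F^{j+1}/F^{j+2})$, and use Lemma \ref{lem_submodule_of_O_module} to bound its number of generators. Verifying these embeddings, together with the merging of the top two pieces, is the only real bookkeeping, and we expect it to be the step requiring care.
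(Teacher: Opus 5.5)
Your final plan has a genuine gap, and it is exactly the step you set aside as bookkeeping. If you filter $H' = xH$ by $F'^j = xH \cap F^{j+1}$, the lower graded pieces are harmless: they embed into $F^{j+1}/F^{j+2}$, so Lemma \ref{lem_submodule_of_O_module} bounds $x$ of them by $r_{j+1}$ generators. But this filtration of $H'$ still has $N$ pieces. To invoke the hypothesis at $N-1$ you must merge the top two, i.e.\ show that $x(H'/F'^1) \cong x^2(H/F^2)$ is generated by $r_0 + r_1$ elements. That is precisely the case $N = 2$ of the lemma, applied to $H/F^2$. It carries the whole content of the statement, so the induction is circular at that point.

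Nor can the merge be done with the only tool your proposal uses: bound a module by (generators of a kernel) plus (generators of an image), with the kernel controlled by $x$ of the corresponding graded piece. Take $H = \cO[x]/(x^3)$, $F^1 = \cO x^2$, $F^2 = 0$. Then $r_0 = 1$ and $r_1 = 0$, but $x^2 H \cap F^1 = \cO x^2$ while $xF^1 = 0$. In your notation, $xH' \cap F'^0 = \cO x^2 \not\subseteq xF'^0 = 0$. The bound $r_0 + r_1$ survives here only because the image $x^2(H/F^1)$ vanishes, a trade-off that a piece-by-piece count cannot see. This is the same obstruction you correctly spotted earlier: $x^{N-i}F^i \cap F^{i+1}$ need not lie in $x^{N-i-1}F^{i+1}$.

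The missing idea is to bound $x^N H$ from above by a \emph{sum} of explicitly generated submodules, rather than by an extension. Lift generators of $x(F^i/F^{i+1})$ to $e^{(i)}_1, \dots, e^{(i)}_{r_i} \in F^i$. Starting from $h_0 = h$, write $x h_i = \sum_j a^{(i)}_j e^{(i)}_j + h_{i+1}$ with $h_{i+1} \in F^{i+1}$, and unwind to get $x^N h = \sum_{i=0}^{N-1} x^{N-1-i} \sum_j a^{(i)}_j e^{(i)}_j$. So $x^N H$ lies in the $\cO$-span of the $r$ elements $x^{N-1-i}e^{(i)}_j$, and Lemma \ref{lem_submodule_of_O_module} is applied once, to $x^N H$ inside that span. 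This is the paper's proof.

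Your intermediate strengthened claim (with $M/F^{i+1}$ generated by $s$ elements) is true, and it gives an equally good proof by descending induction if you argue the same way. Lifting generators $e_1, \dots, e_s$ of $M/F^{i+1}$ gives $x^{N-i-1}M \subseteq \langle x^{N-i-1}e_1, \dots, x^{N-i-1}e_s\rangle + x^{N-i-1}F^{i+1}$, and the inductive hypothesis controls the second summand. What fails is the detour you chose through the kernel $x^{N-i-1}M \cap F^{i+1}$.
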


\begin{proof}
    For each $i = 0, \dots, N-1$, let $e_1^{(i)}, \dots, e_{r_i}^{(i)} \in F^{i}$ be lifts of a generating set of $x (F^{i}/F^{i+1})$. For any $h \in H$ we can find $a_j^{(0)} \in \mathcal{O}$ such that $h_1 = x h - \sum_{j=1}^{r_0} a_j^{(0)} e_{j}^{(0)} \in F^{1}$. Repeating the same argument with $h_1$, we eventually find $a_j^{(i)} \in \mathcal{O}$ such that
    \[
        x^N h = \sum_{i = 0}^{N-1} x^{N-1-i} \sum_{j = 1}^{r_i} a_{j}^{(i)} e_j^{(i)}.
    \]
    It follows that $x^N h$ is contained in the $\cO$-submodule of $H$ generated by the set $\{ x^{N-1-i} e_j^{(i)} \}$ of size $r = \sum r_i$. To conclude, we apply Lemma \ref{lem_submodule_of_O_module}. 
\end{proof}
\begin{definition}\label{def_bounded_inverse_system}
    We say that a sequence $(H_m)_{m \geq 1}$ of $\mathcal{O}$-modules is bounded if there exists a constant $r$ and surjections $\mathcal{O}^r \to H_m$ for all $m$.
\end{definition}

\begin{lemma} \label{lem:bounded_sequence_limit}
    Let $(H_m){m \geq 1}$ be a bounded inverse system  $\mathcal{O}$-modules such that $\varpi^m H_m = 0$ for all $m$. Then:
    \begin{enumerate}
        \item The limit $\varprojlim_m H_m$ is a finite $\mathcal{O}$-module.
        \item The module $R^1 \varprojlim_m H_m$ vanishes.
    \end{enumerate}
\end{lemma}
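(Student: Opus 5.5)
The idea is to reduce both statements to the Mittag-Leffler condition, using the fact that a bounded system of $\cO$-modules has uniformly bounded length after truncation. The boundedness hypothesis of Definition \ref{def_bounded_inverse_system} gives a constant $r$ and surjections $\cO^r \to H_m$. Since $\varpi^m H_m = 0$, each $H_m$ is a quotient of $(\cO/\varpi^m)^r$. In particular $H_m$ has finite length, at most $mr$.

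For part (2) we verify Mittag-Leffler. Fix $m$ and consider the images $\im(H_{m'} \to H_m)$ for $m' \geq m$. These form a decreasing chain of $\cO$-submodules of $H_m$. Because $H_m$ has finite length, the chain stabilises. Hence the system is Mittag-Leffler, and $R^1 \varprojlim_m H_m = 0$ by the standard result for countable inverse systems of abelian groups.

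For part (1), let $H_m'$ be the stable image in $H_m$. The transition maps $H_{m+1}' \to H_m'$ are surjective, and $\varprojlim H_m' = \varprojlim H_m$. By Lemma \ref{lem_submodule_of_O_module}, each $H_m'$, being a submodule of $H_m$, is generated by $r$ elements.

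It remains to show that a limit of a surjective system of $\cO$-modules, each generated by $r$ elements and killed by $\varpi^m$, is generated by $r$ elements. This is the step needing care, since generators at each level need not be compatible. We argue by compactness. For each $m$, let $G_m \subset (H_m')^r$ be the set of $r$-tuples generating $H_m'$. Each $G_m$ is finite and nonempty. The maps $G_{m+1} \to G_m$ are well defined: a generating tuple maps to a generating tuple because the transition map is surjective. By König's lemma, applied to the images of $G_{m'}$ in $G_m$ (which are nonempty and stabilise since $G_m$ is finite), $\varprojlim G_m$ is nonempty. This gives a compatible tuple $(e_1,\dots,e_r)$ in $\varprojlim H_m'$, and hence a map $\cO^r \to \varprojlim H_m'$.

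Surjectivity of this map follows from surjectivity at each finite level. The kernels $K_m \subset \cO^r$ of $\cO^r \to H_m'$ satisfy $\varprojlim^1 K_m = 0$, because each $K_m$ contains $\varpi^m \cO^r$ and $K_m$ is a closed submodule of the compact module $\cO^r$. Alternatively, take limits of compact Hausdorff modules, where $\varprojlim$ is exact. Therefore $\varprojlim H_m$ is a quotient of $\cO^r$, hence finite. The main obstacle is this last compatibility-of-generators step; the finiteness of the sets $G_m$ (from finite residue field and finite length) is what makes it work.
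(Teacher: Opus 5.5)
Your proof is correct. Part (2), and the reduction of part (1) to the stable images $H'_m$ with surjective transition maps, match the paper exactly.

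The genuine difference is in how you produce a surjection $\cO^r \to \varprojlim H'_m$.

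\textbf{The paper's route.} It observes that $(H'_m/\varpi)_m$ is a bounded system of $k$-vector spaces with surjective transition maps, so it stabilises from some $m_0$ on. It then lifts a single surjection $\cO^r \to H'_{m_0}$ compatibly to maps $\phi_m : \cO^r \to H'_m$; this is possible because $\cO^r$ is free and the transition maps are surjective. Each $\phi_m$ is automatically surjective by Nakayama, because it is surjective modulo $\varpi$.

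\textbf{Your route.} You pick a compatible generating $r$-tuple directly, by applying K\"onig's lemma to the finite sets $G_m$ of generating tuples. This needs the $H'_m$ to be finite sets, i.e.\ it uses that the residue field $k$ is finite. That holds here since $E/\mathbb{Q}_p$ is finite, so nothing is lost.

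\textbf{Comparison.} The paper's Nakayama argument is purely algebraic and would work over any complete DVR. Your argument skips the stabilisation step, and it explicitly justifies something the paper leaves implicit: that compatible level-wise surjections give a surjection on the limit. You do this via compactness, i.e.\ vanishing of $\varprojlim^1$ of the kernels $K_m$. An algebraic way to close that step in the paper's setting is to note that each $\phi_m$ factors through $(\cO/\varpi^m)^r$. The kernels of $(\cO/\varpi^m)^r \to H'_m$ have finite length, so they form a Mittag-Leffler system, and surjectivity follows without compactness.
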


\begin{proof}
    Note that each $H_m$ has finite length. Thus, $(H_m)_{m \geq 1}$ is a Mittag-Leffler system. This implies that $R^1 \varprojlim_m H_m = 0$. Moreover, let 
    \[
        H'_m := \bigcap_{k \geq m} \im(H_k \to H_m).
    \]
    Then the natural map $\varprojlim H'_m \to \varprojlim H_m$ is an isomorphism and the transition maps $H'_{m+1} \to H'_m$ are surjective.

    There exists an integer $m_0$ such that $H'_m/(\varpi) \to H'_{m_0}/(\varpi)$ is an isomorphism for all $m \geq m_0$ (as the sequence $(H'_m / (\varpi))_m$ is a bounded sequence of $\cO / (\varpi)$-modules with surjective transition maps).
    Let $\phi \colon \mathcal{O}^r \to H'_{m_0}$ be a surjection and choose compatible 
    lifts $\phi_m \colon \mathcal{O}^r \to H'_m$ for all $m \geq m_0$. Then the limit of the $\phi_m$ yields a surjection
    \[
        \mathcal{O}^r \to \varprojlim_{m \geq m_0} H'_m \cong \varprojlim_{m} H'_m \cong \varprojlim_m H_m. \qedhere
    \]
\end{proof}
\begin{lemma}\label{lem_patching_lemma}
    Suppose given integers $g, r, l_0, q_0 \geq 0$, along with the following data:
    \begin{enumerate}
        \item a perfect complex of $S$-modules $M_{\infty,\bullet}$, where $S = E \llbracket Y_1, \dots, Y_r \rrbracket$;
        \item a commutative local $S$-subalgebra $T_\infty \subset \End_{\mathbf{D}(S)}(M_{\infty,\bullet})$, whose maximal ideal annihilates $H_{*}(M_\bullet)$, where $M_\bullet := M_{\infty, \bullet} \otimes_{S}^{\mathbb{L}} S / \mathfrak{m}_S$,
        \item a nilpotent ideal $I_\infty < T_\infty$ and a surjective local ring homomorphism $\Phi \colon E\llbracket x_1, \dots, x_g \rrbracket \to T_\infty/I_\infty$.
    \end{enumerate}
    If $H_*(M_\bullet)$ is non-zero, concentrated in degrees $[q_0, q_0 + l_0]$, and $g + l_0 \leq r$, then $\Phi$ is an isomorphism and $\Phi^{-1}(\mathfrak{m}_S T_\infty + I_\infty) = (x_1, \dots, x_g)$. 
\end{lemma}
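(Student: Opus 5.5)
The plan is the standard Calegari--Geraghty dimension count, done over the regular local ring $S$ and then combined with the nilpotent-ideal trick of \cite{AC23}.

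\emph{Step 1: bound on depth and support.} Since $M_{\infty,\bullet}$ is perfect over the regular local ring $S$ of dimension $r$, I would replace it by a minimal complex of finite free $S$-modules. Because $M_\bullet = M_{\infty,\bullet}\otimes^{\mathbb{L}}_S S/\mathfrak{m}_S$ has homology concentrated in $[q_0,q_0+l_0]$, the minimal complex has free terms only in degrees $[q_0,q_0+l_0]$, so it has length $l_0$. The homology $H_*(M_{\infty,\bullet})$ is non-zero, since $H_*(M_\bullet)\neq 0$. I would then apply the Calegari--Geraghty form of the new intersection theorem (\cite[Lemma 6.2]{Cal18}) to this length-$l_0$ complex. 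It gives $\dim_S H_*(M_{\infty,\bullet}) \geq r - l_0 \geq g$. If equality holds, $H_*(M_{\infty,\bullet})$ is concentrated in a single degree $q_0$ (or $q_0+l_0$, depending on the indexing convention) and is a maximal Cohen--Macaulay module of that dimension.

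\emph{Step 2: $\Phi$ is an isomorphism.} $T_\infty$ is a finite $S$-algebra, because it sits inside the finite $S$-module $\End_{\mathbf{D}(S)}(M_{\infty,\bullet})$ and $S$ is Noetherian. It acts faithfully on $\bigoplus_i H_i(M_{\infty,\bullet})$, up to a nilpotent ideal: endomorphisms of a perfect complex that act by zero on homology are nilpotent, of order bounded by the length of the complex. Hence
\[ \dim T_\infty/I_\infty = \dim T_\infty \geq \dim_S H_*(M_{\infty,\bullet}) \geq g. \]
On the other hand, $T_\infty/I_\infty$ is a quotient of the $g$-dimensional domain $E\llbracket x_1,\dots,x_g\rrbracket$. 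Any proper quotient of this domain has dimension $<g$, so $\ker\Phi=0$. It also follows that all the inequalities are equalities, i.e.\ $r-l_0=g$, and $H_*(M_{\infty,\bullet})$ is a single maximal Cohen--Macaulay module $H$ of dimension $g$.

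\emph{Step 3: the ideal statement.} Set $\mathfrak{a} = \Phi^{-1}(\mathfrak{m}_S T_\infty + I_\infty)$. Because $\mathfrak{m}$ of $T_\infty$ annihilates $H_*(M_\bullet)$ and $T_\infty$ is local, $\mathfrak{m}_S T_\infty + I_\infty$ is primary to the maximal ideal. So $\mathfrak{a}$ is $(x_1,\dots,x_g)$-primary, and we need to upgrade this to equality.

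I would argue as follows. $H$ is finite over $T_\infty$ and is killed by a power of $I_\infty$, so $H$ has a finite filtration with graded pieces that are modules over $T_\infty/I_\infty\cong A := E\llbracket x\rrbracket$. Since $H$ is maximal Cohen--Macaulay over $S$ of dimension $g$, the images $Y_1,\dots,Y_r$ act on these pieces through $A$. The conclusion is then equivalent to the images of $Y_1,\dots,Y_r$ generating the maximal ideal of $A$, i.e.\ to $A/\mathfrak{m}_S A = E$.

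To prove this I would use the minimality of the complex. Since $r - l_0 = g$ and $H$ is perfect of projective dimension $l_0$ (Auslander--Buchsbaum), $\mathfrak{m}_S$ acts on $H$ with $H/\mathfrak{m}_S H = H_{q_0}(M_\bullet)$, and this is annihilated by $\mathfrak{m}_{T_\infty}$. One then compares generators: $\mathfrak{m}_{T_\infty}H \subseteq \mathfrak{m}_S H$. Lifting to $A$ and using that a suitable graded piece of $H$ is a faithful torsion-free $A$-module of full rank, we get $x_i \in \mathfrak{m}_S A + \sqrt{0}$, and hence the equality of ideals after reducing modulo $I_\infty$.

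I expect this last step to be the main obstacle, since it is where the nilpotent ideal must be handled carefully. The first thing I would try is to localise at the generic point of $A$, where $I_\infty$ becomes invisible, and to compare ranks there.
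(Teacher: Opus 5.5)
Your Steps 1 and 2 are correct and essentially follow the paper. The Calegari--Geraghty bound gives $\dim_S H_*(M_{\infty,\bullet}) \geq r - l_0 \geq g$. Nilpotence of $I_\infty$ gives $\dim T_\infty = \dim T_\infty/I_\infty \leq g$. So everything is an equality, $M_{\infty,\bullet} \simeq H[q_0]$ with $H = H_{q_0}(M_{\infty,\bullet})$ of dimension $g$, and $\Phi$ is injective because $E\llbracket x_1,\dots,x_g\rrbracket$ is a $g$-dimensional domain. You also correctly isolate the key inclusion $\mathfrak{m}_{T_\infty} H \subseteq \mathfrak{m}_S H$.

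The gap is the final deduction of $\mathfrak{m}_S A = \mathfrak{m}_A$ (with $A = T_\infty/I_\infty$) from this inclusion. Your sentence about lifting to $A$ and using a faithful torsion-free graded piece to get $x_i \in \mathfrak{m}_S A + \sqrt{0}$ is an assertion, not an argument, and the ingredients it names are not the right ones. The inclusion descends to the quotient $N = H/I_\infty H$, giving $\mathfrak{m}_S N = \mathfrak{m}_A N$. It does not descend to the submodules $I_\infty^j H$ of your filtration. Moreover, $N$ need not be torsion-free. Take $S = E\llbracket Y\rrbracket$, $H = S^2$, $T_\infty = S[\epsilon]$ with $\epsilon(a,b) = (0, Ya)$, and $I_\infty = (\epsilon)$: all hypotheses hold, but $N \cong S \oplus S/(Y)$. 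So no rank or freeness argument on $N$ is available. Localising at the generic point of $A$ does not help either, since there $\mathfrak{m}_S A$ and $\mathfrak{m}_A$ both become the unit ideal, and the closed-point information you need is lost.

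The missing idea is to use only faithfulness of $N$ together with regularity of $A$.

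First, $N$ is faithful over $A$. Since $I_\infty$ is nilpotent, $\mathrm{Supp}(N) = \mathrm{Supp}(H)$ has dimension $g$. It is therefore all of the irreducible $g$-dimensional space $\operatorname{Spec} A$, so $\mathrm{Ann}_A N \subseteq \sqrt{0} = 0$.

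Second, for a faithful finite module $N$ over the regular local ring $A$, the equality $\mathfrak{m}_S N = \mathfrak{m}_A N$ forces $\mathfrak{m}_S A = \mathfrak{m}_A$. The paper quotes \cite[Lemma 2.5.9]{AC23} for this. It can also be seen directly. Suppose $J = \mathfrak{m}_S A \neq \mathfrak{m}_A$, and choose a regular system of parameters $z_1,\dots,z_g$ of $A$ with $J \subseteq (z_1,\dots,z_{g-1}) + \mathfrak{m}_A^2$. Then $\mathfrak{m}_A N = JN \subseteq (z_1,\dots,z_{g-1})N + \mathfrak{m}_A(\mathfrak{m}_A N)$. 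Nakayama gives $\mathfrak{m}_A N = (z_1,\dots,z_{g-1})N$, so $N/(z_1,\dots,z_{g-1})N$ has finite length and $\dim N \leq g-1$, contradicting faithfulness.

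Regularity of $A$, which is where $\Phi$ being an isomorphism is used, is essential here. The analogous implication fails for faithful modules over non-regular local rings, which is why one must pass from $H$ over $T_\infty$ to $N$ over $A$.
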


\begin{proof}
    Since the Krull dimension is invariant under nilpotent thickenings, we have $\dim T_\infty \leq g$. Moreover, since $M_{\infty,\bullet}$ is perfect, $T_\infty$ is a finite $S$-algebra. Then \cite[Lemma 2.8]{Kha17} implies that
    \[
        \dim_S H_{*}(M_{\infty, \bullet}) = \dim_{T_\infty} H_{*}(M_{\infty, \bullet}) \leq g.
    \]
    By the Calegari--Geraghty lemma \cite[Lemma 2.9]{Kha17} we also have
    \[
        \dim_S H_{*}(M_{\infty, \bullet}) \geq \dim S - l_0 \geq g
    \] 
    and the previous equation implies that $\dim_S H_*(M_{\infty, \bullet}) = g$. Thus, the second part of \cite[Lemma 2.9]{Kha17}
    shows that $M_{\infty, \bullet}$ is quasi-isomorphic to $H_{q_0}(M_{\infty, \bullet})[q_0]$ and $H := H_{q_0}(M_{\infty, \bullet})$ has projective dimension $l_0$. 

    Therefore, the Auslander--Buchsbaum formula implies that
    $\depth_S H = \dim S - l_0 = g$. It follows from 
    \cite[\href{https://stacks.math.columbia.edu/tag/0AUK}{Lemma 0AUK}]{stacks-project} that 
    \[
    \depth_{T_\infty} H \geq \depth_S H = g.
    \]
    It follows that $\depth_{T_\infty} H = \dim T_\infty = g$. Since $\dim T_\infty / I_\infty = g$, it must be the case that $\Phi$ is an isomorphism. Since $H / I_\infty H$ has $g$-dimensional support, and $T_\infty / I_\infty$ is a domain, it follows that $H / I_\infty H$ is a faithful $T_\infty / I_\infty$-module. 

    To conclude, we will apply \cite[Lemma 2.5.9]{AC23}, which implies that if $\mathfrak{m}_S (H / I_\infty H) = \mathfrak{m}_{T_\infty} (H / I_\infty H)$, then $\mathfrak{m}_S (T_\infty / I_\infty) = \mathfrak{m}_{T_\infty} (T_\infty / I_\infty) = (\Phi(x_1), \dots, \Phi(x_g)) (T_\infty / I_\infty)$. The inclusion \[ \mathfrak{m}_S (H / I_\infty H) \leq \mathfrak{m}_{T_\infty} (H / I_\infty H)\]
    is immediate, since $\mathfrak{m}_S \leq \mathfrak{m}_{T_\infty}$. On the other hand, we have $H / \mathfrak{m}_S H = H_{q_0}(M_\bullet)$, and we are assuming that $\mathfrak{m}_{T_\infty} H_{q_0}(M_\bullet) = 0$, hence 
    \[ \mathfrak{m}_{T_\infty} H \leq \mathfrak{m}_S H, \]
    hence 
    \[ \mathfrak{m}_{T_\infty} (H / I_\infty H) \leq \mathfrak{m}_S (H / I_\infty H). \]
    This completes the proof. 
\end{proof}

\section{Pseudodeformation rings}\label{sec_galois_theory}

Fix a prime $p$, a coefficient field $E / \mathbb{Q}_p$, an integer $n \geq 1$, and a CM number field $F$. Let $\mathcal{C}_\cO$ denote the category of complete Noetherian local $\cO$-algebras with residue field $k$. 

Let $S$ be a finite set of finite places of $F$, containing the $p$-adic places $S_p$. Suppose given a continuous representation $\overline{\rho} : G_{F, S} \to \GL_n(k)$, and let $\overline{D}$ denote its group determinant. We write $R_{\overline{D}, S} \in \mathcal{C}_\cO$ for the object representing the functor $\Def_{\overline{D}, S}$ of pseudodeformations lifting $\overline{D}$ (see \cite[Proposition 2.12]{New23}). We recall the following useful result, which is \cite[Lemma 2.13]{New23}:
\begin{lemma}\label{lem_uniform_bound_on_tangent_space}
    Let $q \geq 0$ be an integer. Then there exists an integer $g = g(q)$ such that for any finite set $Q$ of finite places of $F$ of cardinality $q$, there is a surjection $\cO \llbracket X_1, \dots, X_g \rrbracket \to R_{\overline{D}, S \cup Q}$ in $\mathcal{C}_\cO$.
\end{lemma}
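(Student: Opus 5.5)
The plan is to bound the Zariski cotangent space of $R_{\overline{D}, S \cup Q}$ uniformly in $Q$, and then lift a basis of it to a surjection from a power series ring. Write $R_Q := R_{\overline{D}, S \cup Q}$. Since $R_Q$ is a complete Noetherian local $\cO$-algebra with residue field $k$, a surjection $\cO\llbracket X_1, \dots, X_g\rrbracket \to R_Q$ exists as soon as
\[ \dim_k \mathfrak{m}_{R_Q}/(\mathfrak{m}_{R_Q}^2, \varpi) \leq g. \]
This is the complete Nakayama lemma: lift a spanning set and send the variables to the lifts; if there are fewer lifts than $g$, send the remaining variables to $0$. So the statement reduces to finding an integer $g(q)$ with $\dim_k \Def_{\overline{D}, S\cup Q}(k[\epsilon]) \leq g(q)$ whenever $\# Q = q$.

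The first step is to compare with the case $Q = \emptyset$. I would show that $\dim_k \Def_{\overline{D}, S \cup Q}(k[\epsilon]) \leq \dim_k \Def_{\overline{D}, S}(k[\epsilon]) + C q$ for a constant $C$ depending only on $n$ and $[k:\mathbb{F}_p]$. For this I would use the following description of pseudodeformations. A pseudodeformation to $k[\epsilon]$ is determined by its characteristic polynomial functions on the group algebra, so it is given by a compatible family of multiplicative polynomial laws. The difference of two tangent vectors that agree on $G_{F,S}$ is then controlled by how they restrict to the decomposition groups $G_{F_v}$ for $v \in Q$.

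Concretely, there is a map of tangent spaces $t_{S \cup Q} \to \prod_{v \in Q} t_{v}$, where $t_v$ is the tangent space of the local pseudodeformation functor of $\overline{D}|_{G_{F_v}}$ modulo its unramified part. I would argue that the kernel of this map injects into $t_S$, via a lemma stating that a pseudodeformation whose restriction to each inertia group $I_{F_v}$, $v \in Q$, is trivial (that is, equal to the deformation of $\overline{D}|_{I_{F_v}}$ that is constant in $\epsilon$) factors through $G_{F,S}$. Each $t_v$ has dimension bounded in terms of $n$ alone. Indeed, the local pseudodeformation ring of $\overline{D}|_{G_{F_v}}$ is topologically finitely generated, with a bound depending only on $n$ and the number of topological generators of the relevant quotient of $G_{F_v}$. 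For $v \nmid p$ the tame quotient is generated by $2$ elements, and the wild inertia contributes nothing in $\ell \neq p$ cohomology because it is pro-prime-to-$p$ and acts through a finite group that is uniformly bounded. Hence $\dim t_v$ is bounded, for instance via Chenevier's bound for group determinants of a group with $d$ generators, which depends only on $n$ and $d$.

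The main obstacle is this injection and the uniform local bound, because pseudodeformations do not literally restrict to inertia in a way that gives cohomology groups. If the direct approach is awkward, I would instead use Chenevier's result that $R_{\overline{D}}$ is a quotient of the universal Cayley–Hamilton algebra's trace ring. I would then bound the tangent space by $\sum_{\text{blocks}} \dim H^1(G_{F,S\cup Q}, \cdot)$ of the $\mathrm{Hom}$'s between the Jordan–Hölder constituents of $\overline{\rho}$ (and the $\mathrm{Ext}^1$'s). By the global Euler characteristic formula, or more simply by inflation–restriction, each such $H^1$ grows by at most $\dim H^1(G_{F_v}, \cdot) \leq 2 n^2$ per added place $v$. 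The key ingredient here is that $\dim H^1(G_{F,S\cup Q}, V) \leq \dim H^1(G_{F,S}, V) + \sum_{v\in Q} \dim H^1(I_{F_v}, V)^{G_{F_v}} \leq \dim H^1(G_{F,S}, V) + q \dim V$. Since the bound on the pseudodeformation tangent space in terms of these $\mathrm{Ext}$-groups is polynomial in their dimensions and independent of $Q$ (the constituents of $\overline{\rho}$ do not change), this yields a $g(q)$ depending only on $q$.
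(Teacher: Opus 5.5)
Your reduction to bounding $\dim_k \mathfrak{m}_R/(\mathfrak{m}_R^2,\varpi)$ is fine, and so is your inflation--restriction estimate $\dim H^1(G_{F,S\cup Q},V)\le \dim H^1(G_{F,S},V)+q\dim V$. The gap is the step that should connect the pseudodeformation tangent space to such cohomology groups, and neither of your routes supplies it.

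In the first route, the lemma that a pseudodeformation whose restriction to each $I_{F_v}$ is trivial must factor through $G_{F,S}$ is false. A determinant restricted to a subgroup only sees a semisimplification there. Suppose $\overline\rho$ is absolutely irreducible and $\overline\rho(\Frob_v)$ has two eigenvalues with ratio $q_v\not\equiv 1 \bmod p$ (a level-raising place). Then a class $c\in H^1(G_{F,S\cup\{v\}},\ad\overline\rho)$ can restrict on $I_{F_v}$ to $h\mapsto \lambda(h)N$ with $N\neq 0$ nilpotent. For $\rho=(1+\epsilon c)\overline\rho$ one gets $\det(\sum_h a_h\rho(h))=(\sum_h a_h)^n$ for $h$ ranging over $I_{F_v}$, so $D|_{I_{F_v}}$ is trivial; in fact even $D|_{G_{F_v}}$ is unramified. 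Yet $D$ is ramified at $v$, since $\rho$ is absolutely irreducible and $\rho|_{I_{F_v}}\neq 1$. So the kernel of $t_{S\cup Q}\to\prod_{v\in Q}t_v$ need not be contained in $t_S$.

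In the second route, everything rests on the claim that $\dim_k \Def_{\overline D}(k[\epsilon])$ is bounded polynomially in the dimensions of $\mathrm{Ext}^1$-groups between constituents of $\overline\rho$, and you give no argument for it. Bounds of this shape are known when $\overline\rho^{ss}$ is multiplicity-free (Bella\"{\i}che--Chenevier). Here, however, $\overline\rho$ is completely arbitrary: it may have repeated constituents, $p\le n$ is allowed, and one works with group determinants rather than pseudocharacters. So this claim is the real content of the lemma, not a routine input.

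The paper itself just quotes \cite[Lemma 2.13]{New23}. The argument behind it, whose ingredients reappear in the proof of Lemma \ref{lem_separation_of_characters}(3), avoids any structure theory of tangent spaces. Instead it controls the Galois group through which every pseudodeformation factors.
\begin{itemize}
\item Let $F_1/F$ be the extension cut out by $\overline\rho$, and $M_Q$ the maximal pro-$p$ extension of $F_1$ unramified outside $S\cup Q$. By \cite[Lemma 3.8]{Che14}, every pseudodeformation of $\overline D$ unramified outside $S\cup Q$ factors through $\Gal(M_Q/F)$.
\item $\Gal(M_Q/F)$ is topologically generated by at most $[F_1:F]+\dim_{\mathbb{F}_p}H^1(G_{F_1,S\cup Q},\mathbb{F}_p)$ elements.
\item Your own inflation--restriction inequality, applied over $F_1$ with $V=\mathbb{F}_p$, bounds this by $m(q)=[F_1:F](1+q)+\dim_{\mathbb{F}_p}H^1(G_{F_1,S},\mathbb{F}_p)$. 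Here one uses that each $v\in Q$ has at most $[F_1:F]$ places of $F_1$ above it.
\item A surjection $\psi$ from the free profinite group on $m(q)$ generators onto $\Gal(M_Q/F)$ exhibits $R_{\overline D,S\cup Q}$ as a quotient of the pseudodeformation ring of $\psi^*\overline D$, which is Noetherian by \cite{Che14}.
\item Since $\psi^*\overline D$ is determined by the images of the generators in the finite group $\Gal(F_1/F)$, only finitely many such rings occur. One can take $g(q)$ to be the maximum of the dimensions of their relative cotangent spaces $\mathfrak{m}/(\mathfrak{m}^2,\varpi)$.
\end{itemize}
In short, the missing idea is to bound the number of generators of the relevant Galois group, rather than to bound the tangent space through restriction to inertia or through $\mathrm{Ext}$-groups.
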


\para Fix integers $a \leq b$. Following \cite[\S 2.3]{New23}, we write $\Def^{[a, b]}_{\overline{D}, S} \subset \Def_{\overline{D}, S}$ for the subfunctor which associates to any $A \in \mathcal{C}_\cO$ the set of pseudodeformations $D$ of $G_{F, S}$ over $A$ satisfying the following condition:
\begin{itemize}
    \item There exists a Cayley--Hamilton representation $(\cO[G_{F, S}], D) \to (B, D')$ over $A$ such that for each $m \geq 1$, $B / \mathfrak{m}_A^m B$, equipped with its left $G_{F, S}$-action, has the following property: for each $v \in S_p$, $B / \mathfrak{m}_A^m B$ is isomorphic as $\mathbb{Z}_p[G_{F_v}]$-module to a subquotient of a $\mathbb{Z}_p$-lattice in a semistable $\mathbb{Q}_p[G_{F_v}]$-module with Hodge--Tate weights contained in the interval $[a, b]$. 
\end{itemize}
Using the results of \cite{WWE19}, \cite[Proposition 2.14]{New23} shows that $\Def^{[a, b]}_{\overline{D}, S}$ is represented by a quotient $R_{\overline{D}, S}^{[a, b]}$ of $R_{\overline{D}, S}$. We write $I_S^{[a, b]} \leq R_{\overline{D}, S}$ for the kernel of the corresponding surjection. Thus, for any homomorphism $R_{\overline{D}, S} \to A$ in $\mathcal{C}_\cO$ corresponding to a pseudodeformation $D$ over $A$, we see that $D$ satisfies the above semistability condition if, and only if, $I_S^{[a, b]} A = 0$.

Now suppose given a lift $\rho : G_{F, S} \to \GL_n(\cO)$ of $\overline{\rho}$ such that $\rho \otimes_\cO E$ is absolutely irreducible, and such that for each $v \in S_p$, $\rho|_{G_{F_v}} \otimes_\cO E$ is semistable with Hodge--Tate weights in $[a, b]$. If $N \geq 1$, we call a set $Q$ of finite places of $F$ a Taylor--Wiles set of level $N$ if it satisfies the following conditions:
\begin{itemize}
    \item $Q \cap S = \emptyset$.
    \item For each $v \in Q$, $q_v \equiv 1 \text{ mod }p^N$.
    \item For each $v \in Q$, the characteristic polynomial $\det(X - \rho(\Frob_v))$ splits in $\cO[X]$ as a product of distinct linear factors. 
\end{itemize}
We call a Taylor--Wiles datum of level $N$ a tuple $(Q, (\alpha_{v, 1}, \dots, \alpha_{v, n})_{v \in Q})$, where $Q$ is a Taylor--Wiles set of level $N$ and, for each $v \in Q$, $\alpha_{v, 1}, \dots, \alpha_{v, n}$ is an ordering of the roots of $\det(X - \rho(\Frob_v))$ in $\cO$. 

If $Q$ is a Taylor--Wiles set, there is a tautological surjection $R_{\overline{D}, S \cup Q} \to R_{\overline{D}, S}$, and we have $I_{S \cup Q}^{[a, b]} R_{\overline{D}, S} = I_S^{[a, b]}$. Let us write 
$\mathfrak{p}_{S \cup Q} \leq R_{\overline{D}, S \cup Q}$ for the kernel of the homomorphism $R_{\overline{D}, S \cup Q} \to \cO$ associated to the group determinant $D$ of $\rho$ (which factors through $R_{\overline{D}, S \cup Q}^{[a, b]}$, because $\rho$ is assumed to be semistable). Our goal is to show that we can choose $Q$ such that the relative tangent space $\mathfrak{p}_{S \cup Q} / (I_{S \cup Q}^{[a, b]}, \mathfrak{p}_{S \cup Q}^2)$ is controlled. We will be able to do this only under certain conditions on $\rho$, that we now recall. First, a local condition:
\begin{definition}
    Let $v$ be a finite place of $F$, and let $(r, N) = \mathrm{WD}(\rho|_{G_{F_v}} \otimes_\cO E)$. We say that $\rho|_{G_{F_v}}$ is generic if there is no non-zero morphism $(r, N) \to (r(1), N)$ of Weil--Deligne representations over $E$.
\end{definition}
By \cite[Remark 1.2.9]{All16}, this is equivalent to the equality $H^1_f(F_v, \ad \rho \otimes_\cO E) = H^1_g(F_v, \ad \rho \otimes_\cO E)$ of local Selmer groups \`a la Bloch--Kato. 

Next, a global condition, which is the same as \cite[Definition 2.23]{New23}:
\begin{definition}
    Let $H \leq \GL_n(\cO)$ be a compact subgroup. We say that $H$ is enormous if for every simple $E[H]$-submodule $U$ of the adjoint representation on $M_{n \times n}(E)$, we can find $h \in H$ with $n$ distinct eigenvalues in $\cO$ such that $\tr E[h] \cdot U \neq 0$.
\end{definition}
Here is the statement we will use.
\begin{proposition}\label{prop_existence_of_TW_primes}
    Let $q \geq \operatorname{corank} H^1(F_S / F, \ad \rho \otimes_\cO E / \cO(1))$, and suppose that $F$ contains an imaginary quadratic field $F_0$. Suppose that $\rho$ satisfies the following conditions:
    \begin{enumerate}
    \item For every place $v \in S$, $\rho|_{G_{F_v}}$ is generic. 
    \item Let $L / F(\zeta_{p^\infty})$ be the extension cut out by $\ad \rho$. Then $H^1(L / F, \ad \rho(1) \otimes_\cO E) = 0$.
    \item The image $\rho(G_{F(\zeta_{p^\infty})})$ is enormous.
    \end{enumerate}
    Then $q \geq n [ F^+ : \mathbb{Q} ]$, 
    and there exists $d \geq 1$ with the following property: for any $N \geq 1$, we can find a Taylor--Wiles datum $(Q_N, (\alpha_{v, 1}, \dots, \alpha_{v, n})_{v \in Q_N})$ of level $N$ with $|Q_N| = q$ and $\sum_{v \in Q_N} \sum_{i \neq j} \ord_{\varpi} (\alpha_{v, i} - \alpha_{v, j}) \leq d$, and a map
    \[ \cO \llbracket X_1, \dots, X_{n q - n[F^+ : \mathbb{Q}] + 1} \rrbracket \to R_{\overline{D}, S \cup Q_N} \]
    such that $(X_1, \dots, X_{nq- n [ F^+ : \mathbb{Q} ] + 1})  R_{\overline{D}, S \cup Q_N} \leq \mathfrak{p}_{S \cup Q_N}$, and 
    \[ \mathfrak{p}_{S \cup Q_N} / (\mathfrak{p}_{S \cup Q_N}^2, I_{S \cup Q_N}^{[a, b]}, x_1, \dots, x_{nq -  n [ F^+ : \mathbb{Q} ] + 1}) \]
    is a quotient of $(\cO / \varpi^d \cO)^{g(q)}$ (where $g = g(q)$ is as defined in the statement of Lemma \ref{lem_uniform_bound_on_tangent_space}). Moreover, the residue characteristic of each $v \in Q_N$ is split in $F_0$.
\end{proposition}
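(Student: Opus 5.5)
We will follow the strategy of \cite[\S 2]{New23} (which adapts Kisin's and Pan's arguments to pseudodeformations), with the global Euler characteristic computation for a CM field in place of the polarized one. The plan has three parts.

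\textbf{Step 1: a dual Selmer computation.} We identify the relative tangent space of $R^{[a,b]}_{\overline{D}, S \cup Q}$ at $\mathfrak{p}_{S\cup Q}$, up to bounded torsion, with a Selmer group. Since $\rho \otimes E$ is absolutely irreducible, pseudodeformations near $D$ are the same as deformations of $\rho$. Concretely, we will show that
\[ \mathfrak{p}_{S \cup Q} / (\mathfrak{p}_{S \cup Q}^2, I_{S\cup Q}^{[a,b]}) \otimes_\cO E \]
is dual to a Selmer group $H^1_{\mathcal{L}_Q}(F, \ad \rho \otimes E)$. Its local conditions are:
\begin{itemize}
\item at $v\in S_p$, the condition $H^1_g$, which equals $H^1_f$ by genericity;
\item at $v\in S\setminus S_p$, all of $H^1$; this is where genericity gives $H^0(F_v,\ad\rho(1))=0$, so that $H^1(F_v,\ad \rho)=H^1_{ur}$ up to the finite part;
\item at $v\in Q$, all of $H^1$.
\end{itemize}
Because the claim concerns integral modules, we will actually work with $\ad\rho\otimes E/\cO$ and $\ad\rho\otimes \varpi^{-m}\cO/\cO$. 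The bounded-torsion error is controlled by a constant depending only on $\rho$ and $S$, and on $\sum_{v \in Q}\sum_{i\neq j}\ord_\varpi(\alpha_{v,i}-\alpha_{v,j})$. This last quantity is why the datum must satisfy the bound by $d$.

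\textbf{Step 2: choosing $Q_N$.} We use the Greenberg--Wiles formula. For a CM field,
\[ h^1_{\mathcal{L}_Q} - h^1_{\mathcal{L}_Q^\perp}(\ad\rho(1)) = h^0 - h^0(\ad\rho(1)) + \sum_{v\in Q} n - n^2[F^+:\mathbb{Q}] + \text{(local terms at }S). \]
The local terms at $S$ are $\dim H^1_f(F_v) - h^0(F_v)$, giving $n^2[F_v:\mathbb{Q}_p]/2$ summed, from Hodge--Tate regularity. The generic places contribute $0$. Each $v\in Q$ contributes $n$, the dimension of unramified diagonal torus characters. The upshot is $h^1 \le nq - n[F^+:\mathbb{Q}] + 1 + h^1_{\mathcal{L}_Q^\perp}$. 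The $+1$ comes from $h^0(\ad\rho)=1$, and the adjustment is that the $H^1_f$ condition at $p$ has dimension $n^2[F:\mathbb{Q}]/2 - \ldots$. We then choose $Q_N$ of size $q$ killing the dual Selmer group. We kill the dual classes in $H^1(F_S/F,\ad\rho\otimes E/\cO(1))$ at finite level $\varpi^{-m}$, one Chebotarev argument at a time: for each class we find $\sigma\in G_{F(\zeta_{p^N})}$ with $\rho(\sigma)$ regular semisimple and the class nonvanishing on the corresponding eigenspace. This uses hypothesis (3), enormousness, together with hypothesis (2), the vanishing of $H^1(L/F,\cdot)$, which ensures the classes restrict nontrivially to $G_L$. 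The auxiliary requirement that the residue characteristic split in $F_0$ is imposed simultaneously, by also requiring $\sigma$ to fix the Galois closure of $F/\mathbb{Q}$ together with $F_0$; this forces $q_v$ to be prime, i.e.\ degree one. The inequality $q\ge n[F^+:\mathbb{Q}]$ then falls out of the formula: the dual Selmer group being killed forces $h^1\ge 0$ with the correct count, or equivalently the corank of $H^1(F_S/F,\cdot)$ is at least $n[F^+:\mathbb{Q}]$ by the Euler characteristic at infinity.

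\textbf{Step 3: lifting generators.} We choose $x_1,\dots,x_{nq-n[F^+:\mathbb{Q}]+1}\in\mathfrak{p}_{S\cup Q_N}$ whose images span the free part of the relative tangent space, and let $X_i \mapsto x_i$. The quotient by these elements is then a torsion $\cO$-module with exponent bounded by $d$, uniformly in $N$. It is generated by at most $g(q)$ elements, since by Lemma \ref{lem_uniform_bound_on_tangent_space} $\mathfrak{p}/\mathfrak{p}^2$ is itself $g(q)$-generated and we can apply Lemma \ref{lem_submodule_of_O_module}. Hence it is a quotient of $(\cO/\varpi^d)^{g(q)}$.

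\textbf{Main obstacle.} The hardest part is obtaining a uniform $d$ independent of $N$. This requires the Chebotarev choice to be made with bounded $\ord_\varpi(\alpha_{v,i}-\alpha_{v,j})$ and with the torsion in the local cohomology at $Q_N$ bounded. We would first try choosing the $\sigma$'s from a fixed finite list, using the compactness of $\rho(G_{F(\zeta_{p^\infty})})$, so that the eigenvalue gaps are bounded once and for all.
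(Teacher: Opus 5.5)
Your overall strategy is the paper's. Its proof simply runs the argument of \cite[Corollary 2.27]{New23} with three modifications: hypothesis (2) is substituted for purity, the count $nq$ becomes $nq-n[F^+:\mathbb{Q}]+1$ because there is no polarization, and a density argument gives the splitting in $F_0$. However, two of your steps fail as written.

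\textbf{Steps 2 and 3 mix up characteristic $0$ and torsion counts.} This is the more serious problem. With $E$-coefficients a Taylor--Wiles place contributes nothing to the Greenberg--Wiles formula. For $N>d$ one has $\ord_\varpi(q_v\alpha_i-\alpha_j)=\ord_\varpi(\alpha_i-\alpha_j)\le d$, so $h^0(F_v,\ad\rho(1)\otimes E)=0$ and $H^1(F_v,\ad\rho\otimes E)=H^1_{ur}$.

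Hence the free part of $T=\mathfrak{p}_{S\cup Q_N}/(\mathfrak{p}_{S\cup Q_N}^2,I^{[a,b]}_{S\cup Q_N})$ has rank $\dim_E H^1_f(F,\ad\rho\otimes E)$. That is exactly the group the paper later proves is zero. The $Q_N$-directions instead show up as torsion summands whose exponents grow with $N$. So choosing the $x_i$ to span the free part leaves a quotient not killed by any $\varpi^d$ uniform in $N$.

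The $n$ per place comes from tamely ramified diagonal directions, not unramified characters. These exist only with coefficients $\ad\rho\otimes\varpi^{-m}\cO/\cO$ for $m\le N$, where $\epsilon\equiv 1 \bmod \varpi^m$ on $G_{F_v}$ and $H^0(F_v,\cdot)$ has length $nm+O(d)$. What you must prove is
$\operatorname{length}_\cO(T/\varpi^m T)\le km+C$ for all $m\le N$, with $k=nq-n[F^+:\mathbb{Q}]+1$ and $C$ independent of $m$ and $N$. Then write $T\cong\cO^a\oplus\bigoplus_i\cO/\varpi^{e_i}$ and take $m=C+1$. This gives $am+\sum_i\min(e_i,m)\le km+C$, so at most $k$ summands (free ones included) have exponent exceeding $C$. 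Take the $x_i$ to generate those summands; then $d=C$ works. For small $N$, use a datum of level $C+1$, which is also of level $N$.

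Two smaller corrections in the same place:
\begin{itemize}
\item The $p$-adic local term is $\frac{n(n-1)}{2}[F_v:\mathbb{Q}_p]$, not $\frac{n^2}{2}[F_v:\mathbb{Q}_p]$. Against the archimedean term $-n^2[F^+:\mathbb{Q}]$ this gives $-n[F^+:\mathbb{Q}]$ directly, with no ``adjustment''.
\item Your first justification of $q\ge n[F^+:\mathbb{Q}]$, that $h^1\ge 0$, only yields $q\ge[F^+:\mathbb{Q}]$. The second one is right: Tate's global Euler characteristic over the totally complex field $F$ even gives corank $\ge n^2[F^+:\mathbb{Q}]$.
\end{itemize}

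\textbf{The splitting condition.} The Frobenius at $v$ is an element of $G_F$. Asking it to fix the Galois closure $\widetilde F$ of $F/\mathbb{Q}$ does not force $v$ to have degree one over $\mathbb{Q}$. The condition is vacuous when $F/\mathbb{Q}$ is Galois, and $F_0\subset F$ is fixed automatically. In general, Frobenius conditions in $G_F$ cannot detect $f(v/\ell)$. The paper instead uses that places of $F$ of degree one over $\mathbb{Q}$ have Dirichlet density $1$. So this condition can be imposed on top of any positive-density Chebotarev condition. Such a place, with $\ell$ unramified, has residue characteristic split in $F_0$.
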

\begin{proof}
    The proof is essentially the same as the proof of \cite[Corollary 2.27]{New23}, with the following modifications:
    \begin{itemize}
        \item We no longer assume purity, which is used in \emph{loc. cit.} to deduce the vanishing of the group $H^1(L / F, \ad \rho(1) \otimes_\cO E)$. Instead, we have inserted this vanishing as a hypothesis.
        \item We no longer assume oddness, as we are working over $F$ without imposing a conjugate self-duality condition. This has the consequence that the number of generators $nq$ is replaced by $nq -  n [ F^+ : \mathbb{Q} ] + 1$.
        \item Choosing primes with the Chebotarev density theorem as in \emph{loc. cit.}, we can find a set $Q_N$ annihilating the dual Selmer group, with $|Q_N| = \operatorname{corank} H^1(F_S / F, \ad \rho \otimes_\cO E / \cO(1))$, and such that for each $v \in Q_N$, the residue characteristic is split in $F_0$ (because the set of places of $F$ split over $\mathbb{Q}$, and therefore with residue characteristic split in $F_0$, has Dirichlet density 1). \qedhere
    \end{itemize} 
\end{proof}

\begin{remark} We note that although purity is a convenient criterion for the vanishing of the cohomology group appearing in the statement of Proposition \ref{prop_existence_of_TW_primes}, this can often be checked directly (cf. \cite[Lemma 4.28]{All23}).
\end{remark}
\section{Patching}\label{sec_patching}

In this section we will prove our main vanishing result under additional assumptions, before deducing the general case in \S \ref{sec_general_results} below. Let $F$ be a CM field, let $n \geq 1$ be an integer, and let $\pi$ be a cuspidal, regular algebraic automorphic representation of $\GL_n(\mathbb{A}_F)$. We will prove:
\begin{theorem}\label{thm_vanishing_of_Selmer_special_case}
    Fix a prime $p$, an isomorphism $\iota : \overline{\mathbb{Q}}_p \to \mathbb{C}$,  a coefficient field $E / \mathbb{Q}_p$, and a representation $\rho : G_F \to \GL_n(\cO)$ that is conjugate, in $\GL_n(\overline{\bbQ}_p)$, to $r_{\pi, \iota}$. Suppose that the following conditions are satisfied:
    \begin{enumerate}
      \item For each finite place $v$ of $F$, $\pi_v$ is Iwahori-spherical.
      \item For each finite place $v$ of $F$ such that $\pi_v$ is ramified or such that $v | p$, $\rho|_{G_{F_v}}$ is generic. 
      \item Let $L / F(\zeta_{p^\infty})$ denote the extension cut out by $\ad \rho$. Then \[
      H^1(L / F, \ad \rho(1) \otimes_\cO E) = 0.
      \]
         \item $\rho(G_{F(\zeta_{p^\infty})})$ is enormous.
         \item For every rational prime $l$ such that $l = p$ or $\pi$ is ramified at an $l$-adic place of $F$, there exists an imaginary quadratic subfield $F_0 \leq F$ such that $l$ splits in $F_0$. 
    \item For each $p$-adic place $a$ of $F$, lying over the place $\overline{a}$ of $F^+$, there exists a $p$-adic place $\overline{b} \neq \overline{a}$ of $F^+$ such that
    \[ \sum_{\overline{v} \neq \overline{a}, \overline{b}} [F^+_{\overline{v}} : \mathbb{Q}_p ] > \frac{1}{2} [ F^+ : \mathbb{Q} ] \]
    (where the sum runs over the set of $p$-adic places $\overline{v}$ of $F^+$ not equal to $\overline{a}$ or $\overline{b}$).
    \end{enumerate}
    Then $H^1_f(F, \ad \rho \otimes_\cO E) = 0$.
\end{theorem}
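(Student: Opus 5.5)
We prove the theorem by Calegari--Geraghty patching with ultrafilters, in two stages as sketched in the introduction, and conclude with Lemma \ref{lem_patching_lemma}. The first step is to reduce to a tangent-space computation. Set $S$ to be the places where $\pi$ ramifies together with $S_p$. Let $\mathfrak{p} = \mathfrak{p}_S$ be the prime of $R^{[a,b]}_{\overline{D}, S}$ corresponding to $\rho$, where $[a,b]$ contains the Hodge--Tate weights; $\rho$ is semistable since $\pi_v$ is Iwahori-spherical at $v\mid p$. Because $\rho \otimes E$ is absolutely irreducible, the completed local ring of $R_{\overline{D},S}[1/p]$ at $\mathfrak{p}$ pro-represents deformations of $\rho\otimes E$. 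The semistability condition cuts out $H^1_{st}$ at $p$, and this equals $H^1_f$ by genericity (hypothesis (2) together with \cite[Remark 1.2.9]{All16}). At ramified $v \neq p$ one has $H^1_g = H^1(F_v,-)$, and genericity gives $H^1_f = H^1_g$, so no local condition is lost. Consequently
\[ H^1_f(F, \ad\rho\otimes E) \cong \Hom_E\bigl(\mathfrak{p}/(\mathfrak{p}^2, I_S^{[a,b]}) \otimes E, E\bigr), \]
and it suffices to show that this relative cotangent space is torsion.

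The second step sets up the automorphic input. Let $\mathbb{T}_{Q}$ be the Hecke algebras acting on $R\Gamma(X_{K_Q},\cO)$ at Iwahori level at $Q$; these are localised at the maximal ideal of $\rho$ and projected using the $\alpha_{v,i}$. The companion local-global compatibility results \cite{Hev25, Aca25} give maps $R^{[a,b]}_{\overline{D},S\cup Q}\to \mathbb{T}_Q/J_Q$ with nilpotent $J_Q$ of bounded exponent. Hypotheses (5) and (6) are exactly what these results require: the splitting condition in imaginary quadratic fields, and the degree condition at $p$ for the $\ell=p$ case. Proposition \ref{prop_existence_of_TW_primes} applies by hypotheses (2)--(5). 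It supplies, for each $N$, Taylor--Wiles data $Q_N$ with bounded $d$, and with relative tangent spaces that are quotients of $(\cO/\varpi^d)^{g}$ after killing the $nq-n[F^+:\mathbb{Q}]+1$ elements $X_i$. The complexes $C_{Q_N}=R\Gamma(X_{K_{Q_N}})$ are perfect over $\cO[\Delta_{Q_N}]$, where $\Delta_{Q_N}\cong\prod(\mathbb{Z}/p^N)^n$. We have $r=nq$ variables, $g = nq-n[F^+:\mathbb{Q}]+1$, and defect $l_0=n[F^+:\mathbb{Q}]-1$ (up to the usual diagonal normalisation), so $g+l_0\le r$ holds.

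The third step is the two-stage ultrapatching, which is the main obstacle. The cohomology of $C_{Q_N}\otimes \cO/\varpi^m$ need not have bounded $\cO$-rank, because we impose no condition on $\overline\rho$. To control it we use the discriminant operator $\delta_{Q_N}=\prod_{v}\prod_{i\ne j}(U_{v,i}-U_{v,j})$. The filtration of Iwahori-level cohomology by the number of places at which one passes from spherical to Iwahori level, combined with Lemma \ref{lem:filtration_generator_bound}, bounds the number of generators of $\delta_{Q_N}^{r'}H_*(C_{Q_N}\otimes\cO/\varpi^m)$ uniformly in $N$. This uses the bound $d$ on $\sum\ord_\varpi(\alpha_{v,i}-\alpha_{v,j})$.

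In the first stage, for fixed $r'$ and $m$, we ultraproduct over $N$ the complexes modulo $(\varpi^m, \text{augmentation}^{p^{r'}})$. The resulting objects are perfect over $\cO/\varpi^m[\mathbb{Z}_p^{nq}]/(y_i^{p^{r'}})$, and flatness is preserved by Lemmas \ref{lem_product_of_flat_is_flat} and \ref{lem_inverse_limit_of_flat_is_flat}. We then take the limit over $m$, where boundedness and Lemma \ref{lem:bounded_sequence_limit} give finiteness and kill $R^1\varprojlim$. After this we localise at the characteristic-zero prime $\mathfrak{q}$; $\delta$ is a unit there. In the second stage we let $r'\to\infty$ and obtain a perfect complex $M_{\infty}$ over $E\llbracket Y_1,\dots,Y_{nq}\rrbracket$. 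The patched Hecke algebra $T_\infty$ is then a quotient, modulo a nilpotent ideal $I_\infty$, of $E\llbracket x_1,\dots,x_g\rrbracket$, after first completing $R^{[a,b]}[1/p]$ at $\mathfrak{p}$ and patching the surjections from the Proposition.

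The fourth step concludes. Lemma \ref{lem_patching_lemma} shows that $\Phi$ is an isomorphism and that $\mathfrak{m}_S T_\infty+I_\infty$ corresponds to $(x_1,\dots,x_g)$. Specialising back at $Y=0$, this says that the $X_i$ generate $\mathfrak{p}$ in the completion of $R^{[a,b]}_{\overline D,S\cup Q_N}$ modulo $\mathfrak{p}$-torsion, and therefore that $\mathfrak{p}_S/(\mathfrak{p}_S^2,I_S^{[a,b]})\otimes E=0$. By the first step, $H^1_f(F,\ad\rho\otimes_\cO E)=0$. The delicate points are making the localisation at $\mathfrak{q}$ commute with the ultraproduct and the limits, and ensuring that $H_*(M_\bullet)\neq0$ lies in the expected range of degrees $[q_0,q_0+l_0]$; the latter follows from the known concentration of the $\pi$-part of cohomology in characteristic zero.
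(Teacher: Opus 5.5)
Your outline has the right architecture: tangent-space reduction, Proposition \ref{prop_existence_of_TW_primes}, $\delta$-controlled ultrapatching, localisation at a characteristic-zero prime where $\delta$ is a unit, and Lemma \ref{lem_patching_lemma} with $g+l_0=nq$. However, the final deduction does not work as written. What is missing is exactly where the absence of a hypothesis on $\overline{\rho}$ matters.

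\textbf{The final deduction.} That the $X_i$ generate $\mathfrak{p}$ at level $S\cup Q_N$ up to torsion is just the input from Proposition \ref{prop_existence_of_TW_primes}; it says nothing about level $S$. What Lemma \ref{lem_patching_lemma} actually gives is that the augmentation ideal $\mathfrak{a}$ of $\widehat{S}_{\infty,\mathfrak{a}}$ generates the maximal ideal of $T_\infty/I_\infty\cong\widehat{R}^p_{\mathfrak{p}^p}/I^p$. To descend this you need two things:
\begin{enumerate}
\item an $\widehat{S}_{\infty,\mathfrak{a}}$-algebra structure on the patched deformation ring, compatible with the one on $T_\infty$;
\item the identification $\widehat{R}^p_{\mathfrak{p}^p}/(I^p,\mathfrak{a})\cong\widehat{R}_{\mathfrak{p}}/I^{ss}$.
\end{enumerate}
Classically both come from Hensel's lemma, because the Frobenius eigenvalues at $v\in Q_N$ are distinct mod $\varpi$. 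Here they are only distinct in $\cO$, so the universal determinant restricted to $W_{F_v}$ does not split into characters over $R_{S\cup Q_N}$. Hence the usual $\cO[\Delta_{Q_N}]$-algebra structure on $R_{S\cup Q_N}$, with quotient $R_S$, is unavailable. For the same reason you cannot ``project using the $\alpha_{v,i}$'' integrally on the Hecke side.

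The paper handles this as follows. It splits $\alpha_{\infty,i}$ into characters only over $\widehat{R}^p_{\mathfrak{p}^p}$. It proves $\widehat{R}^p_{\mathfrak{p}^p}/(\mathfrak{a})=\widehat{R}_{\mathfrak{p}}$ by a separate argument with pseudodeformations of a free profinite group, showing that the ideal cut out by inertia at the $Q_r$ equals $\mathfrak{a}\widehat{R}^p_{\mathfrak{p}^p}$. It then identifies these characters with the Hecke characters $\gamma_i^{(j)}$, since the determinants agree and are multiplicity-free modulo $\widetilde{\mathfrak{q}}$ (Lemma \ref{lem_separation_of_characters}, Theorem \ref{thm_existence_of_pached_Galois_action}). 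You need an argument of this kind.

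\textbf{The boundedness step.} The construction steps are also misdescribed. Without truncating by a fixed power of the augmentation ideal, $\delta_{Q_N}^{r'}H_*(C_{Q_N}\otimes\cO/\varpi^m)$ at $K_1(Q_N)$-level is not bounded in $N$. A filtration ``by the number of places'' is also not what makes Lemma \ref{lem:filtration_generator_bound} apply. The actual input has two parts:
\begin{enumerate}
\item The maps $\alpha,\beta$ between $C_\bullet\otimes_{A^W}A$ and the $K_0(Q)$-level complex satisfy $\alpha\beta=\beta\alpha=\delta^{n!}$. So $\delta^{n!}H_*$ at $K_0(Q)$-level factors through bounded spherical-level cohomology.
\item The $\mathfrak{a}$-adic filtration of the $K_1(Q)$-level complex mod $\mathfrak{a}^N$ has graded pieces that are copies of the $K_0(Q)$-level complex. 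Lemma \ref{lem:filtration_generator_bound} is applied to the induced filtration on homology with $x=\delta^{n!}$.
\end{enumerate}

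\textbf{Patching the Galois action.} This is not just ``patching the surjections''. Since $H_*(C_{Q_r,1,\bullet})$ has unbounded length, the Hecke algebras acting on it need not have nilpotence exponent bounded in $r$. So $R_r\to\mathbb{T}_{Q_r}/J_r$ need not factor through a fixed $R_r/\mathfrak{m}_{R_r}^n$, and does not induce a map from $R^p$. The paper instead lets $R_r$ act on the bounded modules $\delta^{n!N}H$ and transfers the action to $H_*(M_{1,N,\bullet})_{\widetilde{\mathfrak{q}}}$ via Proposition \ref{prop_computation_of_limit_cohomology}. Also, $I^{ss}_r$ is killed only after multiplying by $\delta_\ell^{N_2}$ (Theorem \ref{thm_existence_of_Galois_action}). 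So, contrary to what you state, the maps do not factor through $R^{[a,b]}_{\overline{D},S\cup Q}$ integrally.

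\textbf{Hypothesis (2) of Lemma \ref{lem_patching_lemma}.} This needs the maximal ideal of $T_\infty$ to kill $H_*(M_\bullet)$. That uses the fact that only $\pi$ contributes to $H_*(X^G_K,\mathcal{V}_\lambda^\vee)_{\mathfrak{q}'}$. Non-vanishing in the right range of degrees is not enough.
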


\para We briefly describe the role of each hypothesis. Hypothesis (1) is a simplifying assumption that is always satisfied under base change. Hypothesis (2) is required in order for the local lifting rings to be smooth at the point determined by $\rho|_{G_{F_v}}$; it is conjectured to always hold for geometric Galois representations. Hypotheses (3) and (4) enter into the Taylor--Wiles argument (in the form applied here). Hypotheses (5) and (6) are used in the proof of local-global compatibility, a basic assumption for our purposes.  

\para We are free to enlarge $E$, so we can assume that $E$ contains the eigenvalues of every element in the image of $\rho$ (using that $E$, being a $p$-adic local field, has finitely many extensions of bounded degree).

We begin the proof of the theorem by introducing some of the necessary objects. We set $G = \Res_{F / \mathbb{Q}} \GL_n$, and write $X^G$ for the symmetric space associated with $G$. We say that an open compact subgroup $K = \prod_v K_v \leq \GL_n(\mathbb{A}_F^\infty)$ is good if it satisfies the following conditions:
\begin{itemize}
    \item $K$ is neat.
    \item For each finite place $v$ of $F$, $K_v \leq \GL_n(\cO_{F_v})$. 
\end{itemize}
\begin{lemma}\label{lem_genericity_at_unramified_places}
    Let $u \nmid p$ be a place of $F$ such that $\pi_u$ is unramified. Then $\rho|_{G_{F_u}}$ is unramified and generic.
\end{lemma}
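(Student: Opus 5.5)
Two things need to be shown: that $\rho|_{G_{F_u}}$ is unramified, and that its Weil--Deligne representation is generic.

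\emph{Unramifiedness.} This is local-global compatibility at $u \nmid p$ for $r_{\pi,\iota}$. The construction of $r_{\pi,\iota}$ (e.g.\ \cite{Har16}) gives, for all but finitely many $u$, an isomorphism
\[ \mathrm{WD}(r_{\pi,\iota}|_{G_{F_u}})^{F\text{-ss}} \cong \rec_{F_u}^T(\iota^{-1}\pi_u), \]
with the Frobenius semisimplification. Unramifiedness at every $u \nmid p$ where $\pi_u$ is unramified comes from the companion result \cite{Hev25}, or already from the construction, since $r_{\pi,\iota}$ is unramified outside the ramification of $\pi$ and $p$. Because $\pi_u$ is unramified, the right-hand side is an unramified Weil--Deligne representation with $N=0$. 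Hence $\rho|_{I_{F_u}}$ acts through a unipotent quotient, and its monodromy $N$ equals the monodromy of $\mathrm{WD}$, which must vanish since Frobenius semisimplification does not change $N$. Therefore $\rho|_{G_{F_u}}$ is unramified, and $\mathrm{WD}(\rho|_{G_{F_u}}) = (r,0)$ with $r$ unramified.

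\emph{Genericity.} For $N=0$ on both sides, a morphism $(r,0) \to (r(1),0)$ is just a $W_{F_u}$-equivariant map $r \to r(1)$. It vanishes if no eigenvalue $\alpha$ of $r(\Frob_u)$ has $q_u^{-1}\alpha$ (up to the normalisation of the twist) also as an eigenvalue, i.e.\ if no two Satake parameters of $\pi_u$ have ratio $q_u^{\pm1}$. This holds because $\pi_u$ is a local component of a cuspidal automorphic representation of $\GL_n$, hence generic, and it is unramified. An unramified generic representation is a full irreducible unramified principal series $\mathrm{Ind}(\chi_1,\dots,\chi_n)$, and by Bernstein--Zelevinsky such an induced representation is irreducible iff $\chi_i\chi_j^{-1} \neq |\cdot|^{\pm 1}$ for all $i \neq j$. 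Translating through $\rec^T_{F_u}$, whose Frobenius eigenvalues are the $\chi_i(\varpi_u)$ up to a common twist, gives exactly that no ratio of Frobenius eigenvalues equals $q_u^{\pm1}$. Hence there is no nonzero morphism $(r,0)\to(r(1),0)$.

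\emph{Main obstacle.} The delicate step is the first: having local-global compatibility at $u$ up to semisimplification, or up to Frobenius semisimplification, and using it to conclude that $N=0$ on the Galois side. If only the semisimplified version were available, it would not by itself rule out $N\neq 0$, and genericity requires comparing actual Weil--Deligne representations. For this I would invoke the full local-global compatibility at $\ell \neq p$ from \cite{Hev25}, which identifies $\mathrm{WD}$ up to Frobenius semisimplification, including the monodromy. The genericity argument then goes through verbatim, since the condition involves only $N$ and the Frobenius eigenvalues, both of which Frobenius semisimplification preserves.
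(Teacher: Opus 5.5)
Your argument has the same structure as the paper's: local--global compatibility at $u$ gives unramifiedness, and the local representation theory of the generic representation $\pi_u$ gives genericity. The genericity step is fine, and slightly more economical than the paper's. You use only that $\pi_u$ is generic and unramified, hence a full irreducible unramified principal series, together with the Bernstein--Zelevinsky irreducibility criterion. The paper instead appeals to Tadi\'c's classification of generic unitary representations \cite{Tad86}. Your eigenvalue criterion is also valid without knowing that $\rho(\Frob_u)$ is semisimple, since $X \mapsto XA - cAX$ is invertible exactly when $A$ and $cA$ have disjoint spectra.

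The weak point is how you justify the first step.

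\emph{The source for unramifiedness.} Unramifiedness at \emph{every} $u \nmid p$ with $\pi_u$ unramified does not come `already from the construction'. The construction in \cite{Har16} gives it, together with the matching of Frobenius eigenvalues, only at places over rational primes that are unramified in $F$ and over which $\pi$ is unramified. The extension to all $v \nmid p$ is Varma's theorem \cite{Var24}, which is what the paper cites. Varma shows that $\mathrm{WD}(r_{\pi,\iota}|_{G_{F_v}})$ has the same semisimplification as $\rec^T_{F_v}(\iota^{-1}\pi_v)$ (suitably normalised), with monodromy `more nilpotent' than it. For $\pi_u$ unramified this forces $N = 0$, hence unramifiedness.

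\emph{Full monodromy compatibility is not needed.} You attribute full compatibility including monodromy to \cite{Hev25}. The paper itself only uses that identification under a genericity hypothesis, in the proof of Theorem \ref{thm_general_vanishing_result}. More importantly, the `main obstacle' you describe disappears once you use your own second step. Since $r(\Frob_u) N = q_u^{\pm 1} N r(\Frob_u)$, the operator $N$ carries the generalised $\alpha$-eigenspace of $r(\Frob_u)$ into the generalised $q_u^{\pm 1}\alpha$-eigenspace. So the fact that no two Satake parameters of $\pi_u$ have ratio $q_u^{\pm 1}$ already forces $N = 0$. Then $r|_{I_{F_u}}$ has finite image, hence is semisimple, and its semisimplification is trivial, so $\rho|_{I_{F_u}}$ is trivial.

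Compatibility up to semisimplification therefore suffices. But it has to be compatibility at $u$ itself, which is exactly Varma's theorem.
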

\begin{proof}
    It follows from the main theorem of \cite{Var24} that $\rho|_{G_{F_u}}$ is unramified. The classification of generic unitary representations of $\GL_n(F_u)$ \cite{Tad86} shows that $\rho|_{G_{F_{u}}}$ is generic.
\end{proof}

\para Using Lemma \ref{lem_genericity_at_unramified_places}, we may choose a finite set $S$ of finite places of $F$ satisfying the following conditions:
\begin{itemize}
    \item $S$ contains every place $v$ of $F$ such that $v | p$ or $\pi_v$ is ramified.
    \item $S$ is stable under complex conjugation.
    \item For each place $v \in S$, there exists an imaginary quadratic subfield $F_0 \subset F$ such that the residue characteristic of $v$ splits in $F_0$.
    \item For each place $v \in S$, $\rho|_{G_{F_v}}$ is generic. 
    \item There exists a place $u \in S$ such that $u \nmid p$, $\pi_u$ is unramified, and if $\ell_u$ denotes the residue characteristic of $u$, then $\ell_u$ is odd and $\ell_u$ is unramified in $F$.
\end{itemize}
We also fix a rational prime $\ell$ split in $F$, and prime to each element of $S$. After twisting $\pi$ by a power of the norm character, we can assume that, for each place $v | \ell$ of $F$, the representation $\rho|_{G_{F_v}} \oplus (\rho|_{G_{F_v}}^{c, \vee} \otimes \epsilon^{1-2n})$ is generic. (This will ensure that the Hecke operator $\delta_\ell$ appearing in the statement of Theorem \ref{thm_existence_of_Galois_action} below acts on $\pi^\infty$ with a non-zero eigenvalue.) We take $K = \prod_v K_v$ to be the good subgroup of $\GL_n(\mathbb{A}_F^\infty)$ defined as follows: 
\begin{itemize}
    \item If $v \not\in S$, then $K_v = \GL_n(\cO_{F_v})$.
    \item If $v \in S - \{ u \}$, then $K_v = \Iw_v$ is the standard Iwahori subgroup of $\GL_n(\cO_{F_v})$ (matrices which are upper-triangular modulo $\varpi_v$).
    \item $K_u = \ker(\GL_n(\cO_{F_u}) \to \GL_n(k(u)))$. (Then the conditions on $u$ imply that $K_u$ has no non-trivial elements of finite order.)
\end{itemize}
If $( Q = (v_1, \dots, v_q), (\alpha_{v, 1}, \dots, \alpha_{v, n})_{v \in Q})$ is a Taylor--Wiles datum, prime to $S$ and $\ell$, then we write $K_1(Q) \leq K_0(Q) \leq K$ for the good subgroups defined as follows:
\begin{itemize}
    \item $K_1(Q)_v = K_0(Q)_v = K_v$ if $v \not\in Q$.
    \item If $v \in Q$, then $K_0(Q)_v = \Iw_v$, and $K_1(Q)_v = \Iw_{v, 1} := \ker(\Iw_v \to k(v)^\times(p)^n)$ is its maximal subgroup of $p$-power index. (Thus $k(v)^\times(p)$ denotes the maximal $p$-power quotient of $k(v)^\times$.)
\end{itemize}
Then we can naturally identify $K_0(Q) / K_1(Q) = \Delta_Q := \prod_{v \in Q} k(v)^\times(p)^n$. We will always impose the further condition on $Q$ that the residue characteristic of each place $v \in Q$ is split in some imaginary quadratic subfield of $F$.

We write $\mathbb{T}^S = \mathcal{H}(\GL_n(\mathbb{A}_F^{\infty, S}), \GL_n(\widehat{\cO}_F^S)) \otimes_{\mathbb{Z}} \cO$ for the usual unramified Hecke algebra. If $Q$ is a Taylor--Wiles datum, then we define $A_Q = \cO[ \{ (t_{v}^{(j)})^{\pm 1} \}_{v\in Q, 1 \leq j \leq n}]$. According to \cite[Proposition 2.2.7]{10author}, there are universal characters 
\[ t_{v, i} : F_v^\times \to \mathcal{H}(\GL_n(F_v), \Iw_{v, 1}) \otimes_{\mathbb{Z}} \cO, \]
and we use the values $t_{v, i}(\varpi_v)$ to endow $\mathcal{H}(\GL_n(F_v), \Iw_{v, 1}) \otimes_{\mathbb{Z}} \cO$ with an $A_Q[\Delta_Q]$-algebra structure. On the other hand, the group $W_Q = \prod_{v \in Q} S_n$ acts on $A_Q$, and we can identify $\mathbb{T}^S = \mathbb{T}^{S \cup Q} \otimes_{\cO} A_Q^{W_Q}$. We write $\delta_Q = \prod_{v \in Q} \prod_{1 \leq i < j \leq n} (t_{v, i} - t_{v, j})^2 \in A_Q^{W_Q}$. 

We also introduce an `abstract' version of these objects. Fixing an integer $q \geq 1$, we set $A = \cO[\{ (t_{i}^{(j)})^{\pm 1} \}_{1 \leq i \leq q, 1 \leq j \leq n}]$, $W = \prod_{j=1}^q S_n$, and write $\delta = \prod_{j=1}^q \prod_{1 \leq i < k \leq n} (t_{j, i} - t_{j, k})^2 \in A^W$.

Let $\lambda \in (\mathbb{Z}^n_+)^{\Hom(F,E)}$ be the weight of $\pi$ (or rather, its image under $\iota^{-1}$), and let $\cV_\lambda$ be the $\cO[\prod_{v | p}\GL_n(\cO_{F_v})]$-module defined in \cite[\S 2.2.1]{10author} (it is an $\cO$-lattice in the representation of $(\Res_{F / \mathbb{Q}} \GL_{n, F})_E$ of highest weight $\lambda$).  Let $\mathfrak{S}_\bullet$ be the complex of singular chains  of the topological space
\[ \mathfrak{X}_G = \GL_n(F) \backslash X^G \times \GL_n(\mathbb{A}_F^\infty), \]
with $\cO$-coefficients. Then $\mathfrak{S}_\bullet$ is a complex of $\cO[\GL_n(\mathbb{A}_F^\infty)]$-modules, free as $\cO$-modules. We define
\begin{align*}
    C_\bullet &= \mathfrak{S}_\bullet \otimes_{\cO[K]} \mathcal{V}_\lambda^\vee, \\
 C_{Q, 0, \bullet} &=\mathfrak{S}_\bullet \otimes_{\cO[K_0(Q)]} \mathcal{V}_\lambda^\vee, 
\end{align*}
and
\[ C_{Q, 1, \bullet} = \mathfrak{S}_\bullet \otimes_{\cO[K_1(Q)]} \mathcal{V}_\lambda^\vee. \]
These complexes have the following desirable properties.
\begin{proposition}\label{prop_complexes_compute_classical_cohomology} For an open compact subgroup $K_0 \leq K$, let $X^G_{K_0} = \mathfrak{X}_G / K_0$ denote the quotient topological space, and write again $\mathcal{V}_\lambda^\vee$ for the associated local system on $X^G_{K_0}$. Then:
    \begin{enumerate}
        \item $C_\bullet$ is a bounded above complex of $\mathbb{T}^S$-modules, free as $\cO$-modules, equipped with a canonical isomorphism $H_\ast(C_{\bullet}) = H_\ast(X^G_K, \mathcal{V}_\lambda^\vee)$.
        \item Let $Q$ be a Taylor--Wiles set. Then: 
        \begin{enumerate}
        \item $C_{Q, 0, \bullet}$ is a bounded above complex of $\mathbb{T}^S$-modules, free as $\cO$-modules, equipped with a canonical isomorphism $H_\ast(C_{Q, 0, \bullet}) = H_\ast(X^G_{K_0(Q)}, \mathcal{V}_\lambda^\vee)$. 
        \item $C_{Q, 1, \bullet}$ is a bounded above complex of $\mathbb{T}^S[\Delta_Q]$-modules, free as $\cO[\Delta_Q]$-modules, equipped with a canonical isomorphism $H_\ast(C_{Q, 1, \bullet}) = H_\ast(X^G_{K_1(Q)}, \mathcal{V}_\lambda^\vee)$.
        \end{enumerate}
    \end{enumerate}
\end{proposition}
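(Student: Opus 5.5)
The plan is to reduce everything to standard facts about singular chains on a space with a free, properly discontinuous action, applied to $\mathfrak{X}_G$ with the right action of $\GL_n(\mathbb{A}_F^\infty)$.

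\emph{Freeness over $\cO$ and over $\cO[\Delta_Q]$.} Since $K$ (and hence $K_0(Q)$, $K_1(Q)$) is good, in particular neat, the open compact $K_0 \leq K$ acts freely on $\mathfrak{X}_G$. Indeed, the stabiliser of a point $(x,g)$ is identified with a subgroup of $\GL_n(F) \cap g K_0 g^{-1}$ fixing $x \in X^G$. This is a discrete subgroup of a compact group, hence finite, and neatness forces it to be trivial. The action is also properly discontinuous, since $K_0$ is profinite and acts through its finite quotients on each of the finitely many connected components at finite level.

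It follows that a singular simplex $\sigma$ of $\mathfrak{X}_G$ has trivial stabiliser in $K_0$. Consequently $\mathfrak{S}_\bullet$ is, in each degree, a free $\cO[K_0]$-module in the discrete/smooth sense: it is a direct sum of permutation modules $\cO[K_0/U]$ with $U$ open. More precisely, $\mathfrak{S}_n \otimes_{\cO[K_0]} \mathcal{V}_\lambda^\vee$ is a direct sum over $K_0$-orbits of simplices of copies of $\mathcal{V}_\lambda^\vee$. I would phrase this directly as follows: choosing orbit representatives gives
\[
C_{n} \cong \bigoplus_{\text{orbits}} \mathcal{V}_\lambda^\vee ,
\]
which is free over $\cO$ because $\mathcal{V}_\lambda^\vee$ is a finite free $\cO$-module.

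For $C_{Q,1,\bullet}$, the simplices of $\mathfrak{X}_G$ are first grouped into $K_0(Q)$-orbits. Each $K_0(Q)$-orbit splits into $K_1(Q)$-orbits permuted simply transitively by $\Delta_Q = K_0(Q)/K_1(Q)$. This gives $C_{Q,1,n} \cong \bigoplus \cO[\Delta_Q] \otimes_\cO \mathcal{V}_\lambda^\vee$. Here one uses that $\Delta_Q$ acts trivially on $\mathcal{V}_\lambda^\vee$, since $Q$ is prime to $p$; the result is free over $\cO[\Delta_Q]$. The complexes are concentrated in nonnegative degrees, which gives the boundedness claimed.

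\emph{Identification of homology.} Since the action is free and properly discontinuous, $\mathfrak{X}_G \to X^G_{K_0}$ is a covering map. The complex $\mathfrak{S}_\bullet \otimes_{\cO[K_0]} \mathcal{V}_\lambda^\vee$ is then the standard complex computing singular homology of $X^G_{K_0}$ with coefficients in the local system $\mathcal{V}_\lambda^\vee$. To see this, note that simplices of $X^G_{K_0}$ lift uniquely up to $K_0$-translation, because simplices are simply connected. This gives the canonical isomorphisms in (1), (2a) and (2b).

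\emph{Hecke actions.} The Hecke action comes from the $\GL_n(\mathbb{A}_F^{\infty})$-action on $\mathfrak{S}_\bullet$. For $g$ away from $S$ (resp.\ $S\cup Q$), a double coset $K g K = \coprod g_i K$ acts on $\mathfrak{S}_\bullet\otimes_{\cO[K]}\mathcal{V}_\lambda^\vee$ by $\sigma\otimes v \mapsto \sum \sigma g_i \otimes v$, with $g_i$ trivial at $p$ so that $\mathcal{V}_\lambda^\vee$ is untouched. One checks well-definedness and that this yields an algebra action of $\mathbb{T}^S$ by the usual coset computation. Since $K_0(Q)_v=K_v$ for $v\notin Q$, the operators at $v\notin S\cup Q$ act as before. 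For $v\in Q$ one uses the identification $\mathbb{T}^S=\mathbb{T}^{S\cup Q}\otimes A_Q^{W_Q}$ together with the action of $\mathcal{H}(\GL_n(F_v),\Iw_{v,1})\otimes\cO$, via the characters $t_{v,i}$ of \cite[Proposition 2.2.7]{10author}, to define the action of $A_Q^{W_Q}$ and of $A_Q[\Delta_Q]$. The $\Delta_Q$-action commutes with everything because $\Iw_{v,1}$ is normal in $\Iw_v$.

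The main obstacle I expect is this last step. Realising the Hecke action at $Q$ at the level of complexes, rather than only on homology, requires the correct normalisation of \cite[\S2.2]{10author}. One also needs to check commutativity on the nose. If that fails, I would accept the action as a homotopy-coherent one, by functoriality in the derived category as in \cite{10author}, which suffices for the later patching.
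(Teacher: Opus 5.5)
Your proposal is correct, and it is essentially the standard argument that the paper outsources to \cite[Proposition 6.2]{Kha17}, with the $\mathbb{T}^S$-action on $C_{Q,0,\bullet}$ and $C_{Q,1,\bullet}$ defined through $\mathbb{T}^S = \mathbb{T}^{S\cup Q}\otimes_\cO A_Q^{W_Q}$ exactly as you suggest; the Hecke operators act on these complexes on the nose via algebra homomorphisms, so no homotopy-coherent fallback is needed. Two justifications need fixing: trivial stabilisers make each $\mathfrak{S}_n$ a direct sum of copies of the regular module $\cO[K_0]$ of the abstract group, not of permutation modules $\cO[K_0/U]$ with $U$ open (those would give coinvariants $(\mathcal{V}_\lambda^\vee)_U$ rather than your correct formula for $C_n$), and the covering property comes from $\mathfrak{X}_G$ being a disjoint union of copies of $X^G$ on which the component stabilisers act through the neat arithmetic groups $\GL_n(F)\cap gK_0g^{-1}$, freely and properly, not from $K_0$ acting through finite quotients.
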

\begin{proof}
    See \cite[Proposition 6.2]{Kha17} for a very similar statement. In 2(a) and (b), we are using the identification $\mathbb{T}^S = \mathbb{T}^{S \cup Q} \otimes_\cO A_Q^{W_Q}$ to define the action of $\mathbb{T}^S$ on each complex. 
\end{proof}

\para We will patch these complexes using ultrafilters. Suppose therefore given an infinite sequence $(Q_N, ( \alpha_{v, i}))$ of Taylor--Wiles data such that for each $N \geq 1$, $Q_N$ has level $N$, and $|Q_N| = q$. Let $S_\infty = \cO \llbracket (\mathbb{Z}_p^n)^q \rrbracket$, and fix for each $N \geq 1$ surjections $(\mathbb{Z}_p^n)^q \to \Delta_{Q_N}$. We write $\mathfrak{a} \leq S_\infty$ for the augmentation ideal. 

Let $\mathcal{F}$ be a non-principal ultrafilter on $\mathbb{N}$, and let $\mathbf{R} = \prod_{N \geq 1} \cO$. If $I \in \mathcal{F}$, then we define $e_I \in \mathbf{R}$ by $e_{I, N} = 1$ if $N \in I$, and $e_{I, N} = 0$ otherwise. Then $\mathcal{S} = \{ e_I \mid I \in \mathcal{F} \}$ is a multiplicative subset of $\mathbf{R}$, and we can define the localisation $\mathbf{R}_{\mathcal{F}} = \mathcal{S}^{-1} \mathbf{R}$. The natural map $\mathbf{R} \to \mathbf{R}_{\mathcal{F}}$ is surjective, and factors through the projection $\prod_{N \geq 1} \cO \to \prod_{N \geq m} \cO$ for any $m \geq 1$. 

Let $\mathfrak{m} = \ker( \mathbb{T}^S \to k)$ denote the maximal ideal associated to the reduction modulo $\varpi$ of the Hecke eigenvalues of $\iota^{-1} \pi^\infty$. We define:
\[ M'(m)_\bullet = \mathbf{R}_{\mathcal{F}} \otimes_{\mathbf{R}} \prod_{r \geq 1} \left( C_{\bullet} \otimes_{\mathbb{T}^S} \mathbb{T}^S_{\mathfrak{m}} / (\varpi^m) \right) ; \]
\[ M(m)_\bullet =  \mathbf{R}_{\mathcal{F}} \otimes_{\mathbf{R}}  \prod_{r \geq 1} \left( C_{ \bullet}\otimes_{\mathbb{T}^S} \mathbb{T}^S_{\mathfrak{m}} / (\varpi^m) \right) \otimes_{A^W} A \]
(where $A^W$ acts on $\prod_{r \geq 1} C_\bullet\otimes_{\mathbb{T}^S} \mathbb{T}^S_{\mathfrak{m}} / (\varpi^m)$ via the diagonal map $A^W \to \prod_{r \geq 1} A_{Q_r}^{W_{Q_r}}$);
\[ 
M_{0}(m)_{\bullet} =  \mathbf{R}_{\mathcal{F}} \otimes_{\mathbf{R}} \prod_{r \geq m} \left( C_{Q_r, 0, \bullet}\otimes_{\mathbb{T}^S} \mathbb{T}^S_{\mathfrak{m}} / (\varpi^m) \right); 
\]
and, if $m \geq N \geq 1$, 
\[ M_{1, N}(m)_{\bullet} = \mathbf{R}_{\mathcal{F}} \otimes_{\mathbf{R}} \prod_{r \geq m + N} \left( C_{Q_r, 1, \bullet} / (\mathfrak{a}^N) \otimes_{\mathbb{T}^S} \mathbb{T}^S_{\mathfrak{m}} / (\varpi^m ) \right). \] 
Suppose that $r \geq m$. Then, in \cite[Proposition 3.1]{New23} are constructed morphisms of $\mathbb{T}^S \otimes_{A_{Q_r}^{W_{Q_r}}} A_{Q_r}$-modules
\[ \alpha_{Q_r} : C_{\bullet} \otimes_{\mathbb{T}^S} \mathbb{T}^S_{\mathfrak{m}} / (\varpi^m) \otimes_{A_{Q_r}^{W_{Q_r}}} A_{Q_r} \to C_{Q_r, 0, \bullet} \otimes_{\mathbb{T}^S} \mathbb{T}^S_{\mathfrak{m}} / (\varpi^m) \]
and
\[ \beta_{Q_r} : C_{Q_r, 0, \bullet} \otimes_{\mathbb{T}^S} \mathbb{T}^S_{\mathfrak{m}}/ (\varpi^m) \to C_{\bullet} \otimes_{\mathbb{T}^S} \mathbb{T}^S_{\mathfrak{m}} / (\varpi^m) \otimes_{A_{Q_r}^{W_{Q_r}}} A_{Q_r}. \]
More precisely, in \emph{loc. cit.} such morphisms are constructed for invariants, as opposed to the co-invariants we consider here. However, using the notation of \cite[\S 3]{New23}, we can take $\alpha_{Q_r}( (m_{\mathbf{a}} )_{\mathbf{a}}) = \sum_{\mathbf{a}} Q_{\mathbf{a}, 1} \tr_{K_0(Q_r) / K} m_{\mathbf{a}}$, $\beta_{Q_r} ( m ) = (\operatorname{cores}_{K_0(Q_r) / K} P_{1, \mathbf{a}} m)_{\mathbf{a}}$. The dual of the proof of \cite[Proposition 3.1]{New23} then shows that  $\alpha_{Q_r} \circ \beta_{Q_r}$ and $\beta_{Q_r} \circ \alpha_{Q_r}$ are both given by multiplication by the element $\delta_{Q_r}^{n!}$. 
\begin{proposition}\label{prop_patched_complexes_torsion_level}
\leavevmode
    \begin{enumerate}
        \item The natural map $C_\bullet \otimes_{\mathbb{T}^S} \mathbb{T}^S_{\mathfrak{m}}/ (\varpi^m) \to M'(m)_\bullet$ is a quasi-isomorphism of complexes of $\mathbb{T}^S$-modules, free as $\cO / (\varpi^m)$-modules. 
        \item $M_0(m)_\bullet$ is a complex of $\mathbb{T}^S \otimes_\cO A$-modules, free as $\cO / (\varpi^m)$-modules. 
        \item The products of these maps $\alpha_{Q_r}$, $\beta_{Q_r}$ ($r \geq m$) described above induce morphisms of complexes of $\mathbb{T}^S \otimes_\cO A$-modules
        \[ \alpha : M(m)_\bullet \to M_0(m)_\bullet, \]
        \[ \beta : M_0(m)_\bullet \to M(m)_\bullet, \]
        such that $\alpha \circ \beta$ and $\beta \circ \alpha$ are both given by multiplication by the element $\delta^{n!} \in A$. 
        \item $M_{1, N}(m)_\bullet$ is a complex of $\mathbb{T}^S \otimes_\cO A \otimes_\cO S_\infty / \mathfrak{a}^N$-modules, free as $S_\infty / (\varpi^m, \mathfrak{a}^N)$-modules. Moreover, there are canonical isomorphisms
        \[ M_{1, N+1}(m)_\bullet / (\mathfrak{a}^N) \cong M_{1, N}(m)_\bullet \]
        (for $N \geq 1$) and
        \[ M_{1, 1}(m)_\bullet / (\mathfrak{a}) \cong M_{0}(m)_\bullet \]
        of complexes of $\mathbb{T}^S \otimes_\cO A \otimes_\cO S_\infty$-modules. 
    \end{enumerate}
\end{proposition}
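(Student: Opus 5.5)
Each of the four statements is checked factor by factor in the products $\prod_r$. We then pass through the localisation $\mathbf{R}_{\mathcal{F}} \otimes_{\mathbf{R}} -$, which is exact: it is a localisation of $\mathbf{R}$, so it is flat and commutes with cohomology. The key input for (1) is a finiteness statement. The complex $C_\bullet \otimes_{\mathbb{T}^S} \mathbb{T}^S_{\mathfrak{m}}/(\varpi^m)$ has finitely many nonzero terms up to homotopy, and its cohomology is finite. Since $K$ is neat, $X^G_K$ has a finite triangulation (Borel--Serre). So $C_\bullet$ may be replaced by a quasi-isomorphic bounded complex of finite free $\cO$-modules, and localisation at $\mathfrak{m}$ is a direct summand, because the finite Hecke algebra acting on the cohomology is semilocal.

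For (1), the map $C_\bullet\otimes \mathbb{T}^S_\mathfrak{m}/(\varpi^m) \to \prod_r(\cdots)$ is the diagonal. I would show that $\mathbf{R}_{\mathcal{F}}\otimes_{\mathbf{R}} \prod_r X \cong X$ whenever $X$ is a finite $\cO/\varpi^m$-module. This holds because $\prod_r X = \mathbf{R}\otimes_{\cO} X$ for finitely presented $X$, and $\mathbf{R}_{\mathcal{F}}\otimes_{\cO}\cO/\varpi^m \cong \cO/\varpi^m$: an ultraproduct of copies of a finite ring is that ring, since each element of $\prod_r \cO/\varpi^m$ is constant on some set in $\mathcal{F}$. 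Applying this to the cohomology groups, together with exactness of $\prod$ and of localisation, gives a quasi-isomorphism. Freeness over $\cO/\varpi^m$ follows because each factor $C_\bullet \otimes \mathbb{T}^S_{\mathfrak{m}}/(\varpi^m)$ is free (it is a direct summand of a free module, hence projective, and therefore free over the local ring $\cO/\varpi^m$). A product of free $\cO/\varpi^m$-modules is flat, by Lemma \ref{lem_product_of_flat_is_flat}, and flat modules over an Artinian local ring are free. Localisation preserves flatness, so the same argument gives freeness.

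For (2), the action of $A$ on $M_0(m)_\bullet$ is defined factorwise via $A \to A_{Q_r}$, identifying the $j$-th place of $Q_r$ with the index $j$, through the operators $t_{v,i}(\varpi_v)$ acting on Iwahori level $K_0(Q_r)$. Freeness is proved exactly as in (1). For (3), the maps $\alpha_{Q_r},\beta_{Q_r}$ for $r\ge m$ assemble into maps of products; the identifications $A\to A_{Q_r}$ make these $A$-linear. The equalities $\alpha_{Q_r}\beta_{Q_r}=\delta_{Q_r}^{n!}$ (and in the other order) hold in each factor, and $\delta\mapsto\delta_{Q_r}$, so the required identities hold on the product and hence after localisation. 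One subtlety is that $M(m)_\bullet$ is defined using $\otimes_{A^W} A$ applied to a product. I would check that this tensor product commutes with the product, which holds because $A$ is finite free over $A^W$ (Pittie--Steinberg).

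For (4), each $C_{Q_r,1,\bullet}$ is free over $\cO[\Delta_{Q_r}]$ (Proposition \ref{prop_complexes_compute_classical_cohomology}). For $r \ge m+N$, the group $\Delta_{Q_r}$ has exponent at least $p^r$. Then $S_\infty/(\varpi^m,\mathfrak{a}^N) \to \cO[\Delta_{Q_r}]/(\varpi^m,\mathfrak{a}^N)$ is an isomorphism, because $(1+y)^{p^r}-1 \in (\varpi^m, y^N)$ once $r\ge m+N$. Hence each factor is free over the finite local ring $S_\infty/(\varpi^m,\mathfrak{a}^N)$, and the flatness argument of (1) gives freeness. The isomorphism for the transition $N+1\to N$ is a reduction modulo $\mathfrak{a}^N$ that commutes with products and localisation, since $\mathfrak{a}^N$ is finitely generated; the index shift from $r\ge m+N+1$ to $r \ge m+N$ is harmless, because $\mathbf{R}_{\mathcal{F}}$ ignores finitely many indices. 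For the last isomorphism, $C_{Q,1,\bullet}/\mathfrak{a} \cong C_{Q,0,\bullet}$ because $\mathfrak{S}_\bullet\otimes_{\cO[K_1(Q)]}\mathcal{V}^\vee$ taken modulo the augmentation ideal of $\Delta_Q$ is $\Delta_Q$-coinvariants, i.e.\ $\mathfrak{S}_\bullet\otimes_{\cO[K_0(Q)]}\mathcal{V}^\vee$; this is compatible with the Hecke actions. I expect the main obstacle to be the bookkeeping in (1) and (4): ensuring that localisation at $\mathfrak{m}$ commutes with the products, and that tensor products with non-finitely-presented objects are avoided. Everything should reduce to working over the finite rings $\cO/\varpi^m$ and $S_\infty/(\varpi^m,\mathfrak{a}^N)$.
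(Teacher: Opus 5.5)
Your proposal is correct and follows the paper's proof essentially step by step. For (1) you use exactness of products and of $\mathbf{R}_{\mathcal{F}}\otimes_{\mathbf{R}}-$ together with the fact that the localised diagonal $U \to \mathbf{R}_{\mathcal{F}}\otimes_{\mathbf{R}}\prod_r U$ is an isomorphism for finite torsion $U$. Freeness comes from Lemma \ref{lem_product_of_flat_is_flat} and ``flat over an Artinian local ring implies free''. For (3) you move $\otimes_{A^W} A$ inside the product using that $A$ is finite free over $A^W$. For (4) you use that $S_\infty/(\varpi^m,\mathfrak{a}^N)$ is a quotient of $\cO[\Delta_{Q_r}]$ once $r \geq m+N$.

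The one slip is your reason why each factor $C_\bullet\otimes_{\mathbb{T}^S}\mathbb{T}^S_{\mathfrak{m}}/(\varpi^m)$ is free. Chain-level localisation at $\mathfrak{m}$ need not be a direct summand, and replacing $C_\bullet$ by a finite complex would lose the chain-level Hecke action. The fix is immediate: the factor is flat over $\cO/(\varpi^m)$ because $C_\bullet$ is $\cO$-free and localisation is flat, and that is all your argument actually uses.
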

\begin{proof}
    (1) We compute
    \begin{align*}
        H_\ast( \mathbf{R}_\mathcal{F} \otimes_{\mathbf{R}} \prod_{r \geq 1} C_\bullet \otimes_{\mathbb{T}^S} \mathbb{T}^S_{\mathfrak{m}} / (\varpi^m)) &=  \mathbf{R}_\mathcal{F} \otimes_{\mathbf{R}} H_\ast( \prod_{r \geq 1} C_\bullet\otimes_{\mathbb{T}^S} \mathbb{T}^S_{\mathfrak{m}} / (\varpi^m)) \\
        &= \mathbf{R}_\mathcal{F} \otimes_{\mathbf{R}} \prod_{r \geq 1} H_\ast( C_\bullet/ (\varpi^m))_{\mathfrak{m}}.
    \end{align*} 
    Therefore, the fact that the natural map is a quasi-isomorphism follows from the fact that if $U$ is a finitely generated torsion $\cO$-module, then the localisation of the diagonal
    \[ U \to \mathbf{R}_\mathcal{F} \otimes_{\mathbf{R}} \prod_{r \geq 1} U \]
    is an isomorphism. To show that $M'(m)_\bullet$ is a complex of free $\cO / (\varpi^m)$-modules, we can apply Lemma \ref{lem_product_of_flat_is_flat} and \cite[\href{https://stacks.math.columbia.edu/tag/051G}{Lemma 051G}]{stacks-project}.

    (2) We make $C_{Q_r, 0, \bullet}$ into a $\mathbb{T}^S$-module using the isomorphism $\mathbb{T}^S \cong \mathbb{T}^{S \cup Q_r} \otimes_\cO A_{Q_r}^{W_{Q_r}}$ (in other words, by making the Hecke operators at $Q$ act through the Bernstein centre). We make $A$ act on $M_0(m)_\bullet$ via the $A_{Q_r}$-action on $C_{Q_r, 0, \bullet}$ and the diagonal map $A \to \prod_{r \geq 1} A_{Q_r}$. The freeness follows as for $M'(m)_\bullet$. 

    (3) The product induces morphisms
    \[ \mathbf{R}_{\mathcal{F}} \otimes_{\mathbf{R}} \prod_{r \geq 1} \left( C_\bullet \otimes_{\mathbb{T}^S} \mathbb{T}^S_{\mathfrak{m}} / (\varpi^m) \otimes_{A_{Q_r}^{W_{Q_r}}} A_{Q_r} \right) \rightleftarrows M_0(m)_\bullet \]
    whose composite in either direction is given by $\delta^{n!}$. (Here we remind the reader that, since the ultrafilter is non-principal, dropping finitely many terms from the product gives a result that is canonically isomorphic.) To give the claimed statement, we need to check that there is an isomorphism
    \begin{align*} &\mathbf{R}_{\mathcal{F}} \otimes_{\mathbf{R}} \prod_{r \geq 1} \left( C_\bullet\otimes_{\mathbb{T}^S} \mathbb{T}^S_{\mathfrak{m}} / (\varpi^m) \otimes_{A_{Q_r}^{W_{Q_r}}} A_{Q_r} \right)  \\
    \cong \  &  \mathbf{R}_{\mathcal{F}} \otimes_{\mathbf{R}} \left(  \prod_{r \geq 1} C_{ \bullet} \otimes_{\mathbb{T}^S} \mathbb{T}^S_{\mathfrak{m}}/ (\varpi^m) \right) \otimes_{A^W} A \end{align*}
    of complexes of $\mathbb{T}^S$-modules. This is true: since $A$ is a finite free $A^W$-module, we can take it inside the product. 

    (4) This is a simple consequence of the properties of $C_{Q_r, 1, \bullet}$ and Lemma \ref{lem_product_of_flat_is_flat}. Note that the condition $r \geq m + N$ implies that $S_\infty / (\mathfrak{a}^N, \varpi^m)$ is a quotient of $\cO[\Delta_{Q_r}]$.
\end{proof}

\para Finally, we define
\[ M'_\bullet  = \varprojlim_m M'(m)_\bullet, \]
\[ M_\bullet = \varprojlim_m M(m)_\bullet, \]
\[ M_{0, \bullet} = \varprojlim_m M_0(m)_\bullet, \]
and, if $N \geq 1$,
\[ M_{1, N, \bullet} = \varprojlim_m M_{1, N}(m)_\bullet. \]
\begin{proposition}\label{prop_patched_complexes_torsion_free_level}\leavevmode
    \begin{enumerate}
        \item There is an isomorphism $M'_\bullet \otimes_{A^W} A \to  M_\bullet$ of complexes of $\mathbb{T}^S \otimes_\cO A$-modules, flat as $\cO$-modules. Moreover, there is a quasi-isomorphism $C_\bullet \otimes_{\mathbb{T}^S} \mathbb{T}^S_{\mathfrak{m}} \to M'_\bullet$ of complexes of $\mathbb{T}^S$-modules.
        \item $M_{0, \bullet}$ is a complex of $\mathbb{T}^S \otimes_\cO A$-modules, flat as $\cO$-modules. Moreover, there are morphisms
        \[ \alpha : M_\bullet \to M_{0, \bullet}, \beta : M_{0, \bullet} \to M_\bullet \]
        of complexes of $\mathbb{T}^S$-modules, such that $\alpha \beta$ and $\beta \alpha$ are both equal to multiplication by $\delta^{n!}$.
        \item $M_{1, N, \bullet}$ is a complex of $\mathbb{T}^S \otimes_\cO A \otimes_\cO S_\infty / \mathfrak{a}^N$-modules, flat as $S_\infty / \mathfrak{a}^N$-modules. Moreover, there are canonical isomorphisms 
        \[ M_{1, N+1, \bullet} / (\mathfrak{a}^N) \cong M_{1, N, \bullet} \]
        (for $N \geq 1$) and
        \[ M_{1, 1, \bullet } / (\mathfrak{a}) \cong M_{0, \bullet}\]
        of complexes of $\mathbb{T}^S \otimes_\cO A \otimes_\cO S_\infty$-modules. 
    \end{enumerate}
\end{proposition}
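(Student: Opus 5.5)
The plan is to obtain each part by passing to the limit over $m$ in the corresponding part of Proposition \ref{prop_patched_complexes_torsion_level}, using Lemma \ref{lem_inverse_limit_of_flat_is_flat} termwise in each homological degree. First we check that the relevant systems are compatible. For $m' \geq m$ there are natural maps $M(m')_\bullet \to M(m)_\bullet$ (and similarly for $M'$, $M_0$, $M_{1,N}$), induced by reduction modulo $\varpi^m$ on each factor. For $M_0$ and $M_{1,N}$ this also requires dropping the finitely many indices $r$ with $m \leq r < m'$ (resp.\ $r < m' + N$). Since $\mathcal{F}$ is non-principal, this does not change $\mathbf{R}_\mathcal{F} \otimes_\mathbf{R} (-)$. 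These maps are surjective, and we claim they induce isomorphisms $M(m')_\bullet/(\varpi^m) \cong M(m)_\bullet$. This holds because reduction mod $\varpi^m$ commutes with products of free $\cO/(\varpi^{m'})$-modules (it is just $\varpi^m$-multiplication on each factor) and with the exact localisation $\mathbf{R} \to \mathbf{R}_\mathcal{F}$. The maps $\alpha, \beta$ of part (3) of Proposition \ref{prop_patched_complexes_torsion_level} are visibly compatible with these transition maps, since the $\alpha_{Q_r}, \beta_{Q_r}$ are defined integrally. Likewise the isomorphisms of part (4) are compatible with varying $m$.

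Given this, Lemma \ref{lem_inverse_limit_of_flat_is_flat}(2), applied with $R = \cO$ and $I = (\varpi)$, shows that $M'_\bullet$, $M_\bullet$ and $M_{0,\bullet}$ are flat over $\cO$ with $M_\bullet/(\varpi^m) \cong M(m)_\bullet$, and similarly for the others. For $M_{1,N,\bullet}$ we apply the same lemma with $R = S_\infty/\mathfrak{a}^N$ and $I = (\varpi)$; note that $S_\infty/(\mathfrak{a}^N, \varpi^m)$ is $R/I^m$. This gives flatness over $S_\infty/\mathfrak{a}^N$. Its last assertion, $M/JM \cong \varprojlim M_m/JM_m$ with $J = \mathfrak{a}^N$ or $\mathfrak{a}$, together with the isomorphisms of Proposition \ref{prop_patched_complexes_torsion_level}(4), yields the isomorphisms in (3). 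The relations $\alpha\beta = \beta\alpha = \delta^{n!}$ pass to the limit. The isomorphism $M'_\bullet \otimes_{A^W} A \cong M_\bullet$ follows because $A$ is finite free over $A^W$, so $\otimes_{A^W} A$ commutes with inverse limits.

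It remains to show that $C_\bullet \otimes_{\mathbb{T}^S} \mathbb{T}^S_\mathfrak{m} \to M'_\bullet$ is a quasi-isomorphism, and I expect this to be the main obstacle. The natural map factors through $\varprojlim_m C_\bullet \otimes \mathbb{T}^S_\mathfrak{m}/(\varpi^m)$, so two things are needed. First, the localised complex should be $\varpi$-adically complete up to quasi-isomorphism. Second, homology should commute with the limits. For the first point, I would use that $H_*(X^G_K, \mathcal{V}_\lambda^\vee)$ is finitely generated over $\cO$, and that localisation at $\mathfrak{m}$ cuts out a direct summand. Alternatively, I would replace $C_\bullet$ by a quasi-isomorphic bounded complex of finite free $\cO$-modules (the Borel--Serre compactification gives a finite model), for which completion is trivial. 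For the second point, I would use the Milnor sequence
\[ 0 \to R^1\varprojlim_m H_{*+1}(M'(m)_\bullet) \to H_*(M'_\bullet) \to \varprojlim_m H_*(M'(m)_\bullet) \to 0. \]
By part (1) of Proposition \ref{prop_patched_complexes_torsion_level}, $H_*(M'(m)_\bullet) = H_*(C_\bullet/(\varpi^m))_\mathfrak{m}$. These groups form a bounded system killed by $\varpi^m$, by finiteness of the homology and the universal coefficient sequence. So Lemma \ref{lem:bounded_sequence_limit} gives vanishing of $R^1\varprojlim$, and identifies the limit with the completed homology of $C_\bullet$, namely $H_*(C_\bullet)_\mathfrak{m}$. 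Comparing the two Milnor sequences for $C_\bullet$ and $M'_\bullet$ then gives the quasi-isomorphism.
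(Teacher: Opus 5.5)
Your proposal is correct. For the flatness statements, the relation $\alpha\beta=\beta\alpha=\delta^{n!}$, the isomorphism $M'_\bullet\otimes_{A^W}A\cong M_\bullet$ and the isomorphisms in (3), you argue as the paper does: pass to the limit in Proposition \ref{prop_patched_complexes_torsion_level} using Lemma \ref{lem_inverse_limit_of_flat_is_flat}. You are in fact more careful than the paper about the transition maps and about dropping finitely many indices.

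The genuine difference is in the quasi-isomorphism $\gamma\colon C_\bullet\otimes_{\mathbb{T}^S}\mathbb{T}^S_{\mathfrak{m}}\to M'_\bullet$. The paper argues via derived $\varpi$-adic completeness. The source is derived complete because its homology is finite over $\cO$, and the target because it is a complex of $\varpi$-adically complete modules, so $\mathrm{Cone}(\gamma)$ is derived complete. Since $M'_\bullet/(\varpi)\cong M'(1)_\bullet$, the complex $\mathrm{Cone}(\gamma)\otimes^{\mathbf{L}}_{\cO}k$ is acyclic, and derived Nakayama kills the cone. This uses only the case $m=1$ of Proposition \ref{prop_patched_complexes_torsion_level}(1) and computes no limits.

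Your route computes $H_*(M'_\bullet)$ directly via Milnor sequences and Mittag-Leffler (Lemma \ref{lem:bounded_sequence_limit}), the same tools the paper uses later in Proposition \ref{prop_computation_of_limit_cohomology}. It is more elementary and gives the explicit description $H_*(M'_\bullet)\cong\varprojlim_m H_*(C_\bullet/(\varpi^m))_{\mathfrak{m}}$.

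However, it needs one step you only assert: Lemma \ref{lem:bounded_sequence_limit} gives finiteness and the vanishing of $R^1\varprojlim$, not the identification of the limit with $H_*(C_\bullet)_{\mathfrak{m}}$. To get that identification, write $H_i=H_i(C_\bullet)_{\mathfrak{m}}$ and pass to the limit in the universal coefficient sequences $0\to H_i/\varpi^m\to H_i(C_\bullet/(\varpi^m))_{\mathfrak{m}}\to H_{i-1}[\varpi^m]\to 0$.
\begin{itemize}
\item The left-hand system has surjective transition maps, so the limit of these sequences is exact.
\item $\varprojlim_m H_i/\varpi^m=H_i$, because $H_i$ is finite over $\cO$.
\item $\varprojlim_m H_{i-1}[\varpi^m]$, with transition maps multiplication by $\varpi$, is the Tate module of a finite torsion module, hence zero.
\end{itemize}

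Separately, your ``second point'' needs even less than you use. Apply the five lemma to the natural Milnor sequences for $\varprojlim_m C_\bullet\otimes_{\mathbb{T}^S}\mathbb{T}^S_{\mathfrak{m}}/(\varpi^m)$ and for $\varprojlim_m M'(m)_\bullet$. This gives the quasi-isomorphism between these two limits without invoking any vanishing of $R^1\varprojlim$. Only the ``first point'' (completeness of the localised complex) genuinely uses finiteness of the homology.
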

\begin{proof}
    (1) The fact that $M'_\bullet \otimes_{A^W} A \to M_\bullet$ is an isomorphism is immediate from the fact that $A$ is a finite free $A^W$-module. The flatness of these complexes over $\cO$ follows from Lemma \ref{lem_inverse_limit_of_flat_is_flat}. It remains to check that the morphism $\gamma : C_\bullet\otimes_{\mathbb{T}^S} \mathbb{T}^S_{\mathfrak{m}} \to M'_\bullet$ arising by passage to the inverse limit is a quasi-isomorphism. This is more subtle. We consider the exact triangle in $\mathbf{D}(\cO)$
    \[ C_\bullet\otimes_{\mathbb{T}^S} \mathbb{T}^S_{\mathfrak{m}} \to M'_\bullet \to \mathrm{Cone}(\gamma) \to C_\bullet\otimes_{\mathbb{T}^S}\mathbb{T}^S_{\mathfrak{m}}[1]. \]
    We must show that $\mathrm{Cone}(\gamma)$ is acyclic. We claim that it is enough to show that $C_\bullet\otimes_{\mathbb{T}^S} \mathbb{T}^S_{\mathfrak{m}}$ and $M'_\bullet$ are both derived $(\varpi)$-adically complete objects of $\mathbf{D}(\cO)$, in the sense of \cite[\href{https://stacks.math.columbia.edu/tag/091N}{Section 091N}]{stacks-project}. Indeed, if this is the case, then $\mathrm{Cone}(\gamma)$ is also derived complete, as derived complete objects form a triangulated subcategory of $\mathbf{D}(\cO)$. On the other hand, $C_\bullet\otimes_{\mathbb{T}^S} \mathbb{T}^S_{\mathfrak{m}}$ and $M'_\bullet$ are bounded above complexes of flat $\cO$-modules, with $C_\bullet \otimes_{\mathbb{T}^S} \mathbb{T}^S_{\mathfrak{m}}/ (\varpi) \to M'_\bullet / (\varpi) \cong M'(1)_\bullet$ a quasi-isomorphism, by Lemma \ref{lem_inverse_limit_of_flat_is_flat} and Proposition \ref{prop_patched_complexes_torsion_level}, showing that $\mathrm{Cone}(\gamma) \otimes^{\mathbf{L}}_\cO k$ is acyclic. We can then apply \cite[\href{https://stacks.math.columbia.edu/tag/0G1U}{Lemma 0G1U}]{stacks-project} to deduce that $\mathrm{Cone}(\gamma)$ is itself acyclic. 

    By  \cite[\href{https://stacks.math.columbia.edu/tag/091U}{Lemma 091U}]{stacks-project}, derived complete objects form a weak Serre subcategory of the category of $\cO$-modules, and a complex in $\mathbf{D}(\cO)$ is derived complete if and only if its homology objects are derived complete. Since $H_\ast(C_\bullet\otimes_{\mathbb{T}^S} \mathbb{T}^S_{\mathfrak{m}})$ is a finite $\cO$-module, it is $(\varpi)$-adically complete, so derived complete by \cite[\href{https://stacks.math.columbia.edu/tag/091T}{Proposition 091T}]{stacks-project}. This shows that $C_\bullet\otimes_{\mathbb{T}^S} \mathbb{T}^S_{\mathfrak{m}}$ is derived complete. On the other hand, Lemma \ref{lem_inverse_limit_of_flat_is_flat} shows that $M'_\bullet$ is a complex of $(\varpi)$-adically complete modules,  hence of derived complete modules. Since derived complete objects form a weak Serre subcategory, this shows that the homology objects of $M'_\bullet$ are derived complete, hence that $M'_\bullet$ is derived complete, as required. 

    (2) This follows immediately from Proposition \ref{prop_patched_complexes_torsion_level} and Lemma \ref{lem_inverse_limit_of_flat_is_flat}. 

    (3) Again, this follows immediately from Proposition \ref{prop_patched_complexes_torsion_level} and Lemma \ref{lem_inverse_limit_of_flat_is_flat}. 
\end{proof}
\begin{lemma} \label{lem:delta_m0_bound}
    The sequence of modules $\delta^{n!} H_*( M_0(m)_\bullet )$, depending on $m$, is bounded, in the sense of Definition \ref{def_bounded_inverse_system}. 
\end{lemma}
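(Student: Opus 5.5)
Two routes are available: use the maps $\alpha,\beta$ of Proposition \ref{prop_patched_complexes_torsion_level}(3), or filter each level-$Q_r$ complex by Iwahori-type strata and apply Lemma \ref{lem:filtration_generator_bound}. The cleanest is via $\beta$. Since $\alpha\circ\beta = \delta^{n!}$ on $M_0(m)_\bullet$, on homology we get
\[ \delta^{n!} H_*(M_0(m)_\bullet) = \alpha(\beta(H_*(M_0(m)_\bullet))) \subseteq \alpha\bigl(H_*(M(m)_\bullet)\bigr), \]
so $\delta^{n!}H_*(M_0(m)_\bullet)$ is an $\cO$-submodule of the image of $H_*(M(m)_\bullet)$ under $\alpha$. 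By Lemma \ref{lem_submodule_of_O_module}, it then suffices to bound uniformly in $m$ the number of $\cO$-generators of $H_*(M(m)_\bullet)$, since images and submodules of $r$-generated $\cO$-modules are $r$-generated. Here the homology in each degree is treated separately, and only finitely many degrees occur, because $X^G_K$ has finite-dimensional homology.

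To bound $H_*(M(m)_\bullet)$, I use $M(m)_\bullet = M'(m)_\bullet \otimes_{A^W} A$ with $A$ finite free over $A^W$, say of rank $n!^q$. Since tensoring with a free module commutes with homology, $H_*(M(m)_\bullet) \cong H_*(M'(m)_\bullet)\otimes_{A^W} A$, which is generated as an $\cO$-module by $n!^q$ times the number of generators of $H_*(M'(m)_\bullet)$. By Proposition \ref{prop_patched_complexes_torsion_level}(1), $H_*(M'(m)_\bullet) \cong H_*(C_\bullet\otimes_{\mathbb{T}^S}\mathbb{T}^S_{\mathfrak m}/(\varpi^m))$.

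By the universal coefficient sequence, this last group is an extension of a submodule of $H_{*-1}(C_\bullet\otimes\mathbb{T}^S_{\mathfrak m})[\varpi^m]$ by a quotient of $H_*(C_\bullet\otimes\mathbb{T}^S_{\mathfrak m})$. Here I use that localisation at $\mathfrak m$ is exact and that $C_\bullet$ is $\cO$-free. By Proposition \ref{prop_complexes_compute_classical_cohomology}, the groups $H_*(X^G_K,\mathcal V_\lambda^\vee)_{\mathfrak m}$ are finitely generated $\cO$-modules independent of $m$. If they need $r_0$ generators in total, then $H_*(M'(m)_\bullet)$ needs at most $2r_0$ generators in each degree, for all $m$. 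Hence the sequence is bounded.

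The one point needing care is that $\alpha$ and $\beta$ are morphisms of complexes of $\mathbb{T}^S\otimes A$-modules, so they induce maps on homology compatible with $\delta$; Proposition \ref{prop_patched_complexes_torsion_level}(3) gives exactly this. I expect no real obstacle beyond this bookkeeping. The delicate bound is that $H_*(M'(m))$ does not grow with $m$, and that comes from finiteness of the classical homology at fixed level $K$, so it is uniform. If instead one wanted bounds at level $K_1(Q)$ (for $M_{1,N}$), one would need Lemma \ref{lem:filtration_generator_bound}, but that is not required here.
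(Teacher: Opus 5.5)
Your argument is correct and is essentially the paper's: $\delta^{n!}=\alpha\circ\beta$ factors through the finite module $H_*(M(m)_\bullet)$, and the number of $\cO$-generators of that module is bounded independently of $m$ by comparison with the fixed-level homology $H_*(X^G_K,\mathcal{V}_\lambda^\vee)_{\mathfrak m}$. The only difference is where the uniform bound comes from. The paper cites Proposition \ref{prop_patched_complexes_torsion_free_level} to bound $\dim_k H_*(M(m)_\bullet)/(\varpi)$, whereas you make the same bound explicit using Proposition \ref{prop_patched_complexes_torsion_level}(1), the freeness of $A$ over $A^W$, and the universal coefficient sequence.
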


\begin{proof}
    The map $\delta^{n!} = \alpha \circ \beta$ factors through the finite module $H_*(M(m)_\bullet)$. Moreover, the dimension of $H_*(M(m)_\bullet)/(\varpi)$ can be bounded independently of $m$, by Proposition \ref{prop_patched_complexes_torsion_free_level}.
\end{proof}

\begin{lemma}  \label{lem:delta_m1_bound}
The sequence of modules $\delta^{n! N} H_*( M_{1,N}(m)_\bullet )$, depending on $m$, is bounded.
\end{lemma}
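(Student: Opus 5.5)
We will apply Lemma \ref{lem:filtration_generator_bound} with $x = \delta^{n!}$ (more precisely, viewing $H = H_*(M_{1,N}(m)_\bullet)$ as an $\cO[x]$-module via $x \mapsto \delta^{n!}$) and the $\mathfrak{a}$-adic filtration. The difficulty is that a filtration of the complex $M_{1,N}(m)_\bullet$ does not directly give a filtration of its homology whose graded pieces are homology of the graded pieces. We therefore filter the homology by images: set $F^i = \im\bigl(H_*(\mathfrak{a}^i M_{1,N}(m)_\bullet) \to H_*(M_{1,N}(m)_\bullet)\bigr)$ for $0 \le i \le N$. These are $\mathbb{T}^S \otimes A$-submodules, with $F^0 = H$ and $F^N = 0$, since $\mathfrak{a}^N$ kills $M_{1,N}(m)_\bullet$.

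To bound the graded pieces, note that $F^i/F^{i+1}$ is a subquotient of $H_*(\mathfrak{a}^i M_{1,N}(m)_\bullet/\mathfrak{a}^{i+1}M_{1,N}(m)_\bullet)$. Indeed, $F^i/F^{i+1}$ is a quotient of $H_*(\mathfrak{a}^i M)$ by something containing the image of $H_*(\mathfrak{a}^{i+1}M)$, and hence it is a quotient of the image of $H_*(\mathfrak{a}^iM)$ in $H_*(\mathfrak{a}^iM/\mathfrak{a}^{i+1}M)$, by exactness of the long exact sequence. Since $M_{1,N}(m)_\bullet$ is free over $S_\infty/(\varpi^m,\mathfrak{a}^N)$ by Proposition \ref{prop_patched_complexes_torsion_level}(4), we have
\[ \mathfrak{a}^iM/\mathfrak{a}^{i+1}M \cong M_{1,N}(m)_\bullet/\mathfrak{a} \otimes_{S_\infty/\mathfrak{a}} \mathfrak{a}^i/\mathfrak{a}^{i+1} \cong M_0(m)_\bullet^{\oplus c_i}, \]
using the isomorphism $M_{1,1}(m)_\bullet/\mathfrak{a}\cong M_0(m)_\bullet$, where $c_i$ is the $\cO$-rank of $\mathfrak{a}^i/\mathfrak{a}^{i+1}$, which depends only on $i$ and $nq$. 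This isomorphism is compatible with the $\mathbb{T}^S\otimes A$-action. Hence $x(F^i/F^{i+1})$ is a subquotient of $(\delta^{n!}H_*(M_0(m)_\bullet))^{\oplus c_i}$. By Lemma \ref{lem:delta_m0_bound}, this is generated by $c_i r_0$ elements for some $r_0$ independent of $m$. By Lemma \ref{lem_submodule_of_O_module}, subquotients need no more generators. Lemma \ref{lem:filtration_generator_bound} then gives that $\delta^{n!N}H$ is generated by $r_0\sum_{i<N}c_i$ elements, which is independent of $m$.

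The main point to check carefully is the subquotient claim for $F^i/F^{i+1}$ together with the equivariance. We must verify that $\delta$ acts on $\mathfrak{a}^iM/\mathfrak{a}^{i+1}M$ through the $A$-action on $M_0(m)_\bullet$ under the identification above. This holds because the $A$-action commutes with the $S_\infty$-action, so tensoring with $\mathfrak{a}^i/\mathfrak{a}^{i+1}$ is $A$-equivariant. We also need that the homology of the direct sum decomposes accordingly, which is clear.
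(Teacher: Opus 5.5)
Your proof is correct and follows essentially the same route as the paper's: you filter $M_{1,N}(m)_\bullet$ by $\mathfrak{a}^i M_{1,N}(m)_\bullet$, use flatness to identify the graded pieces with $M_0(m)_\bullet^{\oplus c_i}$, bound $\delta^{n!}$ of their homology via Lemma \ref{lem:delta_m0_bound}, and conclude with Lemma \ref{lem:filtration_generator_bound}. The only difference is that the paper uses the spectral sequence of the filtered complex (with $E_\infty$ a subquotient of $E_1$), whereas you check directly from the long exact sequence that the image filtration on homology has graded pieces that are subquotients of the homology of the graded pieces, which is exactly what the paper needs from that spectral sequence.
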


\begin{proof}
    Consider the filtration $F^j = \mathfrak{a}^{j}/\mathfrak{a}^N$ of $S_\infty/(\varpi^m, \mathfrak{a}^N)$. By counting monomials in the generators of $S_\infty/(\varpi^m)$, we obtain isomorphisms $F^{j}/F^{j+1} = (S_\infty/(\varpi^m, \mathfrak{a}))^{d_j}$ with $d_j = \binom{nq + j - 1}{j}$. Since $M_{1,N}(m)_\bullet$ is a complex of flat $S_\infty/(\varpi^m, \mathfrak{a}^N)$-modules, we have the filtration
    \[
        0 = F^N(m)_\bullet \subset F^{N - 1}(m)_\bullet  \subset \cdots \subset F^0(m)_\bullet = M_{1,N}(m)_\bullet,
    \]
    where $F^{j}(m)_\bullet = F^j \otimes_{S_\infty/(\varpi^m, \mathfrak{a}^N)} M_{1,N}(m)_\bullet$. For each integer $j$, the graded piece $F^j(m)_\bullet/F^{j + 1}(m)_\bullet$ is isomorphic to 
    \[ M_{1, N}(m)_\bullet / (\mathfrak{a}) \otimes_{\cO} \mathfrak{a}^j / \mathfrak{a}^{j+1}, \]
    hence to  $M_0(m)_\bullet^{\oplus d_j}$, by Proposition \ref{prop_patched_complexes_torsion_level}.
    There is a spectral sequence
    \[
        E_1^{ij}(m) = H_{j -i}( F^{j}(m)_\bullet/F^{j+1}(m)_\bullet ) \implies H_{j - i}(M_{1,N}(m)_\bullet).
    \]
    It follows from Lemma \ref{lem:delta_m0_bound} that there exists a constant $C$ such that for each $i, j, m$ there is a surjection $\mathcal{O}^{d_j C} \to \delta^{n!} E_1^{ij}(m)$.
    Since $\mathcal{O}$ is a PID and $E_{\infty}^{ij}(m)$ is a subquotient of $E_1^{ij}(m)$, there exists a surjection $\mathcal{O}^{d_j C} \to \delta^{n!} E_\infty^{ij}(m)$. The spectral sequence induces a filtration of length $N$ on $H_*(M_{1,N}(m)_\bullet)$. Therefore, Lemma \ref{lem:filtration_generator_bound} implies that there exists a surjection
    $\mathcal{O}^{\sum d_j C} \to \delta^{n! N} H_*(M_{1,N}(m)_\bullet)$.
\end{proof}
\begin{corollary} \label{cor:patching_r1_torsion}
    We have 
    \[
        \delta^{n! N} R^1 \varprojlim_m H_*(M_{1,N}(m)_\bullet) = 0.
    \]
\end{corollary}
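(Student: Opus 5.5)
The plan is to compare the inverse system $H_m := H_*(M_{1,N}(m)_\bullet)$ with its subsystem $H'_m := \delta^{n! N} H_m$, and to compute $R^1\varprojlim$ via the long exact sequences attached to short exact sequences of inverse systems. Multiplication by $\delta^{n!N}$ is an endomorphism of each $H_m$, and it is compatible with the transition maps, since these are induced by $\mathbb{T}^S \otimes_\cO A$-linear maps of complexes $M_{1,N}(m+1)_\bullet \to M_{1,N}(m)_\bullet$. It therefore induces an endomorphism of the system $(H_m)_m$, and hence an endomorphism of $R^1\varprojlim_m H_m$. By functoriality, this endomorphism is just multiplication by $\delta^{n!N}$ on that module.

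\textbf{Factoring the endomorphism.} The endomorphism factors as
\[ (H_m)_m \twoheadrightarrow (H'_m)_m \hookrightarrow (H_m)_m, \]
so multiplication by $\delta^{n!N}$ on $R^1\varprojlim H_m$ factors through $R^1\varprojlim H'_m$. It therefore suffices to show $R^1 \varprojlim_m H'_m = 0$.

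\textbf{Vanishing for the subsystem.} By Lemma \ref{lem:delta_m1_bound}, the sequence $(H'_m)_m$ is bounded. Moreover, $\varpi^m$ kills $M_{1,N}(m)_\bullet$, since it is a complex of $S_\infty/(\varpi^m,\mathfrak{a}^N)$-modules. Hence $\varpi^m H'_m = 0$. The system $(H'_m)$ is an inverse system, because the transition maps commute with $\delta^{n!N}$. Lemma \ref{lem:bounded_sequence_limit}(2) then gives $R^1\varprojlim_m H'_m = 0$, which completes the argument.

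\textbf{Main point to check.} The only subtlety is confirming that $R^1\varprojlim$ is an additive functor on inverse systems, so that a factorization of morphisms of systems induces a factorization on $R^1\varprojlim$. This is immediate from its description as the cokernel of $\prod H_m \to \prod H_m$, $(h_m) \mapsto (h_m - \phi(h_{m+1}))$. If one is also worried about the grading, the argument can be run separately in each homological degree; Lemma \ref{lem:delta_m1_bound} bounds each degree a fortiori.
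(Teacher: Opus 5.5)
Your proposal is correct and matches the paper's proof: multiplication by $\delta^{n!N}$ on $R^1\varprojlim_m H_m$ factors through $R^1\varprojlim_m \delta^{n!N}H_m$, and that module vanishes by Lemma \ref{lem:delta_m1_bound} together with Lemma \ref{lem:bounded_sequence_limit}(2). Your extra checks (compatibility of $\delta^{n!N}$ with the transition maps, $\varpi^m H'_m = 0$, and functoriality of $R^1\varprojlim$) are the details the paper leaves implicit.
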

\begin{proof}
    The endomorphism $\delta^{n! N}$ of $R^1 \varprojlim_m H_*(M_{1,N}(m)_\bullet)$ factors through the module 
    \[
    R^1 \varprojlim_m \delta^{n! N} H_*(M_{1,N}(m)_\bullet),
    \]
    which vanishes by Lemmas \ref{lem:delta_m1_bound} and \ref{lem:bounded_sequence_limit}.
\end{proof}

\para Let $f : \mathbb{T}^S \to \cO$ be the homomorphism associated to the Hecke eigenvalues of $\iota^{-1}\pi^\infty$. We define a homomorphism $g_m : \mathbb{T}^S \otimes_\cO A^W \to \mathbf{R}_{\mathcal{F}} \otimes_{\mathbf{R}} \prod_{N \geq 1} \cO / (\varpi^m) \cong \cO / (\varpi^m)$ by $g_m(x \otimes y) = (f(x \iota_N(y)))_N$, where $\iota_N : A^W \cong A_{Q_N}^{W_{Q_N}} \to \mathbb{T}^S$. Then we define $g : \mathbb{T}^S \otimes_\cO A^W \to \cO$ by the formula $g = \varprojlim_m g_m$. Let $\mathfrak{q} = (\ker g, \mathfrak{a}) \leq \mathbb{T}^S \otimes_\cO A^W \otimes_\cO S_\infty$, and $\mathfrak{q}' = \ker f$. 
\begin{proposition}\label{prop_localised_patched_complexes}\leavevmode
    \begin{enumerate}
        \item There is an isomorphism $M'_{\mathfrak{q}, \bullet} \otimes_{A^W} A \to  M_{\mathfrak{q}, \bullet}$ of complexes of $\mathbb{T}^S \otimes_\cO A$-modules. Moreover, there are quasi-isomorphisms 
        \[ C_{\mathfrak{q}', \bullet} \to M'_{\mathfrak{q}', \bullet} \to M'_{\mathfrak{q}, \bullet} \]
        of complexes of $\mathbb{T}^S$-modules.
        \item Suppose that the Taylor--Wiles sets $Q_r$ are chosen to have the following property: there is $d \geq 1$ such that, for any $r \geq 1$, we have $\ord_{\varpi} f(\delta_{Q_r}) \leq d$. 
        Then the localisation $\alpha_{\mathfrak{q}}$ is a quasi-isomorphism 
        \[ \alpha_{\mathfrak{q}} : M_{\mathfrak{q}, \bullet} \to M_{0, \mathfrak{q}, \bullet} \]
        of complexes of $\mathbb{T}^S \otimes_\cO A$-modules. 
        \item There are canonical isomorphisms
        \[ M_{1, N+1, \mathfrak{q}, \bullet} / (\mathfrak{a}^N) \cong M_{1, N, \mathfrak{q}, \bullet} \]
        (for $N \geq 1$) and
        \[ M_{1, 1, \mathfrak{q}, \bullet } / (\mathfrak{a}) \cong M_{0, \mathfrak{q},  \bullet}\]
        of complexes of $\mathbb{T}^S \otimes_\cO A\otimes_\cO S_\infty$-modules. Moreover, $M_{1, N, \mathfrak{q}}$ is a bounded above complex of flat $S_{\infty, \mathfrak{a}} / (\mathfrak{a}^N)$-modules. 
    \end{enumerate}
\end{proposition}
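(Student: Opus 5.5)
Localisation is exact, so every part reduces to statements already proved before localising, together with the fact that $\delta$ becomes invertible after localising at $\mathfrak{q}$. Throughout, $\mathfrak{q}$ is a prime of $\mathbb{T}^S \otimes_\cO A^W \otimes_\cO S_\infty$. When we localise a complex on which only $\mathbb{T}^S$ (or $\mathbb{T}^S \otimes A^W$) acts, we mean localising at the contraction of $\mathfrak{q}$. For $M'_\bullet$, where $A^W$ acts through $\mathbb{T}^S$ via the identifications $\iota_N$, the contraction to $\mathbb{T}^S$ is $\mathfrak{q}' = \ker f$.

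For (1), the isomorphism $M'_{\mathfrak{q}} \otimes_{A^W} A \cong M_{\mathfrak{q}}$ is the localisation of Proposition \ref{prop_patched_complexes_torsion_free_level}(1). Since $A$ is finite free over $A^W$, the tensor product commutes with localisation.

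For the quasi-isomorphisms, $C_\bullet \otimes \mathbb{T}^S_{\mathfrak{m}} \to M'_\bullet$ is a quasi-isomorphism by Proposition \ref{prop_patched_complexes_torsion_free_level}(1). Since $\mathfrak{q}' \subset \mathfrak{m}$, localising it at $\mathfrak{q}'$ gives $C_{\mathfrak{q}'} \to M'_{\mathfrak{q}'}$. For the second map it suffices to check that on $M'$ the action of $\mathbb{T}^S \otimes A^W$ factors through $\mathbb{T}^S_{\mathfrak{m}}$ on homology. One then checks that the preimage of $\mathfrak{q}'$ is $\ker g$. Indeed, $g$ restricted to $\mathbb{T}^S$ is $f$, and on the $A^W$ factor it is compatible with $f$ via the ultraproduct definition of the $A^W$-action. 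So localising at $\ker g$ agrees with localising at $\mathfrak{q}'$, and $\mathfrak{a}$ acts trivially on these complexes.

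For (2), Proposition \ref{prop_patched_complexes_torsion_free_level}(2) gives $\alpha\beta = \beta\alpha = \delta^{n!}$. It therefore suffices to show that $\delta$ maps to a unit in the localisation at $\mathfrak{q}$, that is, $\delta \notin \mathfrak{q}$, or equivalently $g(\delta) \neq 0$. By construction $g(\delta) \equiv (f(\delta_{Q_N}))_N$ modulo $\varpi^m$ in the ultraproduct. Since $\ord_\varpi f(\delta_{Q_N}) \leq d$ for all $N$, taking $m > d$ shows $g(\delta)$ has valuation at most $d$, and in particular is nonzero. (Here $\delta$ lies in $A^W$, and $\mathfrak{q}$ contracts to the kernel of $g$ on $\mathbb{T}^S \otimes A^W$.) Then $\alpha_{\mathfrak{q}}$ and $\beta_{\mathfrak{q}}\delta^{-n!}$ are mutually inverse chain maps, so $\alpha_{\mathfrak{q}}$ is an isomorphism of complexes, hence a quasi-isomorphism.

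For (3), the isomorphisms follow by localising Proposition \ref{prop_patched_complexes_torsion_free_level}(3), since localisation commutes with quotients. For flatness, $M_{1,N}$ is a bounded above complex of flat $S_\infty/\mathfrak{a}^N$-modules. Localising at $\mathfrak{q}$, which restricts on $S_\infty$ to the prime $(\mathfrak{a},\varpi\text{-free part})$ containing $\mathfrak{a}$, gives modules flat over $S_{\infty,\mathfrak{a}}/\mathfrak{a}^N$, because a localisation of a flat module is flat over the localised ring. The subtle point is that localising at $\mathfrak{q}$ inverts elements of the big ring $\mathbb{T}^S \otimes A$, not only of $S_\infty$. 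But localisation at a multiplicative set $U$ of a ring $B$ acting on a flat $S_\infty$-module $M$ gives $U^{-1}B \otimes_B M$, a filtered colimit of copies of $M$, hence still flat over $S_\infty/\mathfrak{a}^N$. Since $\mathfrak{a}$ lies in $\mathfrak{q}$, elements of $S_\infty \setminus \mathfrak{a}$ that are outside $\mathfrak{q}$ are inverted, so the module is an $S_{\infty,\mathfrak{a}}$-module. The main obstacle I expect is the bookkeeping identifying contractions of $\mathfrak{q}$ in (1), i.e.\ checking that localising $M'$ at $\mathfrak{q}$ coincides with localising at $\mathfrak{q}'$.
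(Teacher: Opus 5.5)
Your outline follows the paper's proof, and parts (2) and (3) are fine as written. In (2), the bound $\ord_\varpi g(\delta)\le d$ makes $\delta$ a unit at $\mathfrak{q}$, so $\beta_{\mathfrak{q}}\delta^{-n!}$ inverts $\alpha_{\mathfrak{q}}$, exactly as in the paper. In (3), your filtered-colimit argument correctly supplies the flatness over $S_{\infty,\mathfrak{a}}/\mathfrak{a}^N$ that the paper leaves to ``localisation'' (and $\mathfrak{q}\cap S_\infty$ is exactly $\mathfrak{a}$).

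The gap is in (1), at the step you call bookkeeping, which is in fact the only non-formal content of the statement. You correctly reduce to showing that $\mathbb{T}^S\otimes_\cO A^W$ acts on $H_*(M'_\bullet)$ through the image of $\mathbb{T}^S$. You then take this for granted (``$A^W$ acts through $\mathbb{T}^S$ via the identifications $\iota_N$''), but it is not automatic. An element $a\in A^W$ acts on $\prod_r C_\bullet\otimes_{\mathbb{T}^S}\mathbb{T}^S_{\mathfrak{m}}/(\varpi^m)$ through $(\iota_r(a))_r\in\prod_r\mathbb{T}^S$. The $\iota_r$ vary with $r$, so there is a priori no single Hecke operator by which $a$ acts. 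The fact that $g|_{\mathbb{T}^S}=f$ only tells you $\mathfrak{q}\cap\mathbb{T}^S=\mathfrak{q}'$. To get $M'_{\mathfrak{q}'}\simeq M'_{\mathfrak{q}}$ you need that each $a\in A^W$ acts on the $f$-part of $H_*(M'_\bullet)\otimes_\cO E$ with the single generalised eigenvalue $g(a)$.

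The missing argument, which is what the paper supplies, uses finiteness twice.
First, fix $m$. The image of $\mathbb{T}^S$ in $\End_{\mathbf{D}(\cO/(\varpi^m))}(C_\bullet\otimes_{\mathbb{T}^S}\mathbb{T}^S_{\mathfrak{m}}/(\varpi^m))$ is a finite set, so the ultrafilter singles out one Hecke endomorphism agreeing with $\iota_r(a)$ for $\mathcal{F}$-almost all $r$. Hence the image of $A^W$ in $\End_{\mathbf{D}}(M'_\bullet/(\varpi^m))$ lies in the image of $\mathbb{T}^S$.
Second, $M'_\bullet\simeq C_\bullet\otimes_{\mathbb{T}^S}\mathbb{T}^S_{\mathfrak{m}}$ has finite homology, so $\End_{\mathbf{D}(\cO)}(M'_\bullet)$ is a finite $\cO$-algebra. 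It follows that $(\mathbb{T}^S\otimes_\cO A^W)(M'_\bullet)=\varprojlim_m(\mathbb{T}^S\otimes_\cO A^W)(M'_\bullet/(\varpi^m))$, which gives $T:=(\mathbb{T}^S\otimes_\cO A^W)(M'_\bullet)=\mathbb{T}^S(M'_\bullet)=:T'$.
With this, $g$ factors through $T$ and equals the character induced by $f$. So $\mathfrak{q}$ and $\mathfrak{q}'$ are pullbacks of the same prime of $T=T'$, whence $T'_{\mathfrak{q}'}=T_{\mathfrak{q}}$ and $H_*(M'_{\mathfrak{q}'})=H_*(M'_{\mathfrak{q}})$.
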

\begin{proof}
    (1) Taking in hand Proposition \ref{prop_patched_complexes_torsion_free_level}, we see that the non-trivial thing to check is that the natural morphism of complexes $M'_{\mathfrak{q}', \bullet} \to M'_{\mathfrak{q}, \bullet}$ is a quasi-isomorphism. Let $T' = \mathbb{T}^S(M'_\bullet)$, $T = (\mathbb{T}^S \otimes_\cO A^W)(M'_\bullet)$. These are $\cO$-subalgebras of
    \[ \End_{\mathbf{D}(\cO)}(M'_\bullet), \]
    which is itself a finite (non-commutative) $\cO$-algebra, because $C_\bullet \cong M'_\bullet$ in $\mathbf{D}(\cO)$ has cohomology finite over $\cO$. We claim that $g$ factors through $T$. 

    To see this, we note that we in fact have $T = T'$. Indeed, we have (using the finiteness of cohomology)
    \[ (\mathbb{T}^S \otimes_\cO A^W)(M'_\bullet) = \varprojlim_m (\mathbb{T}^S \otimes_\cO A^W)(M'_\bullet / (\varpi^m) ), \]
    and the image of $A^W$ in $\End_{\mathbf{D}(\cO / (\varpi^m))}( M'_\bullet / (\varpi^m) )$ is contained in $\mathbb{T}^S$, since the action of $A^W$ can be identified, via the isomorphism
    \[ \End_{\mathbf{D}(\cO / (\varpi^m))}( M'_\bullet / (\varpi^m) ) \cong \End_{\mathbf{D}(\cO / (\varpi^m))}( C_\bullet / (\varpi^m) ), \]
    with the action of some $A_{Q_r}^{W_{Q_r}}$ ($r$ depending on $m$) via the map $\iota_r$. In particular, we can equivalently define $\mathfrak{q}$ as the pre-image in $\mathbb{T}^S \otimes_\cO A^W$ of the kernel of the map $T \to \cO$ associated to the Hecke eigenvalues of $f$ (here using that these appear in $H_\ast(C_\bullet) \otimes_\cO E$, by our choice of weight $\lambda$ and level subgroup $K$). Since we have $T'_{\mathfrak{q}'} = T_{\mathfrak{q}}$, it follows that we must also have $H_\ast(M'_{\mathfrak{q}'}) = H_\ast(M'_{\mathfrak{q}})$. 

    (2) The hypothesis implies that $\ord_\varpi g(\delta) \leq d$, hence that $\delta \not\in \mathfrak{q}$ and that $\delta$ is invertible as an endomorphism of the complexes $M_{\mathfrak{q},\bullet}$, $M_{0, \mathfrak{q}, \bullet}$ (hence certainly a quasi-isomorphism). 

    (3) This follows immediately from Proposition \ref{prop_patched_complexes_torsion_free_level}, by localisation.
\end{proof}
\begin{proposition}\label{prop_computation_of_limit_cohomology}
    Let $N \geq 1$, and suppose that there is $d \geq 1$ such that for all $r \geq 1$, $\ord_{\varpi} f(\delta_{Q_r}) \leq d$. Then there are isomorphisms
    \[ H_\ast(M_{1, N, \bullet})_{\mathfrak{q}} \cong \left( \varprojlim_m H_\ast(M_{1, N, \bullet}(m)) \right)_{\mathfrak{q}} \cong \left( \varprojlim_m \delta^{n! N} H_\ast(M_{1, N, \bullet}(m)) \right)_{\mathfrak{q}} \]
    of $\mathbb{T}^S \otimes_\cO A^W \otimes_\cO S_\infty$-modules. 
\end{proposition}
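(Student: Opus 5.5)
The plan is to prove the two isomorphisms separately. Both rest on the fact that $\delta$ becomes a unit after localising at $\mathfrak{q}$: the hypothesis gives $\ord_\varpi g(\delta) \leq d$, so $\delta \notin \mathfrak{q}$, exactly as in the proof of Proposition \ref{prop_localised_patched_complexes}(2).

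For the first isomorphism, I will use the Milnor exact sequence for the inverse limit of complexes. Set $M_{1,N,\bullet} = \varprojlim_m M_{1,N}(m)_\bullet$. The transition maps are surjective in each degree, since by Proposition \ref{prop_patched_complexes_torsion_level}(4) and Lemma \ref{lem_inverse_limit_of_flat_is_flat} they are reductions modulo $\varpi^m$ of flat modules. So for each degree $i$ there is a short exact sequence
\[ 0 \to R^1\varprojlim_m H_{i+1}(M_{1,N}(m)_\bullet) \to H_i(M_{1,N,\bullet}) \to \varprojlim_m H_i(M_{1,N}(m)_\bullet) \to 0 \]
of $\mathbb{T}^S \otimes_\cO A^W \otimes_\cO S_\infty$-modules. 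This sequence is functorial, hence equivariant for the action of this ring. By Corollary \ref{cor:patching_r1_torsion}, the $R^1\varprojlim$ term is killed by $\delta^{n!N}$. Localisation is exact and $\delta$ is invertible at $\mathfrak{q}$, so this term vanishes after localising at $\mathfrak{q}$. That gives the first isomorphism.

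For the second isomorphism, I will use the inclusion of inverse systems $\delta^{n!N}H_*(M_{1,N}(m)_\bullet) \hookrightarrow H_*(M_{1,N}(m)_\bullet)$, whose cokernels are $H_*/\delta^{n!N}H_*$. The system $\delta^{n!N}H_*$ is bounded by Lemma \ref{lem:delta_m1_bound} and killed by $\varpi^m$ at level $m$. Lemma \ref{lem:bounded_sequence_limit} therefore gives that its $R^1\varprojlim$ vanishes. Since $\varprojlim$ is left exact, we get an exact sequence
\[ 0 \to \varprojlim_m \delta^{n!N}H_* \to \varprojlim_m H_* \to \varprojlim_m \bigl(H_*/\delta^{n!N}H_*\bigr) \to 0. \]
The last term is killed by $\delta^{n!N}$, because each of its terms is. Localising at $\mathfrak{q}$ kills it, which gives the second isomorphism.

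The main point needing care is that the inverse-limit statements hold as modules over $\mathbb{T}^S \otimes_\cO A^W \otimes_\cO S_\infty$, and not just over $\cO$, so that localisation at $\mathfrak{q}$ makes sense. The Milnor sequence and the limits above are natural, and the ring acts compatibly on every $M_{1,N}(m)_\bullet$, so this is fine. I also need to justify that multiplication by $\delta^{n!N}$ commutes with the $R^1\varprojlim$ construction in Corollary \ref{cor:patching_r1_torsion}. This holds because $\delta$ acts by an endomorphism of the whole inverse system.
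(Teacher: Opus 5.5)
Your proposal is correct and follows essentially the same route as the paper. The first isomorphism is the paper's Milnor-sequence argument combined with Corollary \ref{cor:patching_r1_torsion}, and for the second you use that the inclusion $\delta^{n!N}H_m \hookrightarrow H_m$ has $\delta^{n!N}$-torsion cokernel, which is the same idea as the paper's maps $i$ and $j$ whose composites are both $\delta^{n!N}$. The $R^1\varprojlim$-vanishing you invoke in that second step is harmless but unnecessary: left exactness of $\varprojlim$ already embeds the cokernel of $\varprojlim_m \delta^{n!N}H_m \to \varprojlim_m H_m$ into a $\delta^{n!N}$-torsion module.
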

\begin{proof}
    Let $H_m = H_\ast(M_{1, N, \bullet}(m))$, $H_\infty = \varprojlim_m H_m$, $H'_m = \delta^{n! N} H_m$, and $H'_\infty = \varprojlim_m H'_m$. By \cite[\href{https://stacks.math.columbia.edu/tag/0CQE}{Lemma 0CQE}]{stacks-project}, there is a short exact sequence
    \[ 0 \to R^1 \varprojlim H_m \to H_\ast(M_{1, N, \bullet}) \to H_\infty \to 0, \]
    and therefore (using Corollary \ref{cor:patching_r1_torsion}) an isomorphism 
    \[ H_\ast(M_{1, N, \bullet})_{\mathfrak{q}} \cong H_{\infty, \mathfrak{q}}. \]
    On the other hand, there are maps $i_m : H'_m \to H_m$, $j_m : H_m \to H'_m$ such that $i_m \circ j_m$ and $j_m \circ i_m$ are both given by multiplication by $\delta^{n! N}$. By passage to the limit, we obtain maps $i : H'_\infty \to H_\infty$ and $j : H_\infty \to H'_\infty$ such that $i \circ j$ and $j \circ i$ are both given by multiplication by $\delta^{n! N}$. Our hypothesis implies that $\delta^{n! N } \not\in \mathfrak{q}$, so these maps become isomorphisms after localisation at $\mathfrak{q}$. This completes the proof. 
\end{proof}

\para This concludes our construction of patched complexes. We now discuss their relation to deformation rings. Let $D$ denote the group determinant of $G_{F, S}$ over $\cO$ associated to $\rho$, and let $\overline{D} = D \text{ mod } (\varpi)$. We write $R = R_{\overline{D}, S}$ for the associated pseudodeformation ring, as in \S \ref{sec_galois_theory}. Choose $a \leq b$ such that all the Hodge--Tate weights of $\rho$ lie in the interval $[a, b]$. Then $R$ comes equipped with the ideal $I^{ss} = I^{[a, b]}_S \leq R$, with the property that $R / I^{ss}$ classifies pseudodeformations that are semistable with Hodge--Tate weights in the interval $[a, b]$. 

We write $R_r$ for the quotient of $R_{\overline{D}, S \cup Q_r}$ classifying pseudodeformations that factor through the maximal quotient of $G_{F, S \cup Q_r}$ over which each decomposition group $G_{F_v}$ ($v \in Q_r$) is abelian; and define $I^{ss}_r = I^{[a, b]}_{S \cup Q_r} R_r$. Invoking Proposition \ref{prop_existence_of_TW_primes}, we can assume that the Taylor--Wiles data $(Q_r, ( \alpha_{v, 1}, \dots, \alpha_{v, n})_{v \in Q_r})$ have been chosen so that the following condition is satisfied: there  are integers $d, g_0, q \geq 0$ such that
\begin{itemize}
    \item For any $r \geq 1$, $\ord_{\varpi} f(\delta_{Q_r}) \leq d$ and $|Q_r| = q$.
    \item Let $\mathfrak{p} = \ker(R \to \cO)$ denote the prime ideal associated to the group determinant of $\rho$, and let $\mathfrak{p}_r = \ker(R_r \to \cO)$ denote its pullback to $R_r$. Then there is a homomorphism of $\cO$-modules $\cO^{qn -  n [ F^+ : \mathbb{Q} ] + 1} \to \mathfrak{p}_r / (\mathfrak{p}_r^2, I^{ss}_r)$ with cokernel a quotient of $(\cO / (\varpi^d))^{g_0}$.
    \item For any $v \in Q_r$, we have $v^c \not\in S$. 
\end{itemize}
For each $r \geq 1$ and $v \in Q_r$, we define $n$-dimensional group determinants of $W_{F_v}$ as follows:
\begin{itemize}
    \item We write $\alpha_v$ for the restriction of the tautological group determinant over $R_r$ from $G_{F, S \cup Q_r}$ to $W_{F_v}$.
    \item We write $\gamma_v$ for the group determinant over $\mathbb{T}^S[\Delta_{Q_r}]$ associated to the representation $\oplus_{i=1}^n t_{v, i} \circ \Art_{F_v}^{-1}$. 
\end{itemize}
The following theorem summarises what we need about the existence of Galois representations associated to the cohomology of the locally symmetric spaces $X^G_K$.
\begin{theorem}\label{thm_existence_of_Galois_action}
    We can find $N_1, N_2 \geq 0$ with the following property: for any $r \geq 1$, there is an ideal 
    \[ J_r \leq (\mathbb{T}^S[\Delta_{Q_r}])(C_{Q_r, 1, \bullet} \otimes_{\mathbb{T}^S} \mathbb{T}^S_{\mathfrak{m}})\]
    such that $J_r^{N_1} = 0$, and a map $R_r \to (\mathbb{T}^S[\Delta_{Q_r}])(C_{Q_r, 1, \bullet} \otimes_{\mathbb{T}^S} \mathbb{T}^S_{\mathfrak{m}}) / J_r$ satisfying the following conditions:
    \begin{enumerate}
        \item For any finite place $v \not\in S \cup Q_r$ of $F$, the image of the characteristic polynomial of $\Frob_v$ equals $P_v(X)$, the Hecke polynomial defined in \cite[(2.2.5)]{10author}.
        \item For any $v \in Q_r$, the pushforward of $\alpha_v$ equals the pushforward of $\gamma_v$.
        \item There is an operator $\delta_\ell \in \mathcal{H}(\GL_n(F_\ell), \GL_n(\cO_{F_\ell}))$,  not depending on $r$, and with a non-zero eigenvalue on $\pi^\infty$, such that
        \[ (\delta_\ell)^{N_2} I^{ss}_r (\mathbb{T}^S[\Delta_{Q_r}])(C_{Q_r, 1, \bullet} \otimes_{\mathbb{T}^S} \mathbb{T}^S_{\mathfrak{m}}) / J_r = 0. \]
    \end{enumerate}
\end{theorem}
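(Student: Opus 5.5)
This theorem is essentially a repackaging of the existence of Galois representations with nilpotent ideal (Scholze, 10author) together with the local-global compatibility results of the companion papers \cite{Hev25} (for $\ell \neq p$, giving (2)) and \cite{Aca25} (for $\ell = p$, giving (3)). I would assemble it in three steps.

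\textbf{Step 1: the determinant.} Write $\mathbb{T}_r = (\mathbb{T}^S[\Delta_{Q_r}])(C_{Q_r, 1, \bullet} \otimes_{\mathbb{T}^S} \mathbb{T}^S_{\mathfrak{m}})$. This is a finite local $\cO$-algebra, since $C_{Q_r,1,\bullet}$ has finitely generated homology. By \cite[Theorem 2.3.7]{10author} (or Scholze's construction, upgraded to derived Hecke algebras as in \cite{10author, Kha17}), there is an ideal $J'_r$ with $(J'_r)^{N_0} = 0$, where $N_0$ depends only on $n$ and $[F:\mathbb{Q}]$. Over $\mathbb{T}_r / J'_r$ there is a continuous $n$-dimensional group determinant of $G_{F, S \cup Q_r}$ whose Frobenius characteristic polynomials at $v \notin S \cup Q_r$ are the $P_v(X)$. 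Reducing modulo $\mathfrak{m}$ recovers $\overline{D}$, so we obtain a map $R_{\overline{D}, S \cup Q_r} \to \mathbb{T}_r / J'_r$ satisfying (1).

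\textbf{Step 2: local-global compatibility at $Q_r$.} Apply \cite{Hev25} at $v \in Q_r$, where $K_1(Q_r)_v = \Iw_{v,1}$. This is possible because hypothesis (5) and the choice of $Q_r$ ensure that the residue characteristics are split in an imaginary quadratic subfield. It shows that, after enlarging the nilpotent ideal to some $J''_r$ with bounded nilpotence exponent, the restriction of the determinant to $W_{F_v}$ agrees with $\gamma_v$, which is built from the Bernstein-centre characters $t_{v,i}$. In particular $\alpha_v$ is abelian, so the map factors through $R_r$ and (2) holds.

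\textbf{Step 3: semistability at $p$.} Use \cite{Aca25}. Hypotheses (5) and (6) are exactly its conditions. The factor $\delta_\ell$ enters because the degree-shifting argument passes through the boundary of a unitary Shimura variety; one localises away from non-generic contributions using a Hecke operator at the split prime $\ell$. The eigenvalue of $\delta_\ell$ on $\pi^\infty$ is non-zero by our twist making $\rho|_{G_{F_v}} \oplus (\rho^{c,\vee}|_{G_{F_v}} \otimes \epsilon^{1-2n})$ generic at $v | \ell$. The output is that $\delta_\ell^{N_2}$ kills the obstruction for $R_r \to \mathbb{T}_r / J_r$ to factor through $R_r / I^{ss}_r$, with Hodge--Tate weights in $[a,b]$ determined by $\lambda$, for a suitable $J_r \supseteq J''_r$. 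Concretely, after multiplying by $\delta_\ell^{N_2}$, the algebra $\mathbb{T}_r / J_r$ embeds into a Cayley--Hamilton algebra whose finite-level quotients are subquotients of lattices in semistable representations, as in the definition of $\Def^{[a,b]}$. The Iwahori level at $p$, together with hypothesis (1), is what gives semistability rather than only potential semistability.

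\textbf{Main obstacle: uniformity in $r$.} The key point is that $N_1$ and $N_2$ are independent of $r$. I expect this to be the main difficulty. I would handle it by observing that the exponents in all three inputs depend only on $n$, $[F:\mathbb{Q}]$, and the number of degree-shifting and boundary steps. None of these depends on the auxiliary level at $Q_r$, which is prime to $p$, $\ell$ and $S$. Finally, take $J_r$ to be the sum of the nilpotent ideals from the three steps. A sum of boundedly nilpotent ideals in a commutative ring is boundedly nilpotent, so $N_1$ can be taken to be the sum of the individual exponents.
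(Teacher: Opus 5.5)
Your route is the same as the paper's, whose proof is essentially a citation. \cite[Theorem 1.6]{Hev25}, applied with $S_{\text{avoid}} = S_\ell(F)$ and $S_{\text{bad}} = S \cup Q_r$, directly supplies the boundedly nilpotent $J_r$, the map \emph{out of $R_r$}, and properties (1) and (2). Then \cite[Proposition 5.2.8, Lemma 5.2.7]{Aca25} lets one enlarge $J_r$, keeping its nilpotence degree bounded, so that (3) holds. There $\delta_\ell$ is the operator $T_n^2 p^{N_2}$, which contains a power of $p$. So \cite{Aca25} gives semistability only up to such a factor, not the integral Cayley--Hamilton embedding in your ``concretely'' sentence; this is harmless because it is only used after localizing at a characteristic-zero prime. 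However, the one step where you supply your own argument is not valid: from (2) you conclude that ``$\alpha_v$ is abelian, so the map factors through $R_r$''.

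Property (2) is an identity of determinants of $W_{F_v}$, and such a determinant only sees the semisimplification of the local representation. By definition, $R_r$ requires the \emph{global} determinant to factor through the quotient of $G_{F, S \cup Q_r}$ by the closed normal subgroup generated by commutators in $G_{F_v}$. That is, $ghg^{-1}h^{-1} - 1$ must lie in the kernel of the global determinant for all $g, h \in G_{F_v}$, and this does not follow. Take a semisimple representation $\rho'$ of $G_{F, S \cup Q_r}$ over $\overline{\mathbb{Q}}_p$ whose restriction to $G_{F_v}$ is a non-split (Steinberg-type) extension of two characters with ratio $\epsilon$. Then $\det \rho'|_{W_{F_v}}$ is the determinant of an abelian representation. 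Yet $\ker(\det \rho') = \ker \rho'$ since $\rho'$ is semisimple, and $\rho'(G_{F_v})$ is non-abelian because $\epsilon(\Frob_v) = q_v^{\pm 1} \neq 1$. You also cannot fall back on the usual Taylor--Wiles argument, where a regular semisimple $\overline{\rho}(\Frob_v)$ forces every local lift to split into characters: here the $\alpha_{v,i}$ are only required to be distinct in $\cO$, not modulo $\varpi$. The point is not cosmetic either. Genuine commutativity of the local image, not just the determinant identity, is what Lemma \ref{lem_separation_of_characters}(3) uses to split $\rho^p$ on $\langle f_i, f_{i+q} \rangle$ into characters. In the paper, the factorization through $R_r$ is part of what is taken from \cite[Theorem 1.6]{Hev25}. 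To prove it yourself you would need information about the image of $G_{F_v}$ in a Cayley--Hamilton algebra for the determinant. For instance, over finite Artinian coefficients the relevant part of tame inertia acts through an element $\sigma$ of the $p$-group $1 + \mathrm{rad}$. The relation $\Frob_v \sigma \Frob_v^{-1} = \sigma^{q_v^{\pm 1}}$ then forces commutativity once $q_v - 1$ is divisible by a large enough power of $p$ relative to the coefficients. That is a different argument from the one you give.
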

\begin{proof}
    It follows from \cite[Theorem 1.6]{Hev25} (applied with $S_{\text{avoid}} = S_\ell(F)$, $S_{\text{bad}} = S \cup Q_r$) that we can find an ideal $J_r$ of bounded nilpotence degree and a map $R_r \to (\mathbb{T}^S[\Delta_{Q_r}])(C_{Q_r, 1, \bullet} \otimes_{\mathbb{T}^S} \mathbb{T}^S_{\mathfrak{m}}) / J_r$ satisfying (1) and (2). It follows from \cite[Proposition 5.2.8, Lemma 5.2.7]{Aca25} that, after possibly enlarging $J_r$ while keeping the nilpotence degree bounded, we can further assume $\delta_\ell I_r^{ss} (\mathbb{T}^S[\Delta_{Q_r}])(C_{Q_r, 1, \bullet} \otimes_{\mathbb{T}^S} \mathbb{T}^S_{\mathfrak{m}}) / J_r = 0$, where $\delta_\ell$ is the operator denoted $T_n^2 p^{N_2}$ in the statement of \cite[Lemma 5.2.7]{Aca25}, and $T_n$ is as defined in \cite[\S 5.4.2]{Aca25}. It has a non-zero eigenvalue on $\pi^\infty$ by construction. This completes the proof.
\end{proof}

\para Our goal is to deduce a corresponding statement for the patched deformation ring, that we now introduce. Let us define for any $m \geq 1$
\[ R^p_m = \mathbf{R}_{\mathcal{F}} \otimes_{\mathbf{R}} \prod_{r \geq 1} R_r / \mathfrak{m}_{R_r}^m, \]
and 
\[ R^p = \varprojlim R^p_m. \]
Let $I^p_m \leq R^p_m$ denote the image of the ideal $\prod_{r \geq 1} I^{ss}_r$, and let $I^p = \varprojlim_m I^p_m$.
\begin{lemma}\label{lem_properties_of_patched_deformation_ring}\leavevmode
\begin{enumerate}
    \item $R^p$ is a complete Noetherian local $\cO$-algebra.
    \item The image of $I^p$ under the natural homomorphism $R^p \to R$ is equal to $I^{ss}$.
    \item Let $\mathfrak{p}^p \leq R^p$ denote the pre-image of $\mathfrak{p} \leq R$. Then there is a surjective $E$-algebra homomorphism $E\llbracket X_1, \dots, X_{nq -  n [ F^+ : \mathbb{Q} ] + 1} \rrbracket \to \widehat{R}^p_{\mathfrak{p}^p} / (I^p)$.
\end{enumerate}
\end{lemma}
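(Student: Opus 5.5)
For (1), I will use Lemma \ref{lem_uniform_bound_on_tangent_space}. Since every $R_r$ is a quotient of $R_{\overline{D}, S \cup Q_r}$ and $|Q_r| = q$, there are surjections $\cO\llbracket X_1, \dots, X_g \rrbracket \to R_r$ with $g = g(q)$. Hence $R_r / \mathfrak{m}_{R_r}^m$ is a quotient of $\Lambda_m := \cO\llbracket X_1,\dots,X_g\rrbracket/(X_1, \dots, X_g, \varpi)^m$, which is a finite ring. The ultraproduct of quotients of a fixed finite ring is again a quotient of that ring. Indeed, only finitely many ideals of $\Lambda_m$ exist, so the ideal $\ker(\Lambda_m \to R_r/\mathfrak{m}^m_{R_r})$ is constant for $r$ in a set belonging to $\mathcal{F}$. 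On that set we can also choose the surjections compatibly in $m$ (a diagonal choice of index sets, as in Pan's argument). This makes $R^p_m$ a quotient of $\Lambda_m$ with surjective transition maps, and $R^p = \varprojlim R^p_m$ a quotient of $\cO\llbracket X_1, \dots, X_g\rrbracket$. It is therefore complete Noetherian local with residue field $k$.

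For (2), the surjections $R_r \to R$ are compatible with $I^{ss}_r \mapsto I^{ss}$, using the identity $I^{[a,b]}_{S\cup Q}R_{\overline{D},S} = I^{[a,b]}_S$ recorded above. Passing to ultraproducts modulo $\mathfrak{m}^m$ gives $R^p_m \to R/\mathfrak{m}_R^m$, under which $I^p_m$ maps onto the image of $I^{ss}$. Taking inverse limits, surjectivity survives because all the modules involved are finite. So the image of $I^p$ is $I^{ss}$.

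For (3), I transport the tangent space bound. The maps $\cO^{nq - n[F^+:\mathbb{Q}]+1} \to \mathfrak{p}_r/(\mathfrak{p}_r^2, I^{ss}_r)$ have cokernel killed by $\varpi^d$ and generated by $g_0$ elements. Choose lifts $y_{r,1},\dots,y_{r,s} \in \mathfrak{p}_r$, where $s = nq - n[F^+:\mathbb{Q}]+1$. These define elements $y_i \in \mathfrak{p}^p$ via the ultraproduct and limit; they are well defined modulo $\mathfrak{m}^m$ for each $m$ and compatible. The main obstacle is showing the uniform statement survives the limit: $\mathfrak{p}^p/((\mathfrak{p}^p)^2, I^p, y_1,\dots,y_s)$ should be killed by $\varpi^d$ (and be finitely generated). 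I would handle this at finite level. The relation $\varpi^d \mathfrak{p}_r \subset (\mathfrak{p}_r^2, I^{ss}_r, y_{r,\bullet})$ persists in the quotients by $\mathfrak{m}^m$ and in their ultraproduct, since it is a finite relation in a uniformly finite setting. It then passes to the limit by compactness, because everything is a quotient of finite length $\Lambda_m$-modules with Mittag-Leffler properties, in the spirit of Lemma \ref{lem:bounded_sequence_limit}. One also needs $\mathfrak{p}^p$ to agree with the limit of the $\mathfrak{p}_r$-ultraproducts. This holds because $\mathfrak{p}_r$ is the kernel of a map to $\cO$ compatible with $R_r \to R \to \cO$.

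Finally, invert $p$ and complete. Then $\widehat{R}^p_{\mathfrak{p}^p}/(I^p)$ is a complete local Noetherian $E$-algebra with residue field $E$ (since $R^p/\mathfrak{p}^p = \cO$). Its cotangent space is spanned by the images of $y_1,\dots,y_s$, because the $\varpi^d$-torsion cokernel vanishes after inverting $\varpi$. Sending $X_i \mapsto y_i$ then gives a map that is surjective on cotangent spaces, hence surjective by the complete Nakayama lemma.
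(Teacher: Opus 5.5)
Your proposal is correct and follows essentially the same route as the paper: a uniform presentation of the $R_r$ as quotients of $\cO\llbracket X_1, \dots, X_g \rrbracket$ for (1); compatibility of the ultraproduct with the surjections $R_r \to R$ and $I^{ss}_r \mapsto I^{ss}$ for (2); and, for (3), transport of the finite-level containment $\varpi^d \mathfrak{p}_r \subset (\mathfrak{p}_r^2, I^{ss}_r, y_{r,1}, \dots, y_{r,s})$ through the ultraproduct and the inverse limit, followed by inverting $\varpi$ and applying Nakayama's lemma in the complete local ring $\widehat{R}^p_{\mathfrak{p}^p}/(I^p)$. You keep the $(\mathfrak{p}^p)^2$ term and justify it by the uniform bound on generators of ideals in the finite rings $R_r/\mathfrak{m}_{R_r}^m$, which is what that step requires; the paper's written argument uses only finite generation of $(x_1, \dots, x_s)$ and drops the square term, even though the finite-level input does not control it.
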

\begin{proof}
        (1) The rings $R_r$ may be presented as quotients of $\cO \llbracket X_1, \dots, X_{g_0} \rrbracket$ (for a fixed $g_0$, independent of $r$). Therefore each ring $R^p_m$ is an Artinian local $\cO$-algebra, and $R^p$ is a complete Noetherian local $\cO$-algebra.

    (2) It suffices to show that $I^{p} R / (\mathfrak{m}_{R_r}^m) = I^{ss} R / (\mathfrak{m}_{R_r}^m)$ for each $m \geq 1$. The image of $\prod_{r \geq 1} I^{ss}_r R_r / \mathfrak{m}_{R_r}^m$ in $\prod_{r \geq 1} R / \mathfrak{m}_{R}^m$ is equal to $\prod_{r \geq 1} I^{ss} R / \mathfrak{m}_{R}^m$, so this is true.

    (3) We recall that for each $r \geq 1$, there are elements \[
    x_{r, 1}, \dots, x_{r, nq -  n [ F^+ : \mathbb{Q} ] + 1} \in \mathfrak{p}_r\] such that $\mathfrak{p}_r/(\mathfrak{p}_r^2, I^{ss}_r, x_{r,1}, \dots, x_{r,nq - n[F^+:\mathbb{Q}]+1})$ is a quotient of $(\cO / (\varpi^d))^{g_0}$. Let $x_i$ denote the image of $(x_{r, i})$ in $R^p$. It suffices to show that the $\mathcal{O}$-module $\mathfrak{p}^p / (I^p, x_1, \dots, x_{nq -  n [ F^+ : \mathbb{Q} ] + 1})$ is torsion; or even that \[\varpi^d \mathfrak{p}^p R^p_m \subset (I^p, x_1, \dots, x_{nq -  n [ F^+ : \mathbb{Q} ] + 1}) R^p_m\] for every $m \geq 1$. 

    The ideal $\mathfrak{p}^p R^p_m$ is the image of the ideal $\prod_{r \geq 1} \mathfrak{p}_r R^p_m \leq \prod_{r \geq 1} R_r / \mathfrak{m}_{R_r}^m$. To prove the desired containment, it is therefore enough to show that $(I^p, x_1, \dots, x_{nq -  n [ F^+ : \mathbb{Q} ] + 1}) R^p_m$ equals the image of the ideal
    \[ \prod_{r \geq 1} (I^{ss}_r, x_{r, 1}, \dots, x_{r, nq -  n [ F^+ : \mathbb{Q} ] + 1}) \leq \prod_{r \geq 1} R_r / \mathfrak{m}_{R_r}^m. \]
    Since $I^p R^p_m$ is equal to the image of $\prod I^{ss}_r$ by definition, it suffices to show that $(x_1, \dots, x_{nq -  n [ F^+ : \mathbb{Q} ] + 1}) R^p_m$ is equal to the image of \[
    \prod_r (x_{r, 1}, \dots, x_{r, nq -  n [ F^+ : \mathbb{Q} ] + 1}) R_r / \mathfrak{m}_{R_r}^m;\] equivalently, that if $I = (x_1, \dots, x_{nq -  n [ F^+ : \mathbb{Q} ] + 1}) R^p_m$, then we have the equality
    \[ I (\prod_{r \geq 1} R_r / \mathfrak{m}_{R_r}^m) = \prod_{r \geq 1} (I R_r / \mathfrak{m}_{R_r}^m). \]
    However, this follows from the fact that $I$ is finitely generated. This completes the proof. 
\end{proof}

\para We need to explain what happens at Taylor--Wiles places. Fix, for each $r \geq 1$, an ordering of $Q_r$, and for each $v \in Q_r$, a surjection $\mathbb{Z}_p \to k(v)^\times(p)$. These choices determine a surjection $S_\infty \to \cO [ \Delta_{Q_r} ]$. We thus obtain, for each $r \geq 1$, a tuple $\alpha_{r, 1}, \dots, \alpha_{r, q}$ of $n$-dimensional group determinants of $\mathbb{Z} \times \mathbb{Z}_p$ over $R_r$. Similarly, we have characters $\gamma_{i}^{(j)} : \mathbb{Z} \times \mathbb{Z}_p \to (S_\infty \otimes_\cO A)^\times$ ($j = 1, \dots, n$) with associated $n$-dimensional group determinants $\gamma_i = \oplus_{j=1}^n \gamma_i^{(j)}$ ($i = 1, \dots, q$) over $S_\infty \otimes_\cO A$, whose pushforward to any $\mathbb{T}^S[\Delta_{Q_r}]$ coincides with $\gamma_{v_1}, \dots, \gamma_{v_q}$.

We can use the $\alpha_{r, i}$ to define group determinants $\alpha_{\infty, i}$ over $R^p$. Indeed, the product $\prod_{r \geq 1} \alpha_{r, i}$ gives an $n$-dimensional group determinant of $\mathbb{Z} \times \mathbb{Z}_p$ over $\prod_{r \geq 1} R_r / \mathfrak{m}_{R_r}^m$ for every $m \geq 1$, so passage to inverse limit and then localization gives the desired $\alpha_{\infty, i}$.
\begin{lemma}\label{lem_separation_of_characters}
Fix $1 \leq i \leq q$.
    \begin{enumerate}
        \item The pushforward of $\alpha_{\infty, i}$ to $R$ is inflated from the `unramified quotient' $\mathbb{Z} \times \mathbb{Z}_p \to \mathbb{Z}$. The image of the characteristic polynomial $\alpha_{\infty, i}(X - (1, 0))$ in $(R /\mathfrak{p})[X] = \cO[X]$ has distinct roots, contained in $\cO$, which come with a canonical ordering $x_i^{(1)}, \dots, x_i^{(n)}$.
        \item There are unique characters $\alpha_{\infty, i}^{(j)} : \mathbb{Z} \times \mathbb{Z}_p \to (\widehat{R}^p_{\mathfrak{p}^p})^\times$ ($i = 1, \dots, q$, $j = 1, \dots, n$) such that $\alpha_{\infty, i}$ is the group determinant of the representation $\oplus_{j=1}^n \alpha_{\infty, i}^{(j)}$ and $\alpha_{\infty, i}^{(j)}((1,0)) \equiv x_i^{(j)} \text{ mod }\mathfrak{p}^p$.
        \item Let $\widehat{S}_{\infty, \mathfrak{a}} \to \widehat{R}^p_{\mathfrak{p}^p}$ be the $E$-algebra homomorphism associated to the characters $\alpha_{\infty, i}^{(j)}$. Then  $\widehat{R}^p_{\mathfrak{p}^p} / (\mathfrak{a}) = \widehat{R}_{\mathfrak{p}}$.
        \item For any $s \geq 1$, the map $\mathbb{T}^S \otimes_\cO S_\infty \otimes_\cO E \to R^p_{\mathfrak{p}^p} / (\mathfrak{p}^p)^s$ is surjective. 
    \end{enumerate}
\end{lemma}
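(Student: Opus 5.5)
We prove the four parts in order: (1) sets up distinct roots, (2) uses them and Hensel's lemma to split the determinant, and (3), (4) follow from (2) together with the construction of $R^p$ as an ultraproduct.

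\textbf{Part (1).} The natural map $R^p \to R$ is induced, for each $m$, by the surjections $R_r \to R$ and the isomorphism $\mathbf{R}_{\mathcal{F}} \otimes_{\mathbf{R}} \prod_r R/\mathfrak{m}_R^m \cong R/\mathfrak{m}_R^m$. Through this map, $\alpha_{\infty,i}$ pushes forward to the restriction to $W_{F_{v}}$ of the universal group determinant over $R$, with $v$ the $i$-th place of $Q_r$. Since $v \notin S$, this is unramified, so the pushforward is inflated from $\mathbb{Z}$.

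Modulo $\mathfrak{p}$, the characteristic polynomial of $(1,0)$ is $\det(X-\rho(\Frob_{v}))$. It depends on $r$, so we compute it through the ultraproduct $\mathbf{R}_{\mathcal{F}}\otimes\prod_r \cO/\varpi^m \cong \cO/\varpi^m$ and then pass to the limit in $m$.

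The roots $\alpha_{v_r,j}$ lie in $\cO$, and $\sum_{j\neq k}\ord_\varpi(\alpha_{v_r,j}-\alpha_{v_r,k}) \le d$ uniformly in $r$. Hence the ultralimits $x_i^{(j)} = \lim_{\mathcal{F}} \alpha_{v_r,j}$ exist in $\cO$; compactness gives a limit along $\mathcal{F}$. These ultralimits are the roots of the limit polynomial, and they remain pairwise distinct because the valuations of their differences are bounded by $d$. The ordering is inherited from the Taylor--Wiles data.

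\textbf{Part (2).} Work over the complete local ring $\widehat{R}^p_{\mathfrak{p}^p}$, whose residue field is $E$ and in which $\mathfrak{p}^p$ is the maximal ideal. Let $P(X)$ be the characteristic polynomial of $\alpha_{\infty,i}((1,0))$. Modulo $\mathfrak{p}^p$ it has distinct roots in $E$, so Hensel's lemma factors it uniquely as $\prod_j (X - a_j)$ with $a_j \equiv x_i^{(j)}$.

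Let $\mathcal{A}$ be the Cayley--Hamilton algebra $\widehat{R}^p_{\mathfrak{p}^p}[\mathbb{Z}\times\mathbb{Z}_p]/\mathrm{CH}$. The orthogonal idempotents $e_j = \prod_{k\ne j}(\phi - a_k)/(a_j-a_k)$, with $\phi$ the image of $(1,0)$, decompose it. Since $\mathbb{Z}\times\mathbb{Z}_p$ is abelian, each $e_j$ is central. The standard theory of generalized matrix algebras / residually multiplicity-free pseudocharacters (Chenevier, or \cite{WWE19}) then shows that $D$ decomposes as a product of $1$-dimensional determinants, that is, characters $\alpha_{\infty,i}^{(j)}$ with $\alpha_{\infty,i}^{(j)}(1,0) = a_j$. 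Uniqueness follows because the labelling is forced by the residues $x_i^{(j)}$.

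The main subtlety in (2) is continuity in the $\mathbb{Z}_p$-variable. I would handle it by working with the group determinants modulo $(\mathfrak{p}^p)^s$ for each $s$ and passing to the limit.

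\textbf{Part (3).} The map $\widehat{S}_{\infty,\mathfrak{a}} \to \widehat{R}^p_{\mathfrak{p}^p}$ sends the $j$-th generator of the $i$-th copy of $\mathbb{Z}_p^n$ to $\alpha^{(j)}_{\infty,i}(0,1)$. Quotienting by $\mathfrak{a}$ therefore imposes exactly that every $\alpha_{\infty,i}$ be unramified. At each finite level, $R_r/(\text{unramified at }Q_r) = R_{\overline{D},S}$-type quotient $R$. This uses that the Taylor--Wiles condition gives a unique decomposition, so the unramified condition is cut out by the augmentation ideal, as in \cite{New23}. Passing through the ultraproduct and the limit, and using Lemma~\ref{lem_inverse_limit_of_flat_is_flat}-style compatibility with quotients by finitely generated ideals (as in the proof of Lemma~\ref{lem_properties_of_patched_deformation_ring}(3)), gives $R^p/\mathfrak{a}R^p \cong R$ near $\mathfrak{p}$. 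Completing at $\mathfrak{p}$ yields the claim.

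I expect (3) to be the main obstacle. The augmentation ideal on the $R^p$ side is only defined after localisation via the characters of (2), whereas finite-level statements use the tame inertia image directly. I would compare the two via $(\mathfrak{p}^p)^s$-truncations, where both descriptions agree.

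\textbf{Part (4).} By Nakayama for the complete local ring (or directly on the Artinian ring $R^p_{\mathfrak{p}^p}/(\mathfrak{p}^p)^s$), it suffices to show surjectivity onto the cotangent space modulo $\mathfrak{a}$ together with the images of $S_\infty$. By (3), $S_\infty$ accounts for $\mathfrak{a}$, and the remaining quotient is $\widehat{R}_\mathfrak{p}/\mathfrak{p}^s$. The latter is generated over $E$ by traces of Frobenius elements, by Chebotarev and the fact that pseudodeformation rings are generated by traces. Traces of Frobenius at $v\notin S\cup Q_r$ are images of Hecke operators by Theorem~\ref{thm_existence_of_Galois_action}(1), which is compatible with the way $\mathbb{T}^S$ maps to $R^p$.
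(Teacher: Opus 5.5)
Your parts (1) and (2) are fine and essentially match the paper, which proves them as in \cite{New23}. Your reduction in (4), via Chebotarev, to surjectivity of $\mathbb{T}^S \to R/\mathfrak{p}^s$ is also what the paper does. The gap is in (3), which is the substantive part and on which (4) depends.

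\textbf{Your finite-level step is not available.} The Taylor--Wiles condition is imposed only on $\rho$. So over $R_r$ (residue field $k$) the roots of $\det(X-\rho(\Frob_v))$ may coincide modulo $\varpi$, and $\overline{\rho}$ may be reducible. Consequently $\alpha_{r,i}$ need not split into characters, $R_r$ has no $S_\infty$-algebra structure, and there is no augmentation ideal at level $r$ to match with the unramified quotient.

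\textbf{The patched-level step is missing its two key inputs.} On the patched side, the characters of (2) only give you a \emph{local} condition at each $i$. To identify $\widehat{R}^p_{\mathfrak{p}^p}/(\mathfrak{a})$ with $\widehat{R}_{\mathfrak{p}}$, you need the \emph{global} pseudodeformation to be unramified at $Q$. But $R^p$ carries no determinant of a fixed profinite group in which this makes sense, since the groups $\Gal(M_r/F)$ vary with $r$. You would also need the unramified ideal to be generated by boundedly many elements, uniformly in $r$, in order to commute the quotient with the ultraproduct and the inverse limit. Nothing in your sketch supplies either input. The assertion that the two descriptions agree on $(\mathfrak{p}^p)^s$-truncations is exactly the claim to be proved.

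\textbf{How the paper proves (3).} It fixes surjections $\psi_r:\mathcal{G}_{2q+m_0}\to\Gal(M_r/F)$ from a free profinite group on a bounded number of generators. These send $f_1,\dots,f_q$ to the marked inertia generators and $f_{q+1},\dots,f_{2q}$ to the marked Frobenius lifts. The pseudodeformation rings $A_r$ of $\psi_r^\ast\overline{D}$ patch to $A^p\in\CNL_\cO$ by \cite{Che14}, and $A^p$ carries a continuous determinant $D^p$ of $\mathcal{G}_{2q+m_0}$. The ideals $\mathcal{I}_r$ cutting out factorisation through $\mathcal{G}_{2q+m_0}/\langle\langle f_1,\dots,f_q\rangle\rangle$ patch to a finitely generated ideal $\mathcal{I}^p$ with $R^p/\mathcal{I}^pR^p=R$. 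Hence $\widehat{R}^p_{\mathfrak{p}^p}/(\mathcal{I}^p)\cong\widehat{R}_{\mathfrak{p}}$.

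\textbf{The missing use of absolute irreducibility.} The remaining identity $\mathcal{I}^p\widehat{R}^p_{\mathfrak{p}^p}=\mathfrak{a}\widehat{R}^p_{\mathfrak{p}^p}$ uses the absolute irreducibility of $\rho\otimes_\cO E$, which your proposal never invokes. Since $D^p \bmod \mathfrak{p}^p$ is absolutely irreducible, $D^p$ over $\widehat{R}^p_{\mathfrak{p}^p}$ is the determinant of a genuine representation $\rho^p$ with the same kernel. The quotient by $\mathcal{I}^p$ is then the locus where $\rho^p(f_i)=1$ for all $i$. Because $\rho^p$ restricted to $\langle f_i,f_{i+q}\rangle$ is $\oplus_j\alpha^{(j)}_{\infty,i}$, this is exactly the locus where every $\alpha^{(j)}_{\infty,i}((0,1))=1$, i.e.\ where $\mathfrak{a}$ vanishes. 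Without passing to an honest representation at the characteristic-zero point, the local condition encoded by $\mathfrak{a}$ does not control the global kernel of the determinant.
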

\begin{proof}
The first two parts may be proved in exactly the same way as \cite[Lemma 4.23, 4.24]{New23}. For the third, we need to work harder. 

Let $F_1 / F$ denote the (finite) extension cut out by $\overline{\rho}$, and let $M / F_1$ be the maximal pro-$p$ extension contained in $F_S$. Then $\Gal(M/ F)$ is topologically finitely generated, and (\cite[Lemma 3.8]{Che14}) any pseudodeformation of $\overline{D}$, unramified outside of $S$, factors through $\Gal(M / F)$. Similarly, if $r \geq 1$, let $M_r / F_1$ be the maximal pro-$p$ extension contained in $F_{S \cup Q_r}^{Q_r-ab}$. Then $\Gal(M_r/ F)$ is topologically finitely generated, and we can find $m_0 \geq 0$ such that for any $r \geq 1$, it can be topologically generated by $m_0$ elements (cf. \cite[Lemma 3.28]{Tho15}).

Let $\mathcal{G}_{m_0}$ denote the free profinite group on $m_0$ elements, and choose for each $r \geq 1$ a continuous, surjective homomorphism $\mathcal{G}_{m_0} \to \Gal(M_r / F)$. We extend this to a surjection $\psi_r : \mathcal{G}_{2q + m_0} \to \Gal(M_r / F)$ by sending the first $q$ generators $f_1, \dots, f_q$ to the images of the elements $(0, 1) \in \mathbb{Z} \times \mathbb{Z}_p$ (in other words, to our marked generators for the tame inertia at the $i$th place of $Q_r$), and the second $q$ generators $f_{q+1}, \dots, f_{2q}$ to the images of the elements $(1, 0)$ (the marked Frobenius lifts). 

We now apply some results from \cite{Che14}. Let $A_r \in \CNL_\cO$ denote the object representing the functor of continuous pseudodeformations of $\psi_r^\ast \det \overline{\rho}$. 
Let $\mathcal{I}_r \leq A_r$ denote the ideal such that a group determinant factors through the quotient of $\mathcal{G}_{2q+m_0}$ by $f_1, \dots, f_q$ if and only if it factors through $A_r / \mathcal{I}_r$. By construction, there is a surjective homomorphism $A_r \to R_r$, and $R_r / \mathcal{I}_r R_r = R$.

Let $A^p_m = \mathbf{R}_{\mathcal{F}} \otimes_{\mathbf{R}} \prod_{r \geq 1} A_r / \mathfrak{m}_{A_r}^m$, $A^p = \varprojlim A^p_m$, $\mathcal{I}^p = \varprojlim_m \mathbf{R}_{\mathcal{F}} \otimes_{\mathbf{R}} \prod_{r \geq 1} \mathcal{I}_r A_r / \mathfrak{m}_{A_r}^m$. Then $A^p \in \CNL_\cO$, by \cite[Corollary 2.39]{Che14}, $\mathcal{I}^p \leq A^p$ is a finitely generated ideal, and $R^p$ is an $A^p$-algebra. Moreover, $A^p$ carries a continuous group determinant $D^p$ of $\mathcal{G}_{2q + m_0}$, and we can naturally identify $R^p / \mathcal{I}^p R^p = R$. Localising and completing at $\mathfrak{p}^p$, we see that the map $\widehat{R}^p_{\mathfrak{p}^p} \to \widehat{R}_{\mathfrak{p}}$ factors through an isomorphism $\widehat{R}^p_{\mathfrak{p}^p} / (\mathcal{I}^p) \cong \widehat{R}_{\mathfrak{p}}$. To prove the 3rd point of the lemma, it thus suffices to show that $\mathcal{I}^p \widehat{R}^p_{\mathfrak{p}^p} = \mathfrak{a} \widehat{R}^p_{\mathfrak{p}^p}$. 

To show this, note that $D^p \text{ mod }\mathfrak{p}$ is absolutely irreducible, so $D^p$ is the group determinant of a unique (up to equivalence) representation $\rho^p : \mathcal{G}_{2q+m_0} \to \GL_n(\widehat{R}^p_{\mathfrak{p}^p})$. The quotient $\widehat{R}^p_{\mathfrak{p}^p} / \mathcal{I}^p \widehat{R}^p_{\mathfrak{p}^p}$ may be interpreted as the maximal quotient over which $\rho^p$ factors through the quotient of $\mathcal{G}_{2q+m_0}$ by the closed subgroup normally generated by the elements $f_1, \dots, f_q$; in other words, the maximal quotient over which we have $\rho^p(f_i) = 1$ for each $i = 1, \dots, q$. On the other hand, by construction, the group determinant $\alpha_{\infty, i}$ ($i = 1, \dots, q$) of $\mathbb{Z} \times \mathbb{Z}_p$ may be realised as the restriction of $\rho^p$ to the subgroup generated by $f_i$ and $f_{i+q}$. Since this restriction is isomorphic to a direct sum of the characters $\alpha_{\infty, i}^{(1)}, \dots, \alpha_{\infty, i}^{(n)}$, the result follows. 

For the fourth part of the lemma, it suffices to show that  $\mathbb{T}^S \otimes_\cO E \to \widehat{R}^p_{\mathfrak{p}^p} / (\mathfrak{a}, (\mathfrak{p}^p)^s)$ is surjective. This is true because $\mathbb{T}^S \to R / \mathfrak{p}^s$ is surjective, by the Chebotarev density theorem (cf. \cite[Proposition 3.26]{Tho15}).
\end{proof}
We have defined a homomorphism $g : \mathbb{T}^S \otimes_\cO A^W \to \cO$. Let $\widetilde{g} : \mathbb{T}^S \otimes_\cO A \otimes_\cO S_\infty \to \cO$ be the extension of $g$ defined by sending $t_i^{(j)}$ to $x_i^{(j)}$ ($i = 1, \dots, q$, $j = 1, \dots, n$) and $\mathfrak{a}$ to $0$. Let $\widetilde{\mathfrak{q}} = \ker \widetilde{g}$. Then the localised complex $M_{1, N, \widetilde{\mathfrak{q}}, \bullet}$ is defined, and is in fact quasi-isomorphic to a direct summand of $M_{1, N, \mathfrak{q}, \bullet}$ in $\mathbf{D}(\widehat{S}_{\infty, \mathfrak{a}} / \mathfrak{a}^N)$. 
\begin{theorem}\label{thm_existence_of_pached_Galois_action}
     We can find an integer $N_3 \geq 0$ with the following property: for any $N \geq 1$, there is an ideal
    \[ K_N \leq (\mathbb{T}^S \otimes_\cO S_\infty \otimes_\cO A)(M_{1, N, \bullet})_{\widetilde{\mathfrak{q}}} \]
    such that $K_N^{N_3} = 0$, and a homomorphism of $S_\infty$-algebras: 
    \[ \widehat{R}^p_{\mathfrak{p}^p} / I^p \to (\mathbb{T}^S \otimes_\cO S_\infty \otimes_\cO A)(M_{1, N, \bullet})_{\widetilde{\mathfrak{q}}} / K_N \]
    satisfying the following conditions:
    \begin{enumerate}
        \item For each $i = 1, \dots, q$, the pushforward of $\gamma_{ i}$ is equal to the pushforward of $\alpha_{\infty, i}$. 
        \item The diagram
        \[ \xymatrix{ \widehat{R}^p_{\mathfrak{p}^p} / I^p \ar[r] \ar[d] & (\mathbb{T}^S \otimes_\cO S_\infty \otimes_\cO A)(M_{1, N, \bullet})_{\widetilde{\mathfrak{q}}} / K_N \ar[d] \\
        \widehat{R}_{\mathfrak{p}} / I^{ss} \ar[r] & (\mathbb{T}^S \otimes_\cO S_\infty \otimes_\cO A)(M_{\bullet})_{\widetilde{\mathfrak{q}}} / (K_N) }
              \]
    commutes.
    \end{enumerate}
\end{theorem}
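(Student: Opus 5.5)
We will deduce the statement from Theorem \ref{thm_existence_of_Galois_action} by ultra-patching the maps $R_r \to (\mathbb{T}^S[\Delta_{Q_r}])(C_{Q_r,1,\bullet}\otimes_{\mathbb{T}^S}\mathbb{T}^S_{\mathfrak{m}})/J_r$ at finite level, passing to the inverse limit in $m$, and then localising at $\widetilde{\mathfrak{q}}$. The localisation is where the operator $\delta_\ell^{N_2}$ becomes invertible, and this is what lets the ideal $I^p$ be killed.

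\textbf{Finite level.} For fixed $m \geq N$ and $r \geq m+N$, reduce the map of Theorem \ref{thm_existence_of_Galois_action} modulo $(\varpi^m,\mathfrak{a}^N)$. This gives a map
\[ R_r \to T_{r}(m) := (\mathbb{T}^S[\Delta_{Q_r}])\bigl(C_{Q_r,1,\bullet}/(\mathfrak{a}^N)\otimes_{\mathbb{T}^S}\mathbb{T}^S_{\mathfrak{m}}/(\varpi^m)\bigr)/\bar J_r, \]
where $\bar J_r$ is the image of $J_r$, so $\bar J_r^{N_1}=0$. These rings are finite, and their cardinality is bounded in terms of $m$, $N$ and the number of $\cO$-generators of the cohomology. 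The first key point is to choose $m'$ (depending on $m$ and $N$) such that $\mathfrak{m}_{R_r}^{m'}$ maps to zero in $T_r(m)$ for all $r$. This follows from a uniform bound on the nilpotence of the maximal ideal in a local Artinian ring of bounded length, once we know that the $\delta^{n!N}$-part of cohomology is bounded (Lemma \ref{lem:delta_m1_bound}).

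There is a technical point here. Unlike the $\delta$-torsion part, the full cohomology is not bounded. I would therefore work with the image of the Hecke algebra acting on $\delta^{n!N}H_*(M_{1,N}(m)_\bullet)$. Alternatively, I would first show that the Hecke algebras acting on the complexes have length bounded in terms of the length of $\delta^{n!N}H_*$ after inverting $\delta$. This is the step I expect to be the main obstacle. The fallback is to use Proposition \ref{prop_computation_of_limit_cohomology}: after localisation at $\widetilde{\mathfrak{q}}$, only $\varprojlim_m \delta^{n!N}H_*(M_{1,N}(m)_\bullet)$ matters, and it suffices to patch the Hecke algebras acting on these bounded modules. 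Their faithful-endomorphism rings have uniformly bounded length.

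\textbf{Passage to the limit.} Taking $\mathbf{R}_{\mathcal{F}}\otimes_{\mathbf{R}}\prod_r$ of the resulting maps $R_r/\mathfrak{m}^{m'}\to T_r(m)$ gives $R^p_{m'}\to T_\infty(m)$. The ideal $\prod_r \bar J_r$ has nilpotence degree at most $N_1$. Passing to the inverse limit over $m$, using Lemma \ref{lem:bounded_sequence_limit} and the finiteness of the relevant Hecke algebras, gives a homomorphism from $R^p$ to the Hecke algebra of $M_{1,N,\bullet}$ modulo an ideal $K'_N$ with $(K'_N)^{N_1}=0$. Compatibility with $S_\infty$ and condition (1) follow from Theorem \ref{thm_existence_of_Galois_action}(2). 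Indeed, $\gamma_i$ and $\alpha_{r,i}$ have the same pushforward at each finite stage, so their ultraproducts agree. Uniqueness in Lemma \ref{lem_separation_of_characters}(2) then identifies the $S_\infty$-structures after localisation.

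\textbf{Localisation and killing $I^p$.} Localise at $\widetilde{\mathfrak{q}}$ and complete at $\mathfrak{p}^p$. The Hecke algebra localised at $\widetilde{\mathfrak{q}}$ is a finite-length $E$-algebra modulo $\mathfrak{a}^N$, hence complete, so the map extends to $\widehat R^p_{\mathfrak{p}^p}$. Theorem \ref{thm_existence_of_Galois_action}(3) gives $\delta_\ell^{N_2}I^{ss}_r=0$ at each level $r$, hence $\delta_\ell^{N_2}I^p=0$ in the limit, where $I^p$ is finitely generated. Since $\delta_\ell$ has non-zero eigenvalue on $\pi^\infty$, it lies outside $\widetilde{\mathfrak{q}}$ and is a unit after localisation. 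So $I^p$ maps to zero and we may take $K_N$ to be the localisation of $K'_N$, with $N_3=N_1$.

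\textbf{The commutative square (2).} Reducing modulo $\mathfrak{a}$ and using Proposition \ref{prop_localised_patched_complexes}(3) and (2), the bottom row is the localisation at $\mathfrak{q}$ of the unpatched map $R\to\mathbb{T}^S(C_\bullet)$ that is determined by Frobenius characteristic polynomials. Commutativity then reduces to agreement on the images of $\mathbb{T}^S$. This holds by Theorem \ref{thm_existence_of_Galois_action}(1) and the surjectivity in Lemma \ref{lem_separation_of_characters}(4), combined with Lemma \ref{lem_separation_of_characters}(3).
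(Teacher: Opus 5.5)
\noindent Your overall strategy is the paper's. You ultra-patch the maps of Theorem \ref{thm_existence_of_Galois_action} on the bounded modules $\delta^{n!N}H_*$, pass to the limit in $m$, localise at $\widetilde{\mathfrak{q}}$ using Proposition \ref{prop_computation_of_limit_cohomology}, and kill $I^p$ using the invertibility of $\delta_\ell$. But the sentence ``passing to the inverse limit \dots gives a homomorphism from $R^p$ to the Hecke algebra of $M_{1,N,\bullet}$'' hides a genuine gap.

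The construction actually produces a map into $\varprojlim_m \mathbf{R}_{\mathcal{F}}\otimes_{\mathbf{R}}\prod_r T_r(m)$, modulo nilpotents. This ring need not be the image of $\mathbb{T}^S\otimes_\cO S_\infty\otimes_\cO A$. Each $T_r(m)$ is a quotient of that algebra, but an element $(b_r)_r$ of the ultraproduct has lifts $t_r$ that may vary with $r$, so the infinitely generated Hecke algebra need not surject onto the ultraproduct. You therefore have not shown that $R^p$ lands in the ring required by the statement.

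The paper resolves this only after localising at $\widetilde{\mathfrak{q}}$. The target $B'(\infty)_{\widetilde{\mathfrak{q}}}/J''(\infty)$ is Artinian, so the map from $\widehat R^p_{\mathfrak{p}^p}$ factors through $\widehat R^p_{\mathfrak{p}^p}/(\mathfrak{p}^p)^s$ for some $s$. Lemma \ref{lem_separation_of_characters}(4) says this quotient is generated by the image of $\mathbb{T}^S\otimes_\cO S_\infty\otimes_\cO E$; its proof uses part (3) to reduce to the fixed ring $R$ and Chebotarev. Hence the image of $\widehat R^p_{\mathfrak{p}^p}$ lies in the image of $(\mathbb{T}^S\otimes_\cO S_\infty\otimes_\cO A)_{\widetilde{\mathfrak{q}}}$. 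You cite this lemma only for the commutativity of the square, but it is needed first, to land in the correct target at all.

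Three further points.
\begin{enumerate}
\item Your construction gives rings acting on cohomology ($\delta^{n!N}H_*$, then $H_*(M_{1,N,\bullet})_{\widetilde{\mathfrak{q}}}$), not the derived-category Hecke algebra $(\mathbb{T}^S\otimes_\cO S_\infty\otimes_\cO A)(M_{1,N,\bullet})_{\widetilde{\mathfrak{q}}}$ in the statement. Passing back to the latter costs the nilpotent kernel of the map between them, which is bounded via \cite[Lemma 2.2.3]{10author}. This is why the paper gets $N_3 = N_1(\dim_{\mathbb{R}}X^G+1)$; your $N_3=N_1$ is unjustified.
\item The limit over $m$ needs more than an appeal to Lemma \ref{lem:bounded_sequence_limit}. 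The maps $H'_{m+1}\to H'_m$ need not be surjective, so the rings acting faithfully on them need not form an inverse system. Also, the Galois maps at levels $m$ and $m+1$ agree only modulo the sum of the two nilpotent ideals. The paper handles this by replacing $H'_m$ with $H''_m=\im(H'_\infty\to H'_m)$, and the nilpotent ideals with $J''(m)=\ker(B'(m)\to\prod_{i\le m}B'(i)/J'(i))$.
\item For condition (1), the uniqueness you need is that of the splitting of the pushforward pseudocharacter in the target ring. This is \cite[Proposition 1.5.1]{Bel09}, for residually multiplicity-free pseudocharacters agreeing modulo $\widetilde{\mathfrak{q}}$, not the uniqueness over $\widehat R^p_{\mathfrak{p}^p}$ in Lemma \ref{lem_separation_of_characters}(2).
\end{enumerate}
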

\begin{proof}
    Let $H_m = H_\ast(M_{1, N, \bullet}(m))$, $H'_m = \delta^{n! N} H_m$. We first show that each $H'_m$ is an $R^p$-module in a natural way. Note that
    \[ H_m = \mathbf{R}_{\mathcal{F}} \otimes_{\mathbf{R}} \prod_{r \geq m+N} H_\ast(C_{Q_r, 1, \bullet} / (\mathfrak{a}^N) \otimes_{\mathbb{T}^S} \mathbb{T}^S_{\mathfrak{m}} / (\varpi^m)), \]
    and
    \[ H'_m = \mathbf{R}_{\mathcal{F}} \otimes_{\mathbf{R}} \prod_{r \geq m+N} \delta^{n! N} H_\ast(C_{Q_r, 1, \bullet} / (\mathfrak{a}^N)\otimes_{\mathbb{T}^S} \mathbb{T}^S_{\mathfrak{m}} / (\varpi^m)) = \mathbf{R}_\mathcal{F} \otimes_{\mathbf{R}} \prod_{r \geq m+N} H'_{r, m}, \]
    if we define
    \[ H'_{r, m} = \delta^{n! N} H_\ast(C_{Q_r, 1, \bullet}  / (\mathfrak{a}^N)\otimes_{\mathbb{T}^S} \mathbb{T}^S_{\mathfrak{m}} / (\varpi^m)). \]
    Arguing as in the proof of Lemma \ref{lem:delta_m1_bound}, we see that the groups $H'_{r, m}$ are finite $\cO$-modules of length bounded independently of $r$. Define
    \[B_{r}(m) = (\mathbb{T}^S \otimes_\cO S_\infty \otimes_{\cO} A)(H'_{r, m}). \]
    By \cite[Lemma 2.2.6]{AC23}, there is a natural identification
    \[ \End_\cO(H'_m) = \mathbf{R}_{\mathcal{F}} \otimes_{\mathbf{R}} \prod_{r \geq m+N} \End_\cO(H'_{r, m}). \]
    Let $B(m) \leq \End_\cO(H'_m)$ correspond to $\mathbf{R}_{\mathcal{F}} \otimes_{\mathbf{R}} \prod_{r \geq m+N} B_r(m)$. Then $B(m)$ is a $\mathbb{T}^S \otimes_\cO S_\infty \otimes_\cO A$-algebra (although, as far as we can see, the map $\mathbb{T}^S \otimes_\cO S_\infty \otimes_\cO A \to B(m)$ may not be surjective).
    
    Theorem \ref{thm_existence_of_Galois_action} implies that there is a morphism $R_r \to B_{r}(m) / J_{r}(m)$ satisfying the following conditions:
    \begin{itemize}
        \item $J_{r}(m)$ is a nilpotent ideal satisfying $J_{r}(m)^{N_1} = 0$.
        \item For any finite place $v\not\in S \cup Q_r$ of $F$, the image of the characteristic polynomial of $\Frob_v$ equals $P_v(X)$.
        \item For any $v \in Q_r$, the pushforward of $\alpha_v$ equals the pushforward of $\gamma_v$. 
        \item We have $\delta_\ell^{N_2} I_r^{ss} B_{r}(m) / J_{r}(m) = 0$.
    \end{itemize}
    The ring $B_{r}(m)$ is an Artinian local $\cO$-algebra, with maximal ideal $\mathfrak{m}_{B_{r}(m)}$ nilpotent, of exponent bounded independently of $r$ (although depending on $N$ and $m$). In particular, there exists $n(m)$ such that the map $R_r \to B_{r}(m) / J_{r}(m)$ factors through $R_r / \mathfrak{m}_{R_r}^{n(m)}$. By passage to product and localisation, we obtain a map
    \[ R^p_{n(m)} \to \left( \mathbf{R}_{\mathcal{F}} \otimes_{\mathbf{R}} \prod_{r \geq m+N} B_{r}(m) \right) / J(m), \] 
    where $J(m)$ is the image of the ideal $\prod_{r \geq N} J_{r}(m)$, hence a map 
    \[ R^p \to B(m) / J(m). \] 
    We would like to pass to the limit. Let $H'_\infty = \varprojlim_m H'_m$, and let $H''_m = \operatorname{im}(H'_\infty \to H'_m)$, $B'(m)$ the quotient of $B(m)$ that acts faithfully on $H''_m$. Then the rings $B'(m)$ form an inverse system, and $B'(\infty) = \varprojlim_m B'(m)$ acts faithfully on $H'_\infty$. Let $J'(m) = J(m) B'(m)$, and let $f_m : R^p \to B'(m) / J'(m)$ be the induced map. 
    By construction, the map $R^p \to B'(m) / J'(m+1) B'(m)$ induced by $f_{m+1}$ is equal to $f_m$ once we quotient by $J'(m) + J'(m+1)$. Therefore, if we set $J''(m) = \ker(B'(m) \to \prod_{i=1}^m B'(i) / J'(i))$, then there is a unique map $f'_m : R^p \to B'(m) / J''(m)$ compatible with the $f_i$, $i = 1, \dots, m$, in this sense, and the rings $B'(m) / J''(m)$ form an inverse system. Let $J''(\infty) = \ker (B'(\infty) \to \prod_{m=1}^\infty B'(m) / J''(m))$, and let $f'_\infty : R^p \to B'(\infty) / J''(\infty)$ denote the induced map. By construction, $B'(\infty)$ is a $\mathbb{T}^S \otimes_\cO S_\infty \otimes_\cO A$-algebra, and the following conditions are satisfied:
    \begin{itemize}
        \item $J''(\infty)^{N_1} = 0$.
        \item For each $i = 1, \dots, q$, the pushforward of $\alpha_{\infty, i}$ equals the pushforward of $\gamma_i$.
        \item We have $\delta_\ell^{N_2} I^p B'(\infty) / J''(\infty) = 0$.
    \end{itemize}
    Localising at the prime $\widetilde{\mathfrak{q}} \leq \mathbb{T}^S \otimes_\cO S_\infty \otimes_\cO A$, we obtain a map
    \[ \widehat{R}^p_{\mathfrak{p}^p} \to B'(\infty)_{\widetilde{\mathfrak{q}}} / J''(\infty), \,\,\, B'(\infty)_{\widetilde{\mathfrak{q}}} \leq \End_{\widehat{S}_{\infty, \mathfrak{a}}/(\mathfrak{a}^N)}(H'_{\infty, \widetilde{\mathfrak{q}}}) \]
    (note the map from the localisation of $R^p$ extends to the completion because the target is an Artinian $E$-algebra). By Proposition \ref{prop_computation_of_limit_cohomology}, there is an isomorphism
    \begin{equation}\label{eqn_cohomology} H'_{\infty, \widetilde{\mathfrak{q}}} \cong H_\ast(M_{1, N, \bullet})_{\widetilde{\mathfrak{q}}}. 
    \end{equation}
    Lemma \ref{lem_separation_of_characters}(4) implies that the image of the map $\widehat{R}^p_{\mathfrak{p}^p} \to B'(\infty)_{\widetilde{\mathfrak{q}}} / (J''(\infty))$ 
    is contained in the image of the map
    \[ (\mathbb{T}^S \otimes_\cO S_\infty \otimes_\cO A)_{\widetilde{\mathfrak{q}}} \to B'(\infty)_{\widetilde{\mathfrak{q}}} \to B'(\infty)_{\widetilde{\mathfrak{q}}}/(J''(\infty)). \]
    This image is itself a quotient of $(\mathbb{T}^S \otimes_\cO S_\infty \otimes_\cO A)(M_{1, N, \bullet})_{\widetilde{\mathfrak{q}}}$ by a nilpotent ideal $K_N$ of exponent $N_3 = N_1 (\dim_\mathbb{R} X^G + 1)$, by \eqref{eqn_cohomology} and \cite[Lemma 2.2.3]{10author}. We finally obtain a diagram
      \[ \xymatrix{ \widehat{R}^p_{\mathfrak{p}^p}  \ar[r] \ar[d] & (\mathbb{T}^S \otimes_\cO S_\infty \otimes_\cO A)(M_{1, N, \bullet})_{\widetilde{\mathfrak{q}}} / K_N \ar[d] \\
        \widehat{R}_{\mathfrak{p}}\ar[r] & (\mathbb{T}^S \otimes_\cO S_\infty \otimes_\cO A)(M_{\bullet})_{\widetilde{\mathfrak{q}}} / (K_N) }
              \]
    To complete the proof, it remains to justify the following points:
    \begin{itemize}
        \item The diagram commutes. 
        \item The respective kernels of the horizontal arrows contain $I^p$, $I^{ss}$. 
        \item For each $i = 1, \dots, q$, and for each $j = 1, \dots, n$, the pushforward of $\alpha_{\infty, i}^{(j)}$ equals the pushforward of $\gamma_i^{(j)}$. 
    \end{itemize}
    The first point holds by construction. The second holds because we have $\delta_\ell^{N_2} I^p B'(\infty)_{\widetilde{\mathfrak{q}}} / (J''(\infty)) = 0$, by construction, and because $\delta_\ell \not\in \widetilde{\mathfrak{q}}$ (as $\mathfrak{q}$ captures the Hecke eigenvalues of a cuspidal automorphic representation of $\GL_n(\mathbb{A}_F)$, which is generic at the unramified places, by \cite[Corollary 5.10]{Sha74} -- this uses the definition of $\delta_\ell$ in \cite[\S 2]{Aca25}). The third point holds because the pushforwards of $\alpha_{\infty, i}$ and $\gamma_i$ are equal, by construction; these pseudocharacters are residually multiplicity-free; and they agree modulo $\widetilde{\mathfrak{q}}$, so we can apply \cite[Proposition 1.5.1]{Bel09}. This completes the proof. 
\end{proof}

\begin{definition} We define $M_{1, \infty, \bullet} = \varprojlim_N M_{1, N, \widetilde{\mathfrak{q}}, \bullet}$.
\end{definition}

\begin{corollary}\label{cor_descent_from_infinite_level}\leavevmode
    \begin{enumerate}
        \item $M_{1, \infty, \bullet}$ is a complex of $\mathbb{T}^S \otimes_\cO \widehat{S}_{\infty, \mathfrak{a}} \otimes_\cO A$-modules, perfect as a complex of $\widehat{S}_{\infty, \mathfrak{a}}$-modules; and there is a quasi-isomorphism $M_{1, \infty, \bullet} / (\mathfrak{a}) \to M_{\widetilde{\mathfrak{q}}, \bullet}$ of complexes of $\mathbb{T}^S \otimes_\cO A$-modules. 
        \item There is a nilpotent ideal $K_\infty \leq (\mathbb{T}^S \otimes_\cO \widehat{S}_{\infty, \mathfrak{a}} \otimes_\cO A)(M_{1, \infty, \bullet})$ and a morphism 
    \[ \widehat{R}^p_{\mathfrak{p}^p} / I^p \to (\mathbb{T}^S \otimes_\cO \widehat{S}_{\infty, \mathfrak{a}} \otimes_\cO A)(M_{1, \infty, \bullet}) / K_\infty \]
    of $\widehat{S}_{\infty, \mathfrak{a}}$-algebras such that the diagram
    \[ \xymatrix{ \widehat{R}^p_{\mathfrak{p}^p} / I^p \ar[r] \ar[d] & (\mathbb{T}^S \otimes_\cO \widehat{S}_{\infty, \mathfrak{a}} \otimes_\cO A)(M_{1, \infty, \bullet})/ K_\infty \ar[d] \\
        \widehat{R}_{\mathfrak{p}} / I^{ss} \ar[r] & (\mathbb{T}^S \otimes_\cO \widehat{S}_{\infty, \mathfrak{a}} \otimes_\cO A)(M_{\bullet})_{\widetilde{\mathfrak{q}}} / (K_\infty) }
              \]
    commutes.
    \end{enumerate}
\end{corollary}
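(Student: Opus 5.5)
The plan is to pass to the limit in $N$ in Proposition \ref{prop_localised_patched_complexes}(3) and Theorem \ref{thm_existence_of_pached_Galois_action}, using the fact that after localisation at $\widetilde{\mathfrak{q}}$ everything lives over the regular local ring $\widehat{S}_{\infty,\mathfrak{a}}$, essentially $E\llbracket Y_1,\dots,Y_{nq}\rrbracket$.

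\textbf{Part (1).} By Proposition \ref{prop_localised_patched_complexes}(3), $(M_{1,N,\widetilde{\mathfrak{q}},\bullet})_N$ is a bounded above projective system of complexes of flat $\widehat{S}_{\infty,\mathfrak{a}}/\mathfrak{a}^N$-modules, with isomorphisms $M_{1,N+1,\widetilde{\mathfrak{q}},\bullet}/\mathfrak{a}^N \cong M_{1,N,\widetilde{\mathfrak{q}},\bullet}$. Lemma \ref{lem_inverse_limit_of_flat_is_flat}(2), applied termwise with $R = \widehat{S}_{\infty,\mathfrak{a}}$ and $I = \mathfrak{a}$, shows that $M_{1,\infty,\bullet}$ is a complex of flat $\widehat{S}_{\infty,\mathfrak{a}}$-modules with $M_{1,\infty,\bullet}/\mathfrak{a} \cong M_{1,1,\widetilde{\mathfrak{q}},\bullet} \cong M_{0,\widetilde{\mathfrak{q}},\bullet}$. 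Composing with the inverse of the quasi-isomorphism $\alpha_{\mathfrak{q}}$ of Proposition \ref{prop_localised_patched_complexes}(2), localised further at $\widetilde{\mathfrak{q}}$, gives the required identification with $M_{\widetilde{\mathfrak{q}},\bullet}$ in the derived category. To get an honest map of complexes, use $\delta^{-n!}\beta$, which is a genuine morphism of complexes because $\delta$ is invertible after localisation.

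For perfectness:
\begin{itemize}
\item $H_\ast(M_{\widetilde{\mathfrak{q}},\bullet})$ is a finite-dimensional $E$-vector space, since it is a localisation of the finite $\cO$-modules $H_\ast(C_\bullet)_\mathfrak{m}$ via Proposition \ref{prop_localised_patched_complexes}(1). It is also concentrated in a bounded range of degrees.
\item Hence $M_{1,\infty,\bullet}\otimes^{\mathbb L}_{\widehat{S}_{\infty,\mathfrak{a}}} E$ has finite total cohomology.
\item $M_{1,\infty,\bullet}$ is derived $\mathfrak{a}$-complete: it is a complex of $\mathfrak{a}$-adically complete modules by Lemma \ref{lem_inverse_limit_of_flat_is_flat}, and one argues as in Proposition \ref{prop_patched_complexes_torsion_free_level}(1).
\item A derived-complete, bounded above complex over a regular local ring whose reduction to the residue field has finite, bounded cohomology is perfect. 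This is a derived Nakayama argument: build a minimal free resolution degree by degree, and use finite global dimension to truncate.
\end{itemize}

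\textbf{Part (2).} Set $T_N = (\mathbb{T}^S\otimes \widehat S_{\infty,\mathfrak{a}}\otimes A)(M_{1,N,\bullet})_{\widetilde{\mathfrak{q}}}$ and $T_\infty = (\mathbb{T}^S \otimes \widehat S_{\infty,\mathfrak{a}}\otimes A)(M_{1,\infty,\bullet})$. Since $M_{1,\infty,\bullet}$ is perfect, $T_\infty$ is finite over $\widehat{S}_{\infty,\mathfrak{a}}$. Reduction modulo $\mathfrak{a}^N$ gives surjections $T_\infty \to T_N$, and $T_\infty = \varprojlim_N T_N$. This needs that endomorphisms of a perfect complex commute with the limit, which follows from finiteness of $\End$ and the vanishing of $R^1\varprojlim$ for finite modules over a complete ring.

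Let $K_\infty$ be the preimage in $T_\infty$ of the ideals $K_N$. The $K_N$ are not obviously compatible as $N$ varies, so I would mimic the construction of $J''$ in the proof of Theorem \ref{thm_existence_of_pached_Galois_action}. Replace $K_N$ by the kernel $K'_N$ of $T_N \to \prod_{N'\le N} T_{N'}/K_{N'}$ (pushing forward appropriately). These ideals are compatible, and the maps from $\widehat{R}^p_{\mathfrak{p}^p}/I^p$ glue, since the targets are the same maps modulo the sum of the ideals. The nilpotence exponent is bounded by $N_3$ uniformly in $N$, and the kernel of the map to $T_N/K'_N$ is nilpotent of exponent $N_3$ at each finite level. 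Since $T_\infty$ is Noetherian and the ideals $\mathfrak{a}^N T_\infty$ are cofinal, $K_\infty = \varprojlim K'_N$ satisfies $K_\infty^{N_3} \subset \bigcap_N \mathfrak{a}^N T_\infty = 0$ by Krull.

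The map $\widehat{R}^p_{\mathfrak{p}^p}/I^p \to T_\infty/K_\infty$ is then the limit of the maps of Theorem \ref{thm_existence_of_pached_Galois_action}. It is $\widehat{S}_{\infty,\mathfrak{a}}$-linear by the $S_\infty$-linearity at each level, and it is continuous, so it extends to the completion. The square commutes because it commutes at $N=1$, which is Theorem \ref{thm_existence_of_pached_Galois_action}(2), together with the identification in part (1).

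The main obstacle is this compatibility of the nilpotent ideals $K_N$ and of the maps across $N$, including proving $T_\infty = \varprojlim T_N$. I would handle it exactly as with $J''$ above, relying on the uniform exponent $N_3$.
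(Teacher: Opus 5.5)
Your proposal is correct and takes essentially the paper's route. Part (1) comes from Lemma \ref{lem_inverse_limit_of_flat_is_flat}(2) applied to the localised system $M_{1,N,\widetilde{\mathfrak q},\bullet}$ together with Proposition \ref{prop_localised_patched_complexes}, and part (2) from the identification $T_\infty=\varprojlim_N T_N$ (which the paper takes from \cite[Lemma 2.13]{Kha17}) followed by passage to the limit in Theorem \ref{thm_existence_of_pached_Galois_action}. You only spell out more than the paper does, namely the derived-Nakayama argument for perfectness and the $J''$-style compatible choice of nilpotent ideals; note that $K_\infty^{N_3}=0$ follows directly from $K_\infty^{N_3}\subseteq\bigcap_N\ker(T_\infty\to T_N)=0$, without appealing to Krull.
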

\begin{proof}
    The first point follows from the properties of $M_{1, N, \widetilde{\mathfrak{q}}, \bullet}$, and Lemma \ref{lem_inverse_limit_of_flat_is_flat}. 
    For the second, note that we have
    \[ (\mathbb{T}^S \otimes_\cO \widehat{S}_{\infty, \mathfrak{a}}\otimes_\cO A)(M_{1, \infty, \bullet} ) = \varprojlim_N (\mathbb{T}^S \otimes_\cO \widehat{S}_{\infty, \mathfrak{a}} \otimes_\cO A)(M_{1, N, \widetilde{\mathfrak{q}}, \bullet}) \]
    (using e.g.\ \cite[Lemma 2.13]{Kha17}), so this follows from Theorem \ref{thm_existence_of_pached_Galois_action} by passage to the limit. 
\end{proof}
We are now ready to complete the proof of Theorem \ref{thm_vanishing_of_Selmer_special_case}.
\begin{proof}[Proof of Theorem \ref{thm_vanishing_of_Selmer_special_case}]
We apply Lemma \ref{lem_patching_lemma} with $S = \widehat{S}_{\infty, \mathfrak{a}}$, $T_\infty = (\mathbb{T}^S \otimes_\cO \widehat{S}_{\infty, \mathfrak{a}} \otimes_\cO A)(M_{1, \infty, \bullet})$, $I_\infty = K_\infty$, and $M_{\infty, \bullet} = M_{1, \infty, \bullet}$. We need to check hypothesis (2) of the lemma, namely that $H_\ast(M_{1, \infty, \bullet} / (\mathfrak{a}))$ is a semisimple $T_\infty$-module. By Corollary \ref{cor_descent_from_infinite_level} and Proposition \ref{prop_complexes_compute_classical_cohomology}, we can identify
\[ H_\ast(M_{1, \infty, \bullet} / (\mathfrak{a})) \cong H_\ast(M_{\widetilde{\mathfrak{q}}, \bullet}) \cong H_\ast( X_K^G, \mathcal{V}_\lambda^\vee )_{\mathfrak{q}'}, \]
so this follows from strong multiplicity one and the computation, using Franke's theorem, of the cohomology of $X_K^G$ in terms of automorphic forms: following the strategy of \cite[Theorem 2.4.10]{10author}, we see that only $\pi$ contributes to the displayed group. 

By Lemma \ref{lem_properties_of_patched_deformation_ring}, $\widehat{R}^p_{\mathfrak{p}^p} / (I^p)$ is a quotient of $E \llbracket x_1, \dots, x_{nq - n [ F^+ : \mathbb{Q} ] + 1} \rrbracket$, while $\widehat{S}_{\infty, \mathfrak{a}}$ is isomorphic to a power series ring in $nq$ variables, and $H_\ast( M_{1, \infty, \bullet} / (\mathfrak{a}))$ is non-zero in a range of degrees of length $n [ F^+ : \mathbb{Q} ] - 1$, by the computation of the cuspidal cohomology of $\GL_n$ (cf. the proof of \cite[Theorem 2.4.10]{10author} for a very similar argument, using the results of \cite{Fra98a} and \cite{Clo90}). The last thing to check is that the map 
\[ \widehat{R}^p_{\mathfrak{p}^p} / (I^p) \to (\mathbb{T}^S \otimes_\cO \widehat{S}_{\infty, \mathfrak{a}} \otimes_\cO A)(M_{1, \infty, \bullet}) / K_\infty \]
is surjective. However, this is true because the right-hand side is generated, as $\widehat{S}_{\infty, \mathfrak{a}}$-algebra, by $\mathbb{T}^S$ and the values of the characters $\gamma_i^{(j)} = \alpha_{\infty, i}^{(j)}$, which are in the image of $\widehat{R}^p_{\mathfrak{p}^p}$.

We can therefore apply the conclusion of the lemma, which states that the map
\[ E\llbracket x_1, \dots, x_{nq - n [F^+ : \mathbb{Q}] + 1} \rrbracket \to \widehat{R}^p_{\mathfrak{p}^p} / (I^p)\to (\mathbb{T}^S \otimes_\cO \widehat{S}_{\infty, \mathfrak{a}} \otimes_\cO A)(M_{1, \infty, \bullet}) / K_\infty  \]
is an isomorphism, and that $\mathfrak{a} \widehat{R}^p_{\mathfrak{p}^p}/ (I^p) = \mathfrak{p}^p \widehat{R}^p_{\mathfrak{p}^p}/ (I^p)$. Applying Lemma \ref{lem_properties_of_patched_deformation_ring}(2) and Lemma \ref{lem_separation_of_characters}(3), we conclude that $\widehat{R}_{\mathfrak{p}} / (I^{ss}) = E$. The Zariski tangent space to $\widehat{R}_{\mathfrak{p}} / (I^{ss})$ may be identified with $H^1_f(F, \ad \rho \otimes_\cO E)$ (same proof as \cite[Proposition 2.17]{New23}), so this completes the proof.
\end{proof}

\section{Deduction of main theorems}\label{sec_general_results}

\begin{theorem}\label{thm_general_vanishing_result}
    Let $F$ be a CM field, let $\pi$ be a cuspidal, regular algebraic automorphic representation of $\GL_n(\mathbb{A}_F)$, let $p$ be a prime, and let $\iota : \overline{\mathbb{Q}}_p \to \mathbb{C}$ be an isomorphism. Suppose that the following conditions are satisfied:
    \begin{enumerate}
        \item $r_{\pi, \iota}(G_{F(\zeta_{p^\infty})})$ is enormous.
        \item Writing $L / F(\zeta_{p^\infty})$ for the extension cut out by $\ad r_{\pi,\iota}$, we have \[
        H^1(L / F, \ad r_{\pi,\iota}(1)) = 0.\] 
        \item For each finite place $v$ of $F$ such that $v | p$ or $\pi_v$ is ramified, $r_{\pi, \iota}|_{G_{F_v}}$ is generic.
    \end{enumerate}
    Then $H^1_f(F, \ad r_{\pi, \iota}) = 0 $.
\end{theorem}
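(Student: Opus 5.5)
We reduce Theorem \ref{thm_general_vanishing_result} to Theorem \ref{thm_vanishing_of_Selmer_special_case} by solvable base change. Choose a finite Galois CM extension $F'/F$ that is solvable, and set $\pi' = \mathrm{BC}_{F'/F}(\pi)$, so that $r_{\pi',\iota} = r_{\pi,\iota}|_{G_{F'}}$. The choice of $F'$ is made so that it has five properties. First, $F'$ is linearly disjoint from the fixed field $L$ of $\ad r_{\pi,\iota}$ over $F(\zeta_{p^\infty})$; this can be arranged using that $F' \cap L$ is controlled by avoiding a finite set of extensions. Second, $\pi'$ is cuspidal, which follows from the first property together with the irreducibility of $r_{\pi,\iota}|_{G_{F'(\zeta_{p^\infty})}}$ given by enormousness. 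Third, $\pi'_w$ is Iwahori-spherical at every finite place $w$, which we get by making $F'_w/F_v$ sufficiently ramified at the finitely many places where $\pi$ is ramified. Fourth, $F'$ contains an imaginary quadratic field $F_0$ in which $p$ and all residue characteristics of ramified places split. Fifth, $F'^+$ has many $p$-adic places of small local degree, so that hypothesis (6) holds; for instance, adjoin a totally real cyclic extension in which $p$ splits completely.

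Next we check each hypothesis of Theorem \ref{thm_vanishing_of_Selmer_special_case} for $\rho' = r_{\pi,\iota}|_{G_{F'}}$.
\begin{enumerate}
\item \emph{Enormousness.} The image $\rho'(G_{F'(\zeta_{p^\infty})})$ equals $r_{\pi,\iota}(G_{F(\zeta_{p^\infty})})$ up to a finite-index normal subgroup. By linear disjointness with $L$, the image of $\ad$ is unchanged, and the enormousness condition only depends on the adjoint action and the eigenvalue condition. More simply, choose $F'$ disjoint from the field cut out by $r_{\pi,\iota}$ itself over $F(\zeta_{p^\infty})$, so the images agree exactly.
\item \emph{Vanishing of $H^1$.} Under the same disjointness, $L' = LF'$ and $\Gal(L'/F') \cong \Gal(L/F)$. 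This transfers hypothesis (2) for $F$ directly to $F'$.
\item \emph{Genericity.} Genericity is preserved under restriction to finite extensions after enlarging the coefficients, since the Weil--Deligne representation of the restriction is the restriction of the Weil--Deligne representation. The required condition is the absence of maps $(r,N) \to (r(1),N)$. Passing to $W_{F'_w}$ can create new morphisms only if eigenvalue ratios become $q_w$ where they were $q_v^{f}\zeta$ for roots of unity $\zeta$. We avoid this by imposing on $F'_w/F_v$ a residue degree prime to the orders of the finitely many relevant roots of unity. At places of $F'$ above unramified places of $F$ not dividing $p$, we instead use Lemma \ref{lem_genericity_at_unramified_places}.
\end{enumerate}
We also need Hypothesis (5) and (6) at $F'$, which are built into the choice of $F'$. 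With these in place, Theorem \ref{thm_vanishing_of_Selmer_special_case} gives $H^1_f(F',\ad r_{\pi',\iota})=0$.

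To descend, we use the restriction map $H^1(F,\ad\rho)\to H^1(F',\ad\rho)^{\Gal(F'/F)}$. It is injective in characteristic $0$ because $\Gal(F'/F)$ is finite, and it sends $H^1_f$ into $H^1_f$: the unramified and crystalline local conditions are compatible with restriction for $v\nmid p$ and $v\mid p$ respectively, the latter since $H^1_f = \ker(H^1\to H^1(-\otimes B_{cris}))$ is functorial. Hence $H^1_f(F,\ad r_{\pi,\iota})=0$.

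The main obstacle is the simultaneous choice of $F'$, which must satisfy all the local conditions, preserve genericity at ramified and $p$-adic places, keep cuspidality, and be disjoint from $L$. We would handle this with the standard lemma producing solvable CM extensions with prescribed local behaviour at finitely many places and linearly disjoint from a given finite extension; the latter is applied to finite subextensions of $L$, using that a finite solvable $F'$ meets $L$ in a finite extension that can be steered away from. The genericity-preservation claim under ramified base change also needs care: the monodromy operator is unchanged and Frobenius is replaced by its $f$-th power, so we must choose $f$ so that no new eigenvalue coincidences $\alpha_i^f = q_v^f\alpha_j^f$ arise, which excludes only finitely many residue degrees modulo roots of unity.
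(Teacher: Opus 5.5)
Your overall route is the paper's: pass to a solvable CM extension $F'/F$, linearly disjoint from $L$, to reach the hypotheses of Theorem \ref{thm_vanishing_of_Selmer_special_case}, then descend using injectivity of restriction on $H^1_f$ with characteristic $0$ coefficients. The remaining steps are fine: disjointness, Iwahori-sphericity, the choice of $F_0$, hypothesis (6), and the descent.

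\textbf{The gap is the preservation of genericity} at places above those where $\pi$ is ramified or which divide $p$. Your opening claim, that genericity is preserved under restriction to finite extensions, is false for general Weil--Deligne representations. Your remedy only treats the case where inertia already acts unipotently. That remedy is to choose residue degrees so that Frobenius eigenvalue ratios do not newly become $q_w$. But at exactly the places where you need ramified base change to reach Iwahori level, restriction to $W_{F'_w}$ can also make constituents that differ by a \emph{ramified} twist become isomorphic.

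For example, let $v \nmid p$ and let $\chi$ be a ramified quadratic character of $W_{F_v}$. Then $1 \oplus \chi\epsilon^{-1}$ (with $N=0$) has no nonzero map to its twist by $\epsilon$, so it is generic. Now pass to the ramified quadratic extension cut out by $\chi$. This extension has residue degree $1$, so your recipe permits it, and it is the kind of extension needed to kill the inertial type. Over it the representation becomes $1 \oplus \epsilon^{-1}$, which is not generic. Moreover, for a suitable Frobenius lift the two eigenvalues already differ by $q_v^{\pm 1}$ over $F_v$. So eigenvalue ratios are not the right invariant once inertia acts non-trivially. The same issue arises at $v \mid p$, where you must pass to an extension over which $\rho$ becomes semistable.

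\textbf{The missing ingredient is automorphic}, and it is how the paper argues.
\begin{enumerate}
\item Since $r_{\pi,\iota}|_{G_{F_v}}$ is generic, local--global compatibility gives $\mathrm{WD}(r_{\pi,\iota}|_{G_{F_v}})^{F\text{-}ss} \cong \rec^T_{F_v}(\iota^{-1}\pi_v)$.
\item The representation $\pi_v$ is generic and essentially unitary. By Tadi\'c's classification, its parameter is, up to twist, a sum of $\mathrm{Sp}_m(\tau)\otimes|\cdot|^{s}$ with $\tau$ unitary and $|s|<1/2$.
\item This shape is preserved by restriction to any $W_{F'_w}$, with the same exponents $s$.
\item This shape forces genericity, because two linked segments would need centres differing by at least $1$.
\end{enumerate}
Hence genericity survives every solvable CM base change, with no local constraint on $F'$. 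Examples like the one above are excluded because they are not essentially unitary.

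Without this input, your verification of hypothesis (2) of Theorem \ref{thm_vanishing_of_Selmer_special_case} does not go through. A genuinely different local argument choosing each $F'_w$ to avoid all such collisions could in principle replace it, but you have not given one, and it is not obviously always possible.
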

\begin{proof}
    We are free to replace $F$ by a finite CM extension, linearly disjoint from $L$, as this only increases the size of the relevant Selmer group. (Recall that we are working with characteristic 0 coefficients, so restriction is injective.) By \cite[Lemma 5.1]{New23}, and cyclic base change \cite{Art89}, we can assume that $\pi$ is everywhere unipotently ramified, and that $F$ contains an imaginary quadratic field $F_0$ such that any rational prime which is either equal to $p$, or which lies below a ramified place of $\pi$, splits in $F_0$. (We need to check that if $M / F$ is a soluble CM extension such that $\mathrm{BC}_{M / F}(\pi)$ is cuspidal, and $w$ is a finite place of $M$ lying above the place $v$ of $F$, then $r_{\pi, \iota}|_{G_{M_w}} = r_{\mathrm{BC}_{M / F}(\pi), \iota}|_{G_{M_w}}$ is generic. If $r_{\pi, \iota}|_{G_{F_v}}$ is generic, then 
    there is an isomorphism
    \[ \mathrm{WD}(r_{\pi, \iota}|_{G_{F_v}})^{F-ss} \cong \rec_{F_v}^T(\iota^{-1} \pi_v). \]
    The local representation $\pi_v$ is essentially unitarizable and generic. The classification of unitary representations of $\GL_n(F_v)$ \cite{Tad86} shows that $\rec_{F_v}^T(\iota^{-1} \pi_v)|_{W_{M_w}}$ is generic, and we have
    \[ \rec_{F_v}^T(\iota^{-1} \pi_v)|_{W_{M_w}} \cong \mathrm{WD}(r_{\pi, \iota}|_{G_{M_w}})^{F-ss}. \]
    Therefore $r_{\pi, \iota}|_{G_{M_w}}$ is generic, as required.) Making a further extension of $F$ of sufficiently large degree, in which all the $p$-adic places split, we can assume that hypothesis (6) of Theorem \ref{thm_vanishing_of_Selmer_special_case} is satisfied. All the other hypotheses are satisfied, by construction, so the result now follows.
\end{proof}
We deduce some corollaries that may be easier to apply in practice.
\begin{corollary}\label{cor_selmer_vanishing_under_purity}
     Let $F$ be a CM field, let $\pi$ be a cuspidal, regular algebraic automorphic representation of $\GL_n(\mathbb{A}_F)$, let $p$ be a prime, and let $\iota : \overline{\mathbb{Q}}_p \to \mathbb{C}$ be an isomorphism. Suppose that the following conditions are satisfied:
     \begin{enumerate}
         \item $r_{\pi, \iota}(G_{F(\zeta_{p^\infty})})$ is enormous.
         \item For each finite place $v$ of $F$, $\mathrm{WD}(r_{\pi, \iota}|_{G_{F_v}})$ is pure, in the sense of \cite[\S 1]{Tay07}.
     \end{enumerate}
     Then $H^1_f(F, \ad r_{\pi, \iota}) = 0 $.
\end{corollary}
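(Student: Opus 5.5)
We reduce Corollary \ref{cor_selmer_vanishing_under_purity} to Theorem \ref{thm_general_vanishing_result}. Hypothesis (1) is shared, so we only need to show that purity gives hypotheses (2) and (3) of that theorem: the vanishing of $H^1(L/F, \ad r_{\pi,\iota}(1))$, and genericity of $r_{\pi,\iota}|_{G_{F_v}}$ at every finite place $v$.

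\emph{Genericity from purity.} Fix a finite place $v$ and write $(r, N) = \mathrm{WD}(r_{\pi,\iota}|_{G_{F_v}})$, which is pure of some weight $w$ by assumption. A nonzero morphism $(r,N) \to (r(1),N)$ would identify a nonzero subquotient of $(r,N)$ with a subquotient of $(r(1),N)$. The weights of Frobenius on $(r(1),N)$ are those of $(r,N)$ shifted by $-2$. The monodromy-weight structure of a pure Weil--Deligne representation is a direct sum of $\mathrm{Sp}_m(\sigma)$ pieces with centre weight $w$. Comparing the graded pieces of the monodromy filtration, whose weights must match exactly, gives a contradiction. The cleanest route is the standard fact that a pure representation has $\Hom_{WD}((r,N),(r(1),N))=0$, equivalently $H^1_f = H^1_g$ for $\ad$; compare \cite[Remark 1.2.9]{All16}, or \cite{New23}, where purity is used to deduce genericity. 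Concretely, any morphism would send the lowest-weight part of $\ker N$ to a space of weight shifted by $2$, and iterating through the filtration forces the morphism to be zero. This settles hypothesis (3), in fact at every place.

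\emph{Vanishing of $H^1(L/F, \ad r_{\pi,\iota}(1))$.} This is the step where \cite{New23} also uses purity, and we follow that argument. Let $\Gamma = \Gal(L/F)$, which is a $p$-adic Lie group. Its subgroup $\Gal(L/F(\zeta_{p^\infty}))$ is the image of $G_{F(\zeta_{p^\infty})}$ under $\ad r_{\pi,\iota}$, and it acts trivially on the cyclotomic twist. First we use inflation-restriction together with the enormous image condition. The center of the Zariski closure acts on $\ad(1)$ through the cyclotomic character, which is nontrivial. A Sen-type argument, or a Lie algebra argument with a central element acting by a nontrivial scalar, then kills the cohomology as long as $\ad r_{\pi,\iota}(1)$ has no invariants under the relevant subgroup. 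The source of the required weight mismatch is purity: at an unramified place of good reduction, Frobenius eigenvalues on $\ad(1)$ all have weight $-2 \neq 0$. By Chebotarev, this gives an element of $\Gamma$ with no eigenvalue $1$ on $\ad(1)$ and central in a suitable open subgroup. A central element with no fixed vectors annihilates the continuous cohomology of that open subgroup, and restriction to an open subgroup is injective with $\bbQ_p$-coefficients.

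The main obstacle is producing such a central element of $\Gamma$. We would first try to take the Frobenius image in the centre of $\Gal(L/F(\zeta_{p^\infty}))$ extended by $\Gal(F(\zeta_{p^\infty})/F)$; alternatively, we would use a scalar element coming from the Zariski closure of the image. Once hypotheses (2) and (3) are established, Theorem \ref{thm_general_vanishing_result} gives $H^1_f(F,\ad r_{\pi,\iota}) = 0$.
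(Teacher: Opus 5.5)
\textbf{There is a genuine gap in the cohomology-vanishing step.} Your reduction is the same as the paper's. The paper deduces the corollary from Theorem \ref{thm_general_vanishing_result}, saying only that genericity follows from purity, and that $H^1(L/F,\ad r_{\pi,\iota}(1))=0$ follows from purity and \cite[Lemma 6.2]{kisin2003}. Your genericity paragraph is fine as an appeal to that standard fact, although your concrete sketch is loose. A cleaner argument uses primitive vectors. A map $f:(r,N)\to(r(1),N)$ commutes with $N$ and moves weights by $2$ in the direction opposite to $N$. So it kills each $x$ of weight $w+i$ with $N^{i+1}x=0$, because $N^{i+1}$ is injective in weight $w+i+2$, and such $x$ generate under $N$.

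The cohomology vanishing is where the argument fails. It needs an element of $\Gamma=\Gal(L/F)$ that is central in an open subgroup and has no eigenvalue $1$ on $\ad r_{\pi,\iota}(1)$, and you never produce one.
\begin{itemize}
\item Chebotarev gives Frobenius elements with no eigenvalue $1$, but these are essentially never central in an open subgroup. For example, if the projective image of $r_{\pi,\iota}$ is open in $\mathrm{PGL}_n(\mathbb{Z}_p)$, the centralizer in $\Gamma$ of a regular semisimple $\Frob_v$ has dimension at most $n$, while $\Gamma$ has dimension at least $n^2-1$.
\item Your claim that the centre of the Zariski closure acts on $\ad r_{\pi,\iota}(1)$ through the cyclotomic character is false in general. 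When $r_{\pi,\iota}$ is induced, the projective image contains a central torus acting non-trivially on $\ad r_{\pi,\iota}$.
\item An element $(1,t)$ with $t\neq 1$ in $\Gamma\subset \ad r_{\pi,\iota}(G_F)\times\mathbb{Z}_p^\times$ would work. However, it exists only if the field cut out by $\ad r_{\pi,\iota}$ meets $F(\zeta_{p^\infty})$ in a finite extension of $F$, and you give no argument for that.
\end{itemize}

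\textbf{The missing idea} is to use purity to kill invariants of open subgroups, not to manufacture a central element.
\begin{enumerate}
\item Take any finite $F'/F$ and any place $w\nmid p$ of $F'$ at which $r_{\pi,\iota}$ is unramified. Purity says $\Frob_w$ acts on $W=\ad r_{\pi,\iota}(1)$ with all eigenvalues of non-zero weight. Hence $W^{G_{F'}}=0$, or equivalently $W^{\mathfrak{g}}=0$, where $\mathfrak{g}$ is the Lie algebra of $\Gamma$.
\item $\Gamma$ and all its open subgroups act faithfully and semisimply on $\ad r_{\pi,\iota}\oplus\overline{\mathbb{Q}}_p(1)$. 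Therefore $\mathfrak{g}$ is reductive and acts semisimply on $W$.
\item It follows that $H^1(\mathfrak{g},W)\cong(\mathfrak{g}/[\mathfrak{g},\mathfrak{g}])^{\vee}\otimes W^{\mathfrak{g}}=0$.
\item Lazard's comparison $H^1(\Gamma,W)\cong H^1(\mathfrak{g},W)^{\Gamma}$ then gives $H^1(L/F,W)=0$.
\end{enumerate}
This is essentially the content of the Kisin lemma the paper cites. With it replacing your final paragraph, your proof goes through.
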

\begin{proof}
    We need to check that the hypotheses of Theorem \ref{thm_general_vanishing_result} are satisfied. The vanishing of $H^1(L / F, \ad r_{\pi, \iota}(1))$ follows from purity and \cite[Lemma 6.2]{kisin2003}. The genericity also follows from purity. 
\end{proof}
\begin{corollary}\label{cor_selmer_vanishing_under_polarizability}
    Let $F$ be a CM field, let $\pi$ be a cuspidal, polarizable, regular algebraic automorphic representation of $\GL_n(\mathbb{A}_F)$, let $p$ be a prime, and let $\iota : \overline{\mathbb{Q}}_p \to \mathbb{C}$ be an isomorphism. Suppose that $r_{\pi, \iota}(G_{F(\zeta_{p^\infty})})$ is enormous. Then $H^1_f(F, \ad r_{\pi, \iota}) = 0$.
\end{corollary}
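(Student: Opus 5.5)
The plan is to deduce this from Corollary \ref{cor_selmer_vanishing_under_purity}. Enormousness is assumed, so the only thing to check is the purity hypothesis (2): for every finite place $v$ of $F$, the Weil--Deligne representation $\mathrm{WD}(r_{\pi, \iota}|_{G_{F_v}})$ is pure in the sense of \cite[\S 1]{Tay07}.

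\emph{Step 1: local-global compatibility.} Since $\pi$ is polarizable, $r_{\pi,\iota}$ arises, after a soluble base change if needed, from the cohomology of unitary Shimura varieties. I would invoke the full local-global compatibility statement available in this setting (due to Caraiani, together with Barnet-Lamb--Gee--Geraghty--Taylor \cite{Bar14}). It gives
\[ \mathrm{WD}(r_{\pi,\iota}|_{G_{F_v}})^{F-ss} \cong \rec_{F_v}^T(\iota^{-1}\pi_v) \]
at every finite place $v$, including $v \mid p$, where one applies Fontaine's functor to the de Rham representation.

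\emph{Step 2: purity.} Caraiani's results (for $v \nmid p$ and $v \mid p$) show that $\mathrm{WD}(r_{\pi,\iota}|_{G_{F_v}})$ is pure. This follows from the Ramanujan-type statement for $\pi_v$ when $\pi$ is polarizable and cohomological: $\pi_v$ is essentially tempered, so its $L$-parameter is pure of the weight forced by the polarization. Purity is insensitive to Frobenius-semisimplification and to twisting by a character of the appropriate weight. So if $\pi$ is only polarizable up to a character, as in \cite{Bar14}, I would first twist to reduce to the conjugate self-dual case; this affects neither enormousness nor the statement, since $\ad$ is unchanged by twists.

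\emph{Step 3: conclusion.} With purity established, Corollary \ref{cor_selmer_vanishing_under_purity} yields $H^1_f(F,\ad r_{\pi,\iota})=0$.

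I expect the main obstacle to be purely referential: pinning down a citation that gives purity at all finite places, and in particular at $p$-adic places, for polarizable $\pi$ over a general CM field, possibly only up to twist. If purity at $v \mid p$ is not directly available in the literature, the fallback is to verify the two hypotheses of Theorem \ref{thm_general_vanishing_result} directly rather than going through Corollary \ref{cor_selmer_vanishing_under_purity}. Genericity at $v\mid p$ follows from temperedness of $\pi_v$ combined with local-global compatibility at $p$. The vanishing of $H^1(L/F,\ad r_{\pi,\iota}(1))$ needs purity only at a set of unramified places of density one, where it is classical, and can then be argued as in \cite[Lemma 6.2]{kisin2003}.
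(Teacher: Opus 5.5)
Your proposal is correct and matches the paper's proof, which deduces the statement directly from Corollary \ref{cor_selmer_vanishing_under_purity}. The paper cites \cite[Theorem 2.1.1]{Bar14} and the works referenced there, notably Caraiani's, for purity of $\mathrm{WD}(r_{\pi,\iota}|_{G_{F_v}})$ at every finite place; your twisting step and fallback are therefore unnecessary, since those results already cover polarizable $\pi$ and places $v \mid p$.
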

We note that this result is strictly stronger than \cite[Theorem A]{New23}, because we consider the whole adjoint representation of $\GL_n$ (as opposed to just the conjugate self-dual part). 
\begin{proof}
    This follows from Corollary \ref{cor_selmer_vanishing_under_purity} because in this case, $r_{\pi, \iota}$ is known to be pure (cf. \cite[Theorem 2.1.1]{Bar14} and the other papers cited there).
\end{proof}
\begin{corollary}\label{cor_selmer_vanishing_for_elliptic_curves}
    Let $F$ be a CM field, let $E / F$ be an elliptic curve with $\End(E_{\overline{F}}) = \mathbb{Z}$, and let $p$ be a prime. Then for any $n \geq 1$, we have $H^1_f(F, \ad \Sym^n V_p E) = 0$.
\end{corollary}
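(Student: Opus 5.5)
The plan is to deduce Corollary \ref{cor_selmer_vanishing_for_elliptic_curves} from Corollary \ref{cor_selmer_vanishing_under_purity}. That requires realising $\Sym^n V_p E$ as $r_{\pi,\iota}$ for a cuspidal regular algebraic $\pi$, possibly after a finite base change, and checking enormousness and purity. Since $H^1_f$ only grows under restriction to a finite extension (restriction is injective with characteristic $0$ coefficients), we may replace $F$ by any finite CM extension $F'$. By potential automorphy of symmetric powers \cite{10author}, there is a finite CM extension $F'/F$, which we may take linearly disjoint from the field cut out by $\ad \Sym^n \rho_{E,p}$ over $F(\zeta_{p^\infty})$, such that $\Sym^n \rho_{E,p}|_{G_{F'}} \cong r_{\pi,\iota}$ for some regular algebraic $\pi$ of $\GL_{n+1}(\mathbb{A}_{F'})$. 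Here we write $\rho_{E,p}$ for the representation on $V_pE$, choose any $\iota$, and note that the rank is $n+1$.

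Cuspidality of $\pi$ should follow from irreducibility of $r_{\pi,\iota}$. Because $\End(E_{\overline F}) = \mathbb{Z}$, Serre's open image theorem shows that $\rho_{E,p}(G_{K})$ is open in $\GL_2(\mathbb{Z}_p)$ for every finite $K/F$. Consequently the image of $G_{F'(\zeta_{p^\infty})}$ contains an open subgroup of $\mathrm{SL}_2(\mathbb{Z}_p)$, so $\Sym^n$ of it is irreducible and, by Zariski density, $\Sym^n$ restricted to it remains absolutely irreducible.

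Purity comes from $E$ itself. $\mathrm{WD}(\rho_{E,p}|_{G_{F_v}})$ is pure of weight $1$ at every finite $v$: at good reduction this is Weil's theorem, and at bad reduction one checks it case by case (potentially good or multiplicative), using the monodromy filtration. Purity is preserved by tensor products and by passage to direct summands, so $\Sym^n$ is pure of weight $n$. Restriction to $G_{F'_w}$ also preserves purity.

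The step I expect to be the real obstacle is enormousness of $\Sym^n \rho_{E,p}(G_{F'(\zeta_{p^\infty})})$. Its image $H$ contains $\Sym^n$ of an open subgroup $U \le \mathrm{SL}_2(\mathbb{Z}_p)$. Over $E$, the adjoint representation of $\Sym^n$ decomposes under $\mathrm{SL}_2$ as $\bigoplus_{k=0}^{n} \Sym^{2k}$, which are irreducible and pairwise distinct. By Zariski density of $U$, these are the simple $E[H]$-submodules. I then take $h = \Sym^n(\mathrm{diag}(t,t^{-1}))$ with $t \in 1+p^N\mathbb{Z}_p$ non-torsion. Its eigenvalues $t^{n-2i}$ are distinct and lie in $\mathbb{Z}_p$. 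The algebra $E[h]$ is then the full diagonal torus algebra in $M_{n+1}(E)$, and its trace pairing with $\Sym^{2k}$ is non-zero because each $\Sym^{2k}$ has a non-zero weight-$0$ vector, which is diagonal with non-zero trace against suitable diagonal matrices. This is essentially \cite[Lemma 2.33]{New23}-style reasoning, and with it Corollary \ref{cor_selmer_vanishing_under_purity} applies over $F'$, proving the claim.
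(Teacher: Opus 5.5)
Your proposal is correct and follows the same route as the paper: potential automorphy of $\Sym^n V_pE$ from \cite{10author}, injectivity of restriction on $H^1_f$ with characteristic $0$ coefficients, Serre's open image theorem for enormousness, and purity of $V_pE$ carried over to $\Sym^n$, all fed into Corollary \ref{cor_selmer_vanishing_under_purity}. The only differences are that you check enormousness by hand (Clebsch--Gordan decomposition of $\ad \Sym^n$ plus a regular torus element), where the paper cites \cite[Lemma 2.28]{New23}, and that your requirement that $F'$ be linearly disjoint from the field cut out by $\ad \Sym^n \rho_{E,p}$ is unnecessary, since Serre's theorem already gives open image over every finite extension.
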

\begin{proof}
    By \cite[Corollary 7.1.12]{10author}, $\Sym^n V_p E$ is potentially automorphic. Since we are working in characteristic 0, restriction is injective on Selmer groups, so we can assume that $\Sym^n V_p E$ is automorphic. It follows from Serre's open image theorem and \cite[Lemma 2.28]{New23} that the enormous image condition of Corollary \ref{cor_selmer_vanishing_under_purity} is satisfied. The Galois representations arising from elliptic curves are likewise known to be pure, so the result follows. 
\end{proof}

\bibliographystyle{alpha}
\bibliography{refs}
\end{document}